\RequirePackage{fix-cm}
\documentclass[smallextended]{svjour3_no_journal_info}       
\smartqed  
\usepackage[letterpaper,top=3cm,bottom=3cm,left=3cm,right=3cm,marginparwidth=1.75cm]{geometry}
\usepackage{lipsum}
\usepackage{amsfonts}
\usepackage{graphicx,wrapfig}
\usepackage{epstopdf}
\usepackage{algorithmic}
\ifpdf
  \DeclareGraphicsExtensions{.eps,.pdf,.png,.jpg}
\else
  \DeclareGraphicsExtensions{.eps}
\fi

\usepackage[utf8]{inputenc} 
\usepackage[T1]{fontenc}    
\usepackage{url}            
\usepackage{booktabs}       
\usepackage{amsfonts}       
\usepackage{nicefrac}       
\usepackage{microtype}
\usepackage{mathtools}
\usepackage{natbib}
\usepackage{graphicx}
\usepackage{subcaption}
\usepackage{amsmath,amssymb,amsfonts,amsxtra,bm}
\usepackage{enumerate}
\usepackage{enumitem}
\usepackage{hyperref}       
\usepackage{algorithm,algorithmic}
\usepackage{booktabs, 
            makecell, multirow, tabularx} 

\clearpage{}%

\PassOptionsToPackage{numbers, compress}{natbib}

\usepackage{mysymbolmp}
\usepackage{myacronym}
\usepackage{mypackagemp}

\newcommand{\linktoPwop}{\hyperref[problem]{\textcolor{black}{\ensuremath{\ccalP}}}}
\newcommand{\linktoBPwop}{\hyperref[problem:residual-learning]{\textcolor{black}{\ensuremath{\mathcal{BP}}}}}
\newcommand{\linktoIPwop}{\hyperref[problem:implicit-regularization]{\textcolor{black}{\ensuremath{\mathcal{IP}}}}}
\newcommand{\linktoIBPwop}{\hyperref[problem:IBP]{\textcolor{black}{\ensuremath{\mathcal{IBP}}}}}

\newcommand{\AnalogSGD}{\texttt{Analog\allowbreak\;SGD}}
\newcommand{\DigitalSGD}{\texttt{Digital\allowbreak\;SGD}}

\newcommand{\ResidualLearning}{\texttt{Residual\allowbreak\;Learning}}

\newtheorem{assumption}{\hspace{0pt}\bf Assumption}

\newcommand{\FullTitle}{
Optimization under Persistent State-Dependent Bias: Gradient-based Method and Complexity Analysis}

\title{
Optimization under Persistent State-Dependent Bias: Gradient-based Method and Complexity Analysis
}

\titlerunning{
Optimization under Persistent State-Dependent Bias: Gradient-based Method and Complexity Analysis
}        

\author{
        Zhaoxian Wu \and
        Quan Xiao \and
        Tayfun Gokmen \and
        Tianyi Chen 
}

\authorrunning{
    Zhaoxian Wu, Quan Xiao, Tayfun Gokmen, Tianyi Chen
    } 

\institute{
    Zhaoxian Wu \and Quan Xiao \and Tianyi Chen, 
    Department of Electrical and Computer Engineering, Cornell University \\
    Tayfun Gokmen, 
    IBM T. J. Watson Research Center\\
    \email{zw868@cornell.edu; qx232@cornell.edu; tgokmen@us.ibm.com; tianyi.chen@cornell.edu} \\
    This submission is an extended version of the conference paper \cite{wu2025analog}.
}
\date{}

\begin{document}

\maketitle
\begin{abstract}
    This paper studies the convergence of stochastic gradient descent (SGD) when the implemented updates are subject to a persistent and state-dependent bias, in which the desired update is scaled by response functions component-wise. Our first contribution is to demonstrate that SGD in this setting implicitly optimizes a penalized problem whose minimizer does not coincide with the true minimizer. To mitigate this convergence failure, we reformulate the original task as an equivalent bilevel optimization problem and propose a gradient-based algorithm, termed \ResidualLearning. Theoretical analysis shows that {\ResidualLearning} finds a solution to the original, unbiased optimization problem despite the hardware imperfections. Beyond exact convergence, we quantify how the response functions affect convergence complexity via the hardware condition number and show that a polynomial dependence on it is unavoidable in general, via a construction of a hard instance. The theoretical results are supported by numerical simulations that demonstrate the effectiveness of the proposed algorithm. 

\keywords{Stochastic optimization; bilevel optimization; approximate computing; in-situ optimization}
\subclass{
    {90C26 \and 90C15 \and 90C06 \and 90C60 \and 49M37 \and 68Q25}
}
\end{abstract}

\section{Introduction}
\label{section:introduction}

This paper studies stochastic first-order methods under persistent and state-dependent update bias.
A canonical problem in training machine learning models is the unconstrained stochastic optimization task with a differentiable objective $f(\,\cdot\,):\reals^D\to\reals$ over $W\in\reals^D$, given by
\begin{align}
    \label{problem}
    (\ccalP):~~
    W^* = \argmin_{W\in\reals^{D}}~ f(W) := \mbE_{\xi}[f(W; \xi)]
\end{align}
where $\xi$ is a random sample. 
Based on Boolean arithmetic and discrete gate manipulation, the "workhorse" algorithm, stochastic gradient descent (SGD) \citep{robbins1951stochastic}, is currently implemented on digital accelerators such as CPUs and GPUs. 
At each iteration $k$, given a stochastic gradient $\nabla f(W_k; \xi_k)$, SGD on digital hardware (\DigitalSGD) updates $W_k$ via
\begin{align}
    \label{recursion:digital-SGD}
    \texttt{Digital~SGD}\qquad
    W_{k+1} = W_k - \alpha \nabla f(W_k; \xi_k)
\end{align}
where $\alpha$ is a (decaying) stepsize, e.g., {$\alpha=\Theta(\log K/K)$} where $K$ is the iteration budget.
Since the intermediate results are represented by discrete states, {\DigitalSGD} follows the dynamics \eqref{recursion:digital-SGD} up to an error that is proportional to machine precision.
By implicitly assuming sufficiently high precision, the update \eqref{recursion:digital-SGD} corresponds to an ideal digital computing model where hardware-induced errors are negligible.
For a strongly convex and smooth objective $f(\,\cdot\,)$, {\DigitalSGD} ensures that the iterates can converge to the optimum $f(W_K)\rightarrow f^*$ when iteration $K\rightarrow+\infty$, where $f^*$ is the optimal value of {\linktoPwop} \citep{hazan2014beyond,karimi2016linear}.

\begin{wrapfigure}[20]{r}{0.36\linewidth}
    \vspace{-1.9em}
    \centering
    \includegraphics[width=\linewidth]{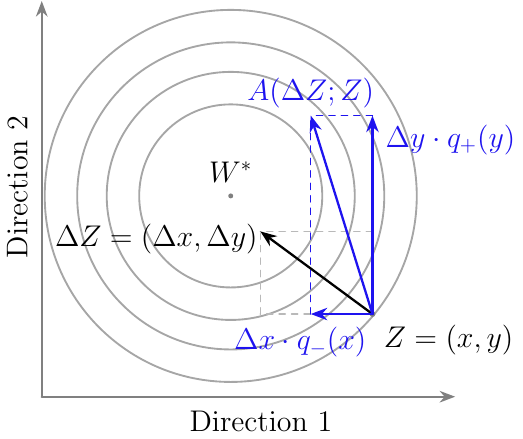}
    \vspace{-1.8em}
    \caption{Illustration of persistent state-dependent bias in $\reals^2$.
    Any desired update $\Delta Z$, which is typically a descent direction of the objective function, is scaled by state-dependent response functions $q_+(\cdotc)$ and $q_-(\cdotc)$ component-wise.
    The bias is exacerbated when the hardware condition number increases.}
    \label{fig:hardware-challenge}
    \vspace{-2.5em}
\end{wrapfigure}
This paper investigates a distinct computational model involving persistent bias that arises in in-situ optimization on analog-based physical computing systems. 
Let $\{Z_k : k\in\naturals\}$ be a variable sequence generated by \textit{any} optimization algorithm, and let $\Delta Z_k$ be the \textit{desired update} at the $k$-th iteration. 
On top of that, there are two \textit{unknown state-dependent response functions} $q_+(\cdotc), q_-(\cdotc) : \reals\to\reals$. 
For each coordinate $d$, the hardware may induce a different pair of scalar response functions $q_{+,d}(\cdotc)$ and $q_{-,d}(\cdotc)$. To keep the notation light, we suppress the coordinate index and write the update in component-wise form.
With these notations, the dynamics of $Z_k$ are given by $Z_{k+1} = Z_k + A(\Delta Z_k; Z_k)$
where the analog operator $A(\cdotc;\cdotc) : \reals^{D}\times\reals^D\to\reals^D$ is applied component-wise and defined as
\begin{align}
    \label{analog-update}
    A(\Delta z; z) := \begin{cases}
        \Delta z \cdot q_{+}(z),~~~\Delta z \ge 0, \\
        \Delta z \cdot q_{-}(z),~~~\Delta z < 0.
    \end{cases}
\end{align}
With 
$F(w) := \frac{1}{2}(q_{-}(w)+q_{+}(w))$ and $G(w) := \frac{1}{2}(q_{-}(w)-q_{+}(w))$,
the operator can be decomposed into $A(\Delta z; z) = \Delta z \odot F(z) -|\Delta z| \odot G(z)$, and hence
the dynamics of $Z_k$ can be rewritten as
\begin{align}
    \label{biased-update}
    \texttt{In-situ Update Dynamics}\qquad
    Z_{k+1} = Z_k + \Delta Z_k \odot F(Z_k) -|\Delta Z_k| \odot G(Z_k)
\end{align}
where 
$|\,\cdot\,|$ and $\odot$ represent the component-wise absolute value and multiplication, respectively. 
For example, replacing $Z_k$ with the optimization variable $W_k$, and replacing $\Delta Z_k$ with stochastic gradient, {\AnalogSGD} evolves following the dynamics
\begin{align}
    \label{recursion:analog-SGD}
    \texttt{Analog~SGD}\qquad
    W_{k+1} =&\ W_k - \alpha \nabla f(W_k; \xi_k)\odot F(W_k)
    - \alpha|\nabla f(W_k; \xi_k)|\odot G(W_k).
\end{align}
Compared with {\DigitalSGD}, the stochastic gradient $\nabla f(W_k; \xi_k)$ in \eqref{recursion:analog-SGD} is scaled by $F(\,\cdot\,)$, and an extra term that depends on the magnitude of $\nabla f(W_k; \xi_k)$ is introduced. 

\subsection{Motivated application: Analog-based physical computing}
\label{section:introduction-preliminary}
This computational model is motivated by analog computing for optimization problems, 
which arises from the gap between the computing requirements of large AI models and the limited hardware development. 
Modern hardware accelerators, such as GPUs and TPUs \citep{jouppi2023tpu}, are usually \textit{digital computing hardware}, in which frequent data movement is involved in computation.
To further improve computational efficiency, an analog computing paradigm has been proposed \citep{haensch2019next, Y2020sebastianNatNano}, wherein the optimization variables are represented by continuous physical quantities (e.g., conductance states of resistive elements).
This novel computing approach leverages Kirchhoff's and Ohm's laws to accelerate matrix-vector multiplication, a bottleneck in AI computing.
Without moving variables during optimization processes, analog computing hardware achieves 10$\times$-10,000$\times$ energy efficiency than GPU \citep{jain2019neural,cosemans2019towards,papistas202122} in the inference, and $1,000\times$ in the training \citep{wang2020ssm, huang2020overcoming, nandakumar2020}. 
Unlike digital hardware, analog hardware updates variables in-situ by applying electrical pulses to resistive elements. Since the resistive element responds to positive and negative pulses differently, the desired update in $\Delta z$ is scaled by $q_+(\cdotc)$ or $q_-(\cdotc)$ depending on its sign.
Empirical evidence shows that the response is state-dependent and asymmetric, which can be captured by the dynamics \eqref{analog-update} \citep{burr2015experimental, chen2015mitigating, wu2024towards}.

\subsection{Main results}
\label{section:main-results}
This paper aims to establish a rigorous optimization foundation under the assumption that all variables follow the dynamics \eqref{biased-update}. 
A primary challenge is the \textit{asymmetric update}.
That is, for most of $z$, the response to positive and negative stimuli is different, i.e., $q_+(z) \neq q_-(z)$. Therefore, intended positive and negative updates of equal magnitude produce unequal changes in the variable.  
Specifically, a point $w^\diamond$ satisfies 
$q_+(w^\diamond)=q_-(w^\diamond)$,
we say $w^\diamond$ is a \textit{symmetric point}.
Except for the symmetric point, the update is asymmetric elsewhere.
As a stepping stone, we first answer the following question:
\begin{center}
    \textbf{Q1)} 
    {\em What is the impact of the persistent state-dependent bias on the optimization? }
\end{center}
The study begins by investigating its impact on SGD.
We show that the stationary point of {\AnalogSGD} is not the minimizer of $f(\cdotc)$ but an approximate minimizer $\tdW^*$ of a penalized objective $f(W)+\varphi(W)$, where $\varphi(W)$ is an \textit{implicit penalty} that will be defined in Theorem \ref{theorem:implicit-regularization}.
In other words, instead of optimizing problem \linktoPwop, {\AnalogSGD} optimizes another penalized problem, called problem \linktoIPwop. We show that the penalty coefficient is proportional to the noise level of the stochastic gradients, and the minimizers of $\varphi(W)$ are the symmetric point $W^\diamond\in\reals^D$, which is obtained by replicating $w^\diamond$ $D$ times.
Since $W^\diamond\ne W^*$, $W_k$ can not converge to $W^*$, and $f(W_k)$ does not converge to $f^*$; see \S\ref{section:preliminary-response-function}.
It motivates us to ask the following question:
\begin{center}
    \textbf{Q2)} {\em How to reformulate problem {\linktoPwop} so that it becomes exactly solvable under state-dependent bias? }
\end{center}

\begin{wrapfigure}[14]{r}{0.45\linewidth}
    \vspace{-0.6cm}
    \includegraphics[width=1\linewidth]{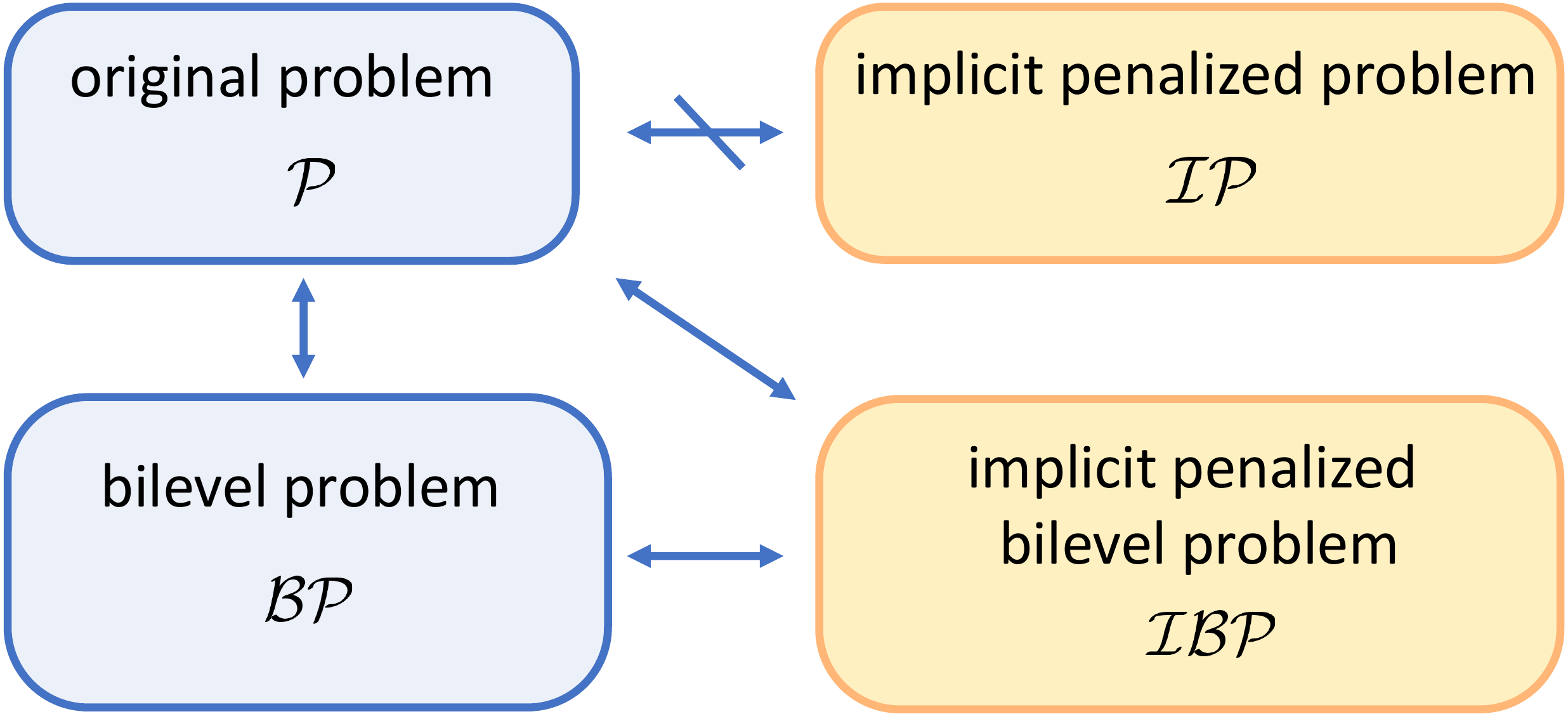}
    \vspace{-1.1em}
    \caption{Illustration of the relationship between four different problems. The blue boxes denote the target problem of interest, while the yellow boxes denote the penalized problem solved under persistent state-dependent bias.}
    \label{fig:four-problem}
    \vspace{-1.2em}
\end{wrapfigure}
To address inexact convergence, we formulate a bilevel optimization problem \linktoBPwop.
Our key idea is to reformulate the original problem {\linktoPwop} so that the implicit penalties are applied to an auxiliary residual objective whose minimizer aligns with the minimizer of the penalty $\varphi(\cdotc)$, while the resulting bilevel problem {\linktoBPwop} remains equivalent to {\linktoPwop}.
This reformulation enables the design of a hardware-implementable method, termed {\ResidualLearning}.
Similar to {\AnalogSGD}, {\ResidualLearning} also implicitly optimizes an implicit penalized bilevel problem \linktoIBPwop.
However, we show that problem {\linktoIBPwop} is equivalent to {\linktoBPwop}, and hence equivalent to {\linktoPwop}, as illustrated by Fig. \ref{fig:four-problem}. Consequently, despite the implicit penalty, {\ResidualLearning} converges exactly towards the solution of {\linktoPwop}; see \S\ref{section:TT}.

The equivalence result above qualitatively resolves the inexact convergence issue. 
From an optimization perspective, however, exactness alone does not determine whether the hardware-constrained problem is as easy to solve as its digital counterpart. It remains to understand whether the state-dependent bias introduces additional optimization difficulty. 
This raises the third natural question:
\begin{center}
    \textbf{Q3)} {\em How to quantify the optimization difficulty introduced by the state-dependent bias?}
\end{center}
To answer this question, we define the \textit{hardware condition number} $\kappa_2$ as the ratio between the maximum and minimum values of the response functions. 
This is analogous to the classical objective condition number $\kappa_1$, defined for a twice-differentiable objective as the ratio between the largest and smallest singular values of its Hessian \citep{nesterov2013introductory}.
Our analysis shows that {\ResidualLearning} preserves the optimal $\log K/K$ rate of the stochastic algorithm class up to logarithmic factors \citep{bottou2018optimization, agarwal2009information}.
The upper bound depends on $\kappa_1\kappa_2^4$, suggesting a substantially stronger sensitivity to the hardware condition number than to the objective condition number.
To assess whether this dependence is fundamental, we establish a lower bound of order $\kappa_2^2$ via a hard-instance construction, showing that a polynomial dependence on $\kappa_2$ is unavoidable.
Determining the sharp dependence on $\kappa_2$ and developing an optimal algorithm remain open problems; see Table \ref{table:mu-sc-classical-bounds} and \S\ref{section:convergence:RL}.

We verify the claims above empirically by simulating the training dynamics of {\AnalogSGD} and {\ResidualLearning} with various response functions. We visualize the impact of the implicit penalty and the hardware condition number. 
In addition to toy examples, we demonstrate that {\ResidualLearning} outperforms {\AnalogSGD} and achieves performance comparable to {\DigitalSGD} on real datasets and in an analog hardware simulation toolkit; see \S\ref{section:experiments}.

\begin{table}[t]
    \centering
    \renewcommand{\arraystretch}{1.25}
    \begin{tabular}{c|cl|cl}
        \toprule
        \textbf{Setting} &
        \multicolumn{2}{c|}{\texttt{Stochastic Gradient Descent} (Digital)} &
        \multicolumn{2}{c}{{\ResidualLearning} (Analog)}\\
        \midrule
        Upper Bound &
        $\displaystyle \ccalO\lp \kappa_1\cdot\frac{\sigma^2}{\mu K}\rp$ &
        [\citealp{bottou2018optimization}]
        &
        $\displaystyle \tilde\ccalO\lp \kappa_1 \kappa_2^4 \cdot \frac{\sigma^2}{\mu K}\rp$ &
        \textnormal{[Theorem \ref{theorem:TT-convergence-scvx}]} \\
        Lower Bound &
        $\displaystyle \Omega\lp \frac{\sigma^2}{\mu K}\rp$ &
        \textnormal{[{\citealp{agarwal2009information}}]} &
        $\displaystyle \tilde\Omega\!\left(
            \kappa_2^2\cdot \frac{\sigma^2}{\mu K}
        \right)$ &
        \textnormal{[Theorem \ref{theorem:lower-bound}]} \\
        \bottomrule
    \end{tabular}
    \caption{
        Comparison between the classical complexity benchmark for the last-iterate function gap in stochastic optimization in the digital setting and the corresponding complexity guarantee established in this paper for the last-iterate function gap in the hardware-constrained setting.
        $K$ is the iteration budget and $\sigma^2$ is the noise variance. $\kappa_1$ and $\kappa_2$ are the objective and hardware condition numbers, respectively, defined in \S\ref{section:convergence:RL}. $\tilde\ccalO$ and $\tilde\Omega$ hide logarithmic factors.
    }
    \label{table:mu-sc-classical-bounds}
\end{table}

\vskip 0.3\baselineskip
\noindent
\textbf{Notations.}
Let $W\in\reals^D$ be a vector.
$\|W\|$ and $\|W\|_\infty$ are the $\ell_2$- and $\ell_\infty$-norm of $W$, respectively.
$|W|$ is the component-wise absolute value. 
$\odot$ is the component-wise product of two vectors and $\frac{G}{F}$ is the component-wise division of two vectors $G, F\in\reals^D$. 
$[W]_d$ takes the $d$-th element of $W$.
Let $[D]:=\{1, 2, \cdots, D\}$ denote the index set.

\subsection{Literature review}
We review prior work from two aspects: 
optimization via biased gradient oracles
and {bilevel optimization}.

\paragraph{\bf Optimization via biased gradient oracles.}
{\DigitalSGD}, equipped with an unbiased gradient oracle, has become a standard method for solving the problem {\linktoPwop}.
However, errors in gradient calculations may appear deterministically in various contexts. For example, in bilevel optimization, the problem typically involves an inner-loop or approximation sequence to compute hyper-gradients \citep{arbel2021amortized,chen2021closing}; in distributed optimization, gradients can be compressed or quantized to reduce communication \citep{alistarh2017qsgd, magnusson2020maintaining, stich2018sparsified, wangni2018gradient, safaryan2022uncertainty}.
Without any special structure, general biased gradient oracles typically incur asymptotic error and lead to inexact convergence. For example, consider a general \textit{biased SGD} recursion $W_{k+1} = W_k - \alpha (\nabla f(W_k; \xi_k) + b(W_k))$ with the assumption that there exist constants $0\le A_1<1, A_2\ge 0$ such that $\|b(W)\|^2\le A_1 \|\nabla f(W)\|^2 + A_2$,
biased SGD \citep{ajalloeian2020convergence} has non-vanishing asymptotic error $\ccalO(A_2 / (1-A_1))$ if $A_2\ne 0$. 
In some applications, the bounds of the asymptotic error are not artifacts and are even tight; e.g, in the distributed computing system, the gradient estimation inevitably becomes biased if part of the workers become unreliable \citep{wu2020federated,karimireddybyzantine}.
A crucial insight in modern stochastic optimization, however, is that bias is not inherently incompatible with convergence. For example, in finite-sum optimization, several techniques can be incorporated into SGD to improve the convergence performance, such as without-replacement sampling orders \citep{haochen2019random, gurbuzbalaban2021random, huang2021improved} or variance reduction \citep{nguyen2017sarah, schmidt2017minimizing, cutkosky2019momentum}. In these methods, even though biased gradient oracles are employed, the gradient oracles are \textit{asymptotically unbiased}, i.e., the induced bias vanishes as the variables converge to the solution.
In general, \cite{khanh2024new} provides a condition under which biased SGD converges exactly. 
If asymptotic unbiasedness does not hold, one approach is to introduce auxiliary variables to mitigate bias during optimization, such as error feedback \citep{karimireddy2019error}.
Unlike existing work, which assumes ideal updates to auxiliary variables, this paper considers a more challenging setting in which any auxiliary variable suffers from the biased update \eqref{biased-update}.

\paragraph{\bf Gradient-based methods for bilevel optimization.} Bilevel optimization has a long history in optimization that dates back to \citep{bracken1973mathematical,ye1995optimality,vicente1994bilevel,colson2007overview}, where the upper-level objective depends on the optimal solution of the lower-level problem. Recent theoretical works have focused on developing efficient gradient-based bilevel methods with non-asymptotic convergence guarantees \citep{ghadimi2018approximation,ji2021bilevel,hong2020two,chen2021closing}. 
Since the gradient of the bilevel objective depends on the Hessian inverse of the lower-level objective, existing literature proposed different Hessian inversion approximation methods including unrolling differentiation \citep{franceschi2017forward,franceschi2018bilevel,grazzi2020iteration,shaban2019truncated}, implicit differentiation \citep{chen2021closing,ghadimi2018approximation,hong2020two,pedregosa2016hyperparameter,khanduri2021near,chen2022single}, conjugate gradient \citep{ji2021bilevel,yang2021provably} and its warm-started single-loop versions \citep{arbel2021amortized,li2022fully,liu2023averaged,xiao2023generalized,arbel2022nonconvex}. All of the above approximations are based on second-order information. 
In addition, researchers have proposed first-order methods, such as equilibrium backpropagation \citep{scellier2017equilibrium,scellier2021deep,zucchet2022beyond} and penalty approach \citep{shen2023penalty,liubome,kwon2023fully,kwon2023penalty,chen2024finding,lu2023first,jiang2024primal,yao2024overcoming,mehra2021penalty}. 
However, even when the lower-level problem is strongly convex, directly applying the above methods typically results in only stationary convergence, due to distortions caused by the nested structure. In fact, achieving global convergence in bilevel optimization is known to be {NP-hard} \citep{vicente1994descent}, and is only possible in finite time for those with linear or quadratic lower-level problems \citep{wang2021fast, wang2022solving, jeyakumar2016convergent, xiao2024unlocking}.
Unlike the aforementioned work, this paper considers a special bilevel problem {\linktoBPwop} and shows that this problem is solvable despite the persistent state-dependent bias.

\section{Gradient Methods with Persistent State-Dependent Bias and Implicit Penalization}
\label{section:preliminary-response-function}
This section first examines how various forms of state-dependent bias, mathematically characterized by the response functions, affect the update and proposes the discrete-time dynamics of analog training. 
After that, this section introduces a class of response functions that will be considered in our framework. 
Built on that, this section shows that {\AnalogSGD} in \eqref{biased-update} cannot, in general, converge to a critical point of the original problem $\ccalP$ in the presence of the state-dependent bias.

\textbf{Regular condition of response functions.}
Generally, the term $G(W)$ in \eqref{biased-update} is nonzero since $q_+(w)\neq q_-(w)$ for almost all $w$. The equality can hold only at the symmetry point $w^\diamond$ (cf. Fig.~\ref{fig:response-factor}).
Let $W^\diamond \in \reals^D$ denote the vector obtained by stacking the scalar symmetry points $w^\diamond$. Then $G(W^\diamond)=0$, and $G(W)$ remains small for $W$ in a neighborhood of $W^\diamond$; consequently, the resulting dynamics are locally close to those of {\DigitalSGD}.

We next impose mild regularity assumptions on the device response functions.
First, a positive pulse increases the conductance and a negative pulse decreases it; accordingly, we assume the response functions are strictly positive, i.e., $q_+(w)>0$ and $q_-(w)>0$ for all $w$.
Second, we assume the responses are uniformly bounded: there exist constants $0<q_{\min}\le q_{\max}<\infty$ such that
$q_{\min}\le q_+(w)\le q_{\max}$ and $q_{\min}\le q_-(w)\le q_{\max}$ for all $w$,
which models operation away from conductance saturation during training \citep{wu2024towards}.
Finally, for tractability, we assume $q_+(\cdotc)$ and $q_-(\cdotc)$ are differentiable and Lipschitz continuous.

\begin{figure*}[t]
    \centering
    \includegraphics[width=0.8\linewidth]{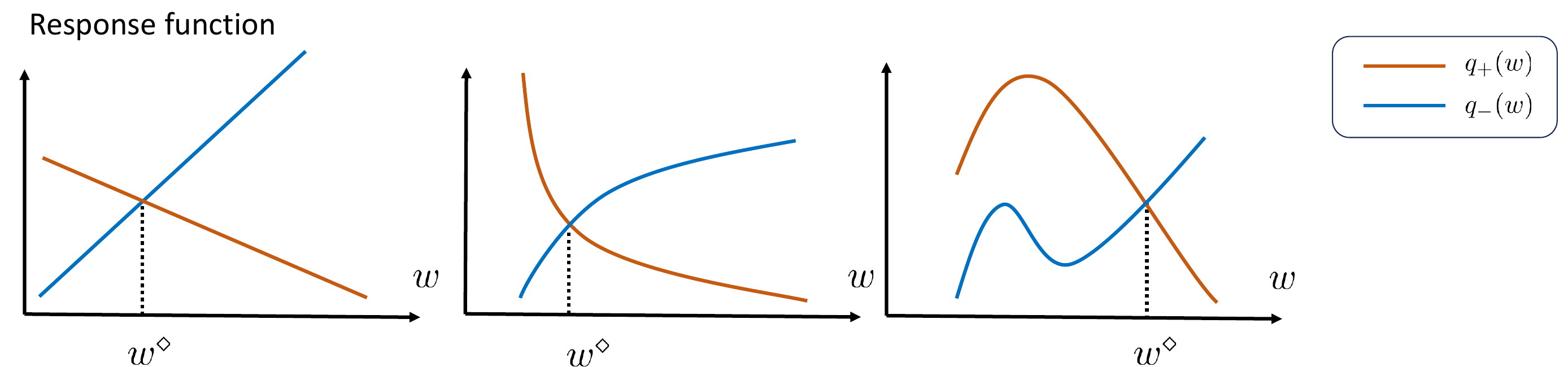}
    \vspace{-0.2cm}
    \caption{Examples of response functions from Assumption \ref{assumption:response-factor}; $w^\diamond$ is a symmetric point.
    }
    \label{fig:response-factor}
    \vspace{-0.4cm}
\end{figure*}

\begin{definition}[Regularity of response functions]
    \label{assumption:response-factor}
    Response functions $q_+(\,\cdot\,)$ and $q_-(\,\cdot\,)$ are training-friendly if they satisfy
    \begin{itemize}[leftmargin=2em, itemsep=0.8em]
        \item \textbf{(Positive-definiteness)} There exist positive constants $q_{\min}>0$ and $q_{\max}>0$ such that
        $q_{\min} \le q_+(w) \le q_{\max}$ and $q_{\min} \le q_-(w) \le q_{\max}, \forall w$; and,
        \item \textbf{(Differentiable)} The response functions $q_+(\,\cdot\,)$ and $q_-(\,\cdot\,)$ are differentiable.
        \item \textbf{(Lipschitz continuous)} The response functions $q_+(\,\cdot\,)$ and $q_-(\,\cdot\,)$ are $L_S$-Lipschitz continuous.
    \end{itemize}
\end{definition}
Under the regularity conditions, $F(\,\cdot\,)$ is always positive, which ensures that the update is non-zero when the gradient is non-zero. Furthermore, the differentiability ensures the $F(\cdotc)$ and $G(\,\cdot\,)$ are continuous, differential, and integrable.
Definition \ref{assumption:response-factor} covers a wide range of response functions in emerging hardware, including but not limited to PCM \citep{Burr2016,2020legalloJPD}, ReRAM \citep{Jang2014, Jang2015, stecconi2024analog}, ECRAM \citep{tang2018ecram, onen2022nanosecond}.
Fig. \ref{fig:response-factor} showcases three examples from the response functions class, including linear, non-linear but monotonic, and even non-monotonic functions.
\textbf{Warm-up: Weight drift on the optimal solution.}
Now we examine the conditional mean dynamics of {\AnalogSGD} and {\DigitalSGD}, which reveal a systematic \emph{weight drift} caused by the bias term in {\AnalogSGD}.
This provides intuition and motivates the implicit-penalty characterization established later in this section.
{\DigitalSGD} is stable at a critical point in expectation. Specifically, if
$\mbE_{\xi}[\nabla f(W_k;\xi)]=0$, then
$\mbE_{\xi_k}[W_{k+1}]
    = W_k - \alpha\,\mbE_{\xi_k}\!\left[\nabla f(W_k;\xi_k)\right]
    = W_k$,
where $\mbE_{\xi_k}[\cdotc]$ denotes expectation over $\xi_k$ conditional on $W_k$.
In contrast, it holds for {\AnalogSGD} that
\begin{align}
\label{equality:analog-SGD-drift}
    \mbE_{\xi_k}[W_{k+1}]
    &= W_k - \alpha\,\mbE_{\xi_k}\!\left[\nabla f(W_k;\xi_k)\right]\odot F(W_k)
       - \alpha\,\mbE_{\xi_k}\!\left[|\nabla f(W_k;\xi_k)|\right]\odot G(W_k) \\
    &= W_k - \alpha\,\mbE_{\xi_k}\!\left[|\nabla f(W_k;\xi_k)|\right]\odot G(W_k),
    \nonumber
\end{align}
and the right-hand side is generally not equal to $W_k$ whenever $G(W_k)\neq 0$. 

To illustrate the effect, consider the scalar case ($D=1$) and assume $G(\cdotc)$ is strictly monotone with
$G(w^\diamond)=0$, $G(w)\ge 0$ for $w\ge w^\diamond$, and $G(w)\le 0$ for $w\le w^\diamond$.
Then \eqref{equality:analog-SGD-drift} implies
$\mbE_{\xi_k}[w_{k+1}]\le w_k$ when $w_k\ge w^\diamond$ and
$\mbE_{\xi_k}[w_{k+1}]\ge w_k$ when $w_k\le w^\diamond$,
i.e., the conditional mean iterate drifts toward the symmetry point $w^\diamond$.
Unless the minimizer coincides with the symmetry point, this drift pushes the iterates away from the minimizer $W^\star$; consequently, {\AnalogSGD} does not, in general, optimize the original objective $f(\cdotc)$.

\textbf{Implicit penalty of {\AnalogSGD}.}
The weight drift identified above suggests that {\AnalogSGD} does not optimize $f(\cdotc)$ itself, but a \emph{penalized} objective that draws the iterate toward the symmetry point $W^\diamond$. 
To achieve the formulation of the penalized term, we construct a function $R(\cdotc)$ whose "derivative" is proportional to the bias term $\mbE_{\xi_k}\left[|\nabla f(W_k;\xi_k)|\right]\odot G(W_k)$. The following definition formalizes the construction and the resulting penalized objective.
\begin{definition}[Asymmetry ratio and penalized term]
\label{def:implicit-penalty}
Define the \emph{asymmetry ratio} $R(\,\cdot\,):\reals^D\to\reals^D$ and its component-wise anti-derivative $R_c(\,\cdot\,):\reals^D\to\reals^D$ by
\begin{align}
    R(W) := \frac{G(W)}{F(W)}
    \quad\text{and}\quad
    [R_c(W)]_d := \int_{[W^\diamond]_d}^{[W]_d} [R(W')]_d \,\diff{[W']_d},
    \quad d\in[D],
\end{align}
where the division is component-wise. Let $\Sigma := \mbE_\xi[|\nabla f(W^*; \xi)|]\in\reals^D$ denote the component-wise first moment of the stochastic gradient at the minimizer $W^*$. Define the \emph{penalty term} as $\varphi(W) := \la \Sigma, R_c(W)\ra$.
\end{definition}

The anti-derivative satisfies $\frac{\diff{}}{\diff{[W]_d}}[R_c(W)]_d = [R(W)]_d$ for each $d\in[D]$, so that $\nabla \la C, R_c(W)\ra = C \odot R(W)$ for any $C\in\reals^D$ and, in particular, 
$\nabla \varphi(W) = \Sigma\odot R(W)$, which equals the bias term divided by $F(W^*)$ in \eqref{equality:analog-SGD-drift} at $W=W^*$.
The penalty $R_c$ vanishes at $W^\diamond$, where $R(W^\diamond)=0$; if $R(\,\cdot\,)$ is moreover strictly monotone, then $R_c(W)\ge 0$ and attains its minimum at $W^\diamond$. 
Hence, the penalty shifts the minimizer of $f_\Sigma$ from $W^*$ toward $W^\diamond$, with strength set by the noise scale $\Sigma$.
To certify that {\AnalogSGD} optimizes \linktoIPwop, we do not characterize its exact limit; instead, we exhibit a single point that is simultaneously (i) nearly critical for the penalized objective $f(W)+\varphi(W)$ and (ii) nearly stationary for the mean {\AnalogSGD} dynamics. These two properties are measured, respectively, by the gradient norm $\|\nabla f_{\Sigma}(\cdotc)\|$ and by the \emph{scaled effective update}
\begin{align}\label{equality:scale_analog-SGD-drift}
    T(W) :=
    \mbE_\xi[\nabla f(W; \xi)] + \mbE_\xi[|\nabla f(W; \xi)|] \odot \frac{G(W)}{F(W)},
\end{align}
obtained by dividing the mean increment in \eqref{equality:analog-SGD-drift} by the positive factor $F(\cdotc)$. The zeros of $T(\cdotc)$ are exactly the conditional-expectation fixed points of {\AnalogSGD}: if $T(W_k)=0$, then the recursion \eqref{recursion:analog-SGD} (equivalently, \eqref{biased-update} with $\Delta Z_k=-\alpha\nabla f(W_k;\xi_k)$) is stable at $W_k$, i.e., $\mbE_{\xi_k}[W_{k+1}]=W_k$.
The two measures agree to leading order near $W^*$, since $\mbE_\xi[|\nabla f(W;\xi)|]\to\Sigma$ and hence $T(W)\to\nabla f_\Sigma(W)$ as $W\to W^*$.
The following theorem makes the claim precise: it produces a point at which both measures are an order smaller, in $\|W^\diamond-W^*\|$, than at either $W^*$ or $W^\diamond$, so that {\AnalogSGD} implicitly optimizes \linktoIPwop.

\begin{restatable}[Implicit Penalty]{theorem}{ThmImplicitRegularization}
    \label{theorem:implicit-regularization}
    Consider an open convex set $\ccalS$ such that $W^*, W^\diamond\in\ccalS$.
    Assume in the set $\ccalS$, $R(\cdotc)$ is continuously differentiable with a locally Lipschitz Jacobian, $f(\cdotc)$ is twice differentiable with a locally Lipschitz Hessian, and $\nabla f(\cdotc; \xi)$ is Lipschitz continuous with a common Lipschitz constant for all $\xi$.
    Let $J_R(W^\diamond)\in\reals^{D\times D}$ denote the Jacobian of $R(\cdotc)$ at $W^\diamond$ (since $R(\cdotc)$ acts component-wise,  $J_R(W^\diamond)$ is diagonal).
    Assume the Hessian satisfies $\mu^\prime I\preceq \nabla^2 f(W)\preceq L^\prime I$ in $\ccalS$ with positive $\mu^\prime$ and $L^\prime$, and there exists a positive constant $c_0$ such that $q'_+(W)<-c_0, q'_-(W)>c_0$ for all $W\in\ccalS$.
    Assume further that all components of $\Sigma$ are positive.
    The matrix
    $H := \nabla^2 f(W^*)+\Diag(\Sigma)\,J_R(W^\diamond)$
    is non-singular. Define
    \begin{align}
    \label{problem:implicit-regularization-solution}
    \tdW^*
    := H^{-1}\Big(\nabla^2 f(W^*)\,W^*+\Diag(\Sigma)\,J_R(W^\diamond)\,W^\diamond\Big)
    \end{align}
    where the operator \(\Diag(\Sigma) \in \mathbb{R}^{D \times D}\) denotes the diagonal matrix whose diagonal entries are given by $\Sigma \in \mathbb{R}^D$.
    Then {\AnalogSGD} implicitly optimizes the penalized problem
    \begin{align}
        \label{problem:implicit-regularization}
        (\mathcal{IP}):\quad
        \min_{W\in\reals^D}~ f_{\Sigma}(W) = f(W) + \la \Sigma, R_c(W)\ra,
    \end{align}
    with $R_c$ and $\Sigma$ as in Definition \ref{def:implicit-penalty}, in the sense that there exist constants $c_1, c_2>0$ such that
    \begin{align}
        \|\nabla f_{\Sigma}(\tdW^*)\| \le&\ c_1 \|W^\diamond-W^*\|^2, &
        \|\nabla f_{\Sigma}(W^*)\| \ge&\  c_2\|W^\diamond-W^*\|, &
        \|\nabla f_{\Sigma}(W^\diamond)\| \ge&\  c_2\|W^\diamond-W^*\|, 
        \nonumber\\
        \|T(\tdW^*)\| \le&\ c_1 \|W^\diamond-W^*\|^2,&
        \|T(W^*)\| \ge&\  c_2\|W^\diamond-W^*\|, &
        \|T(W^\diamond)\| \ge&\  c_2\|W^\diamond-W^*\|.
        \nonumber
    \end{align}
\end{restatable}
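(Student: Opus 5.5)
The plan is to linearize both $\nabla f_\Sigma$ and $T$ around the two reference points and to compare them with the affine map $\ell(W) := \nabla^2 f(W^*)(W-W^*) + \Diag(\Sigma)J_R(W^\diamond)(W-W^\diamond)$. Write $\delta := \|W^\diamond-W^*\|$. By construction, $\tdW^*$ is the unique zero of $\ell$, since $H\tdW^* = \nabla^2 f(W^*)W^* + \Diag(\Sigma)J_R(W^\diamond)W^\diamond$. Moreover $\tdW^*$ is an "$H^{-1}$-weighted combination" of $W^*$ and $W^\diamond$: $\tdW^* - W^* = H^{-1}\Diag(\Sigma)J_R(W^\diamond)(W^\diamond-W^*)$, so $\|\tdW^*-W^*\|\le C\delta$ and $\|\tdW^*-W^\diamond\|\le C\delta$. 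All three points therefore lie within $O(\delta)$ of each other. By convexity of $\ccalS$, the segments between them stay in $\ccalS$ (note $\tdW^*\in\ccalS$ must be checked or assumed; I would argue it lies in the convex hull-type region, or else restrict to $\delta$ small and take $\ccalS$ containing a neighborhood).

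\textbf{Step 1: expansion of $\nabla f_\Sigma$.} We have $\nabla f_\Sigma(W) = \nabla f(W) + \Sigma\odot R(W)$. Taylor's theorem with the Lipschitz Hessian gives $\nabla f(W) = \nabla^2 f(W^*)(W-W^*) + O(\|W-W^*\|^2)$, using $\nabla f(W^*)=0$. Since $R(W^\diamond)=0$ and $J_R$ is locally Lipschitz, $R(W) = J_R(W^\diamond)(W-W^\diamond)+O(\|W-W^\diamond\|^2)$. Hence $\nabla f_\Sigma(W) = \ell(W) + O(\delta^2)$ for $W$ within $O(\delta)$ of both points. Evaluating at $\tdW^*$ gives the bound $c_1\delta^2$.

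\textbf{Step 2: lower bounds.} At $W^*$ we have $\ell(W^*) = \Diag(\Sigma)J_R(W^\diamond)(W^*-W^\diamond)$. The sign assumptions on $q'_\pm$ give $G' = \frac12(q'_- - q'_+) > c_0$. Since $R = G/F$ and $G(W^\diamond)=0$, we get $R'(w^\diamond) = G'(w^\diamond)/F(w^\diamond) \ge c_0/q_{\max}$. Together with $\Sigma>0$, every diagonal entry is bounded below, so $\|\ell(W^*)\|\ge c\,\delta$. At $W^\diamond$ we have $\ell(W^\diamond) = \nabla^2 f(W^*)(W^\diamond-W^*)$, whose norm is at least $\mu'\delta$. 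Subtracting the $O(\delta^2)$ remainders yields $\ge c_2\delta$. This is valid for $\delta$ small. For large $\delta$ I would instead argue directly by monotonicity: $\nabla f_\Sigma(W^\diamond)=\nabla f(W^\diamond)$, which has norm $\ge\mu'\delta$ by strong convexity. Likewise $\nabla f_\Sigma(W^*) = \Sigma\odot R(W^*)$, and each component satisfies $|R(w)|\ge (c_0/q_{\max})|w-w^\diamond|$ via the mean value theorem applied to $G$. These are global bounds, which is cleaner, and I would use them.

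\textbf{Step 3: transfer to $T$.} We have $T(W)-\nabla f_\Sigma(W) = (\mbE_\xi|\nabla f(W;\xi)| - \Sigma)\odot R(W)$. The common Lipschitz constant $L$ of $\nabla f(\cdot;\xi)$ gives $\big|\,\mbE|\nabla f(W;\xi)|-\mbE|\nabla f(W^*;\xi)|\,\big|\le L\|W-W^*\|$ componentwise. Also $|R(W)|\le C\|W-W^\diamond\|$ by Lipschitzness of $R$. The difference is therefore $O(\|W-W^*\|\,\|W-W^\diamond\|)$. It vanishes exactly at $W^*$ and at $W^\diamond$, so $T(W^*)=\nabla f_\Sigma(W^*)$ and $T(W^\diamond)=\nabla f_\Sigma(W^\diamond)$, and the lower bounds transfer verbatim. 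At $\tdW^*$ the difference is $O(\delta^2)$, giving the upper bound.

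\textbf{Main obstacle.} The delicate part is uniformity of the constants. The quadratic remainders use only local Lipschitz constants on a bounded region containing the three points, and nonsingularity of $H$ yields $\|H^{-1}\|$. I would fix the compact set $\mathrm{conv}\{W^*,W^\diamond,\tdW^*\}$ (assuming it lies in $\ccalS$) and take the constants there.
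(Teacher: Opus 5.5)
Your proposal is correct and follows essentially the paper's route. You Taylor-expand $\nabla f$ at $W^*$ and $R$ at $W^\diamond$ so the linear terms cancel at $\tdW^*$; you get the lower bounds directly from $\nabla f_\Sigma(W^\diamond)=\nabla f(W^\diamond)$ and $\nabla f_\Sigma(W^*)=\Sigma\odot R(W^*)$ via a mean-value argument; and you pass to $T$ using that $T-\nabla f_\Sigma$ vanishes at $W^*$ and $W^\diamond$ and is $O(\|\tdW^*-W^*\|\,\|\tdW^*-W^\diamond\|)$ at $\tdW^*$. Your mean-value bound on $G$, giving $c_0/q_{\max}$, is slightly sharper than the paper's $c_0q_{\min}/q_{\max}^2$.

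Two small points remain. Nonsingularity of $H$ is part of the claim, but it follows immediately from your Step 2: $\Diag(\Sigma)J_R(W^\diamond)$ is diagonal with positive entries, so $H\succeq\nabla^2 f(W^*)\succeq\mu' I$. And for $D>1$, $\tdW^*=W^*+H^{-1}\Diag(\Sigma)J_R(W^\diamond)(W^\diamond-W^*)$ is a matrix-weighted rather than a convex combination, so the small-$\delta$ argument (which the paper uses) is the right way to get $\tdW^*\in\ccalS$.
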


The proof of Theorem \ref{theorem:implicit-regularization} is deferred to Appendix \ref{section:proof-implicit-regularization}.
Theorem \ref{theorem:implicit-regularization} is local in nature and pertains to the regime where $W^*$ is close to the symmetry point $W^\diamond$.
In the degenerate case $W^*=W^\diamond$, the implicit penalty vanishes.
More generally, when $\|W^\diamond-W^*\|$ is small, Theorem \ref{theorem:implicit-regularization} implies that $\tdW^*$ provides a substantially better approximation than $W^*$ or $W^\diamond$ to both (i) a critical point of $f_\Sigma$ and (ii) a stationary point of the mean {\AnalogSGD} dynamics.
Accordingly, we interpret {\AnalogSGD} as implicitly optimizing the penalized objective \eqref{problem:implicit-regularization}.

Theorem \ref{theorem:implicit-regularization} suggests that the implicit penalty $R_c(W)$ forces the weight towards a symmetric point $W^\diamond$, which will be demonstrated by a simplified setting. Suppose that (i) the weight $W$ is a scalar ($D=1$), and $\nabla^2 f(W^*)$ and $J_R(W^\diamond)$ reduce to $f''(W^*)$ and $R'(W^\diamond)$, respectively; (ii) $f(\,\cdot\,)$ is strictly convex (i.e., $f''(W) > 0, \forall W$); and (iii) $R(W)$ is increasing monotonically, i.e., $R'(W^\diamond) \ge 0$, (e.g., it happens when $q_+'(W)\le 0, q_-'(W)\ge 0, \forall W$).
At that time, $\tdW^* = \frac{f''(W^*) W^*+R'(W^\diamond)\Sigma W^\diamond}{f''(W^*)+R'(W^\diamond)\Sigma}$. 
Since $R'(W^\diamond) \ge 0$, $\tdW^*$ is an interpolation of the optimal solution $W^*$ and the symmetric point $W^\diamond$. 
In the limiting deterministic case $\Sigma=0$, the implicit penalty vanishes and $\tdW^*=W^*$, although the lower bounds in Theorem \ref{theorem:implicit-regularization} require positive components of $\Sigma$.
On the other hand, if the noise is large, i.e., $\Sigma\to\infty$, the implicit penalty dominates the objective and $\tdW^*$ approaches $W^\diamond$.

Due to the implicit penalty, {\AnalogSGD} can not converge to the optimal value $f^*$ exactly. It motivates us to find approaches to enable exact convergence on analog hardware.
\section{Exact Solution under State-Dependent Bias via Bilevel Reformulation}
\label{section:TT}
In this section, we first investigate the conditions under which {\AnalogSGD} converges exactly based on the insights from Theorem \ref{theorem:implicit-regularization}. When the condition is not satisfied, we propose a bilevel optimization problem whose optimal solutions coincide with the original ones despite the implicit penalty, and we develop a generalized hyper-gradient algorithm, which we call {\ResidualLearning}, to solve it. 

\textbf{Construction of the bilevel optimization problem.}
Theorem \ref{theorem:implicit-regularization} suggests that {\AnalogSGD} implicitly optimizes the problem {\linktoIPwop}, whose minimizer is approximately $\tdW^*$, which differs from $W^*$ if the symmetric and optimal points do not coincide ($W^\diamond\ne W^*$) and the gradients are stochastic ($\Sigma\ne 0$). Therefore, the crucial technique that ensures convergence to $W^*$ is to avoid both conditions holding simultaneously. It motivates us to decouple the original problem into two coupled subproblems: (i) a shifted problem where the symmetric and optimal points coincide, and (ii) optimizing the shifting term whose deterministic gradient is available.
This paper assumes $W^\diamond$ is known, 
which is practically viable by alternatively applying positive and negative pulses \citep{kim2019zero}.
Under this assumption, it holds that $R_c(W^\diamond)=0$ by definition of $R(\cdotc)$.
\begin{assumption}[Known symmetric point]
    \label{assumption:zero-sp}
A symmetric point $W^\diamond$ is known.
\end{assumption}

\begin{figure}
    \centering
    \includegraphics[width=0.7\linewidth]{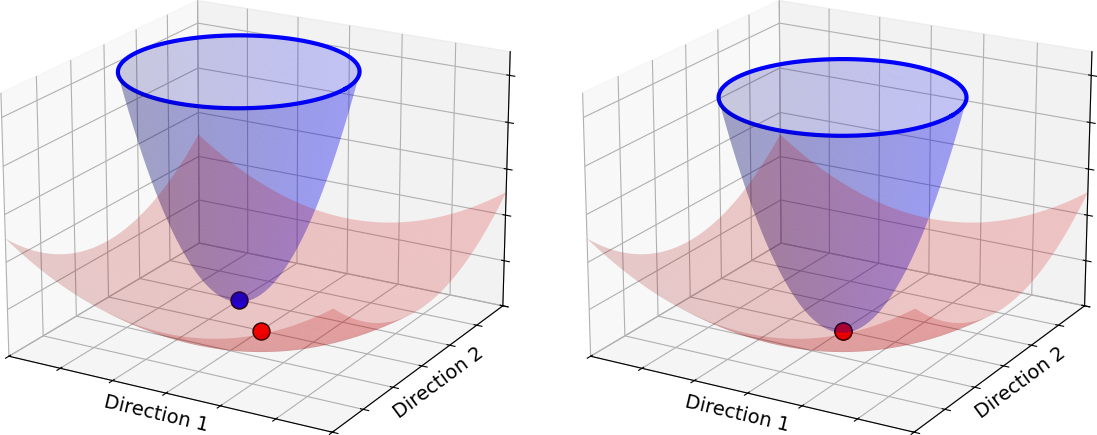}
    \caption{
    Illustration of the shifted problem in 2-dimensional space ($D=2$). 
    The blue and red surfaces are the landscape of $f(\cdotc)$ and $\la \Sigma, R_c(\cdotc)\ra$ 
    in problem $\mathcal{IP}$, respectively. 
    \textbf{(Left)} Before shifting, variable is $W$;
    \textbf{(Right)} After shifting, variable is $P$.
    }
    \label{fig:shift-problem}
\end{figure}

 \paragraph{Construct a shifted problem with its solution $W^\diamond$.}
For simplicity of illustration, we first assume that the minimizer $W^*$ is unique, and we will later relax this assumption to accommodate non-unique settings. Define a mapping $W\mapsto W+\gamma (P-W^\diamond)$ with a constant $\gamma > 0$, which adds a shift $\gamma(P-W^\diamond)$ on the original variable $W$. Instead of $W$, we optimize the auxiliary variable $P\in\reals^D$, which leads to a shifted problem $\min_{P\in\reals^D} f(W + \gamma (P-W^\diamond))$. 
With stochastic gradient $\nabla f(W+(P_k-W^\diamond); \xi_k)$ and a given $W$, Theorem \ref{theorem:implicit-regularization} suggests that running {\AnalogSGD} on $P$ implicitly optimizes the penalized problem
\begin{equation}
    \label{problem:shifted-P}
    \min_{P\in\reals^D} f(W + \gamma (P-W^\diamond)) + \la \Sigma, R_c(P)\ra.
\end{equation}
It can be verified that if $W=W^*$,  $P=W^\diamond$ is the optimal solution of \eqref{problem:shifted-P}, at which the optimal value is $f^*$ since 
$f(W + \gamma (P-W^\diamond)) + \la \Sigma, R_c(P)\ra \ge f(W) + \la \Sigma, R_c(W^\diamond)\ra \ge f^*$.
At that time, the stationary point of {\AnalogSGD} and the symmetric point coincide, and $P=W^\diamond$ remains an optimal solution despite the presence of the implicit penalty term; see Fig. \ref{fig:shift-problem} for the illustration of this idea.
Since $W^*$ is unknown in general, we propose to optimize the following non-ideal shifted problem for a given $W_k$, which converges to the ideal one \eqref{problem:shifted-P} when $W_k\to W^*$ 
\begin{equation}\label{eq.sub_p}
    P^*(W_k) = \argmin_{P\in\reals^D}\, f(W_k + \gamma (P-W^\diamond)).
\end{equation} 
Note that the shift $P^*(W)-W^\diamond = (W^* - W)/\gamma$, can be interpreted as the \emph{residual} from $W$ to $W^*$. From this perspective, \eqref{eq.sub_p} optimizes the $P$ to find the residual of $W$. As the stationary point and the symmetric point coincide only if $W=W^*$, the next step is to ensure the residual $\|W^*-W\|^2\to 0$.

\paragraph{Minimize the residual.} 
We construct another problem 
$\min_{W\in\reals^D} \frac{1}{2}\|W^*-W\|^2$.
However, since $W^*$ is unknown, we can not directly optimize the residual. Instead, we will construct another problem whose optimal solution is $W^*$ and its gradient can be approximated by $P^*(W)-W^\diamond$.
Since $P^*(W) = \frac{1}{\gamma}(W^*-W) + W^\diamond$, this is tantamount to minimizing
\begin{tcolorbox}[emphblock]
\centering
\vspace{-0.6\baselineskip}
\begin{align}
    \label{problem:residual-learning}
    (\mathcal{BP}):~~
    \min_{W\in\reals^D} ~\frac{\gamma^2}{2}\|P^*(W)-W^\diamond\|^2, \quad
    \st~
    P^*(W) = \argmin_{P\in\reals^D} ~f(W + \gamma (P-W^\diamond)).
\end{align}
\vspace{-1.0\baselineskip}
\end{tcolorbox}
\noindent
Therefore, if the auxiliary variable $P_k$ approximates the minimizer of problem \eqref{eq.sub_p}, i.e., $P_k\approx P^*(W_k)$, the gradient can be approximated by $\nabla [\frac{\gamma^2}{2}\|P^*(W_k)-W^\diamond\|^2] \approx - \gamma(P_k-W^\diamond)$.
Conditioned on $P_k$, this gradient is deterministic, leading to the variance term $\Sigma=0$ in Theorem \ref{theorem:implicit-regularization}. Therefore, optimizing $\|P^*(W)-W^\diamond\|^2$ via {\AnalogSGD} allows $W_k$ to converge to $W^*$. 
The following theorem shows that the bilevel reformulation {\linktoBPwop} is equivalent to {\linktoPwop}, which can be proved by plugging in the definition of $P^*(W)$ and the optimality of $W^*$ and verifying {\linktoPwop} has unique minimizer. 
\begin{theorem}[Equivalence between {\linktoPwop} and \linktoBPwop]
    \label{theorem:optimum-of-bilevel-problem}
    {\linktoBPwop} has the unique global minimizer $W^*$.
\end{theorem}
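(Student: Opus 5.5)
The plan is to compute the lower-level solution map $P^*(W)$ in closed form, substitute it into the upper-level objective, and read off the minimizer. Throughout I use the standing assumption of this part of the section that {\linktoPwop} has a unique minimizer $W^*$.

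\textbf{Step 1: the lower-level problem.} Fix $W\in\reals^D$ and consider the affine change of variables $V = W+\gamma(P-W^\diamond)$. Since $\gamma>0$, this map $P\mapsto V$ is a bijection of $\reals^D$ with inverse $P = W^\diamond + (V-W)/\gamma$. Hence
\[
P \in \argmin_{P'} f\bigl(W+\gamma(P'-W^\diamond)\bigr)
\quad\Longleftrightarrow\quad
W+\gamma(P-W^\diamond)\in\argmin_{V} f(V)=\{W^*\}.
\]
So the lower-level problem has exactly one solution for every $W$, namely $P^*(W) = W^\diamond + (W^*-W)/\gamma$. In particular, {\linktoBPwop} is well posed, with a single-valued $P^*(\cdotc)$ defined on all of $\reals^D$.

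\textbf{Step 2: the upper-level objective.} Substituting $P^*(W)$ gives
\[
\frac{\gamma^2}{2}\|P^*(W)-W^\diamond\|^2 = \frac{\gamma^2}{2}\cdot\frac{1}{\gamma^2}\|W^*-W\|^2 = \frac12\|W-W^*\|^2 .
\]
This function is nonnegative, equals $0$ if and only if $W=W^*$, and is strictly convex. Therefore $W^*$ is the unique global minimizer of {\linktoBPwop}, which coincides with the unique minimizer of {\linktoPwop}.

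\textbf{Step 3: the main subtlety.} The only delicate point is the uniqueness of $W^*$ used in Step 1. If $\argmin f$ contained more than one point, $P^*(W)$ would be set-valued, and the upper-level objective would have to be read as, for example, $\min_{P\in P^*(W)}$. Every $W\in\argmin f$ would then attain the value $0$, so the claim would weaken to equality of solution sets. Since the statement assumes uniqueness at this stage, the argument above is complete, and I would add only a remark pointing to the later relaxation to non-unique minimizers.
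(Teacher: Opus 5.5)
Your proposal is correct and follows the same route as the paper's (sketched) argument: uniqueness of $W^*$ gives the closed form $P^*(W)=W^\diamond+(W^*-W)/\gamma$, and substituting it turns the upper-level objective into $\frac{1}{2}\|W-W^*\|^2$, whose unique minimizer is $W^*$. Your bijection argument for the lower level and your remark on the non-unique case spell out what the paper leaves implicit.
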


Now we propose a gradient-based method to solve the bilevel problem {\linktoBPwop}.
Let $\bar W_k := W_k + \gamma (P_k-W^\diamond)$ denote the shifted weight.
The lower-level variable $P$ is updated by the stochastic gradient $\nabla f(\bar W_k; \xi)$, following the recursion of {\AnalogSGD}.
For the upper-level variable $W$, since $P^*(W)=(W^*-W)/\gamma+W^\diamond$, the \textit{hyper-gradient} of the upper-level objective $\frac{\gamma^2}{2}\|P^*(W)-W^\diamond\|^2$ is proportional to $-\gamma(P^*(W)-W^\diamond)$. By approximating $P^*(W_k)$ with the auxiliary variable $P_{k+1}$, we update $W$ along $\gamma(P_{k+1}-W^\diamond)$.
Since both $W_k$ and $P_k$ incur the hardware-inspired bias, we replace $\Delta Z_k$ in \eqref{biased-update} by $-\alpha\nabla f(\bar W_k; \xi_k)$ and $\beta\gamma(P_{k+1}-W^\diamond)$, respectively, where $\beta$ is a stepsize. We then obtain the update 
\begin{tcolorbox}[emphblock]
\centering
\vspace{-0.8\baselineskip}
\begin{align}
    \label{recursion:HD-P}
    \vspace{0.5em}
    P_{k+1} = &\ P_k - \alpha\nabla f(\bar W_k; \xi_k)\odot F(P_k) 
    - \alpha|\nabla f(\bar W_k; \xi_k)|\odot G(P_k) \\
    \label{recursion:HD-W}
    \raisebox{0.53\baselineskip}[\height][\depth]{\vphantom{a}\texttt{Residual Learning}  \hspace{0.8em}}
    W_{k+1} =&\ W_k + \beta\gamma (P_{k+1}-W^\diamond)\odot F(W_k) - \beta\gamma|P_{k+1}-W^\diamond|\odot G(W_k).
\end{align}
\vspace{-1.2\baselineskip}
\end{tcolorbox}
\noindent
Featuring optimizing $P$ to estimate the residual and minimizing the residual itself at the lower- and upper-levels, respectively, the proposed method is termed {\ResidualLearning}.
It is noteworthy that, although state-dependent biases persist in every iteration of \eqref{recursion:HD-P} and \eqref{recursion:HD-W}, their cumulative impact is effectively mitigated over the long term. This property is rigorously substantiated by the theoretical analysis presented in the subsequent sections.

\textbf{Retaining exact convergence under the implicit penalty.}
Unlike in \S\ref{section:preliminary-response-function}, where the implicit penalty drives {\AnalogSGD} away from $W^*$, the implicit penalty here turns out to be harmless.
Involving stochastic gradients, update \eqref{recursion:HD-P} incurs the implicit penalty. Similar to \S\ref{section:preliminary-response-function}, we investigate the underlying penalized bilevel problem. 
Since $P_k$'s update involves the stochastic gradient $\nabla f(\bar W_k; \xi_k)$, an implicit penalty term $\la \Sigma, R_c(P)\ra$ is introduced in the lower-level problem. In the upper level, the approximated hyper-gradient $-\gamma(P_{k+1}-W^\diamond)$ is deterministic given $P_{k+1}$, and hence no implicit penalty term is imposed. Therefore, {\ResidualLearning} implicitly optimizes the following  problem
\begin{align}
    \label{problem:IBP}
    (\mathcal{IBP}):~
    \min_{W\in\reals^D} \frac{\gamma^2}{2}\|P^*(W)-W^\diamond\|^2,\quad
    P^*(W) = \argmin_{P\in\reals^D} f(W + \gamma (P-W^\diamond)) 
    + {\la \Sigma, R_c(P)\ra}.
\end{align}
Note that the implicit penalty term does not change the lower-level response at an optimal $W$, since $P=W^\diamond$ minimizes both $f(W+\gamma(P-W^\diamond))$ and $\la \Sigma, R_c(P)\ra$ if $W$ is a minimizer of $f(\cdotc)$. The following theorem demonstrates that problem {\linktoIBPwop} and problem {\linktoPwop} share the same solution in $W$.
\begin{theorem}[Equivalence between {\linktoPwop} and {\linktoIBPwop}]
    \label{theorem:optimum-of-bilevel-penalized-problem}
    {\linktoIBPwop} has the unique global minimizer $W^*$.
\end{theorem}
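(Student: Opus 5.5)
The plan is to show that the upper-level objective of \linktoIBPwop{} is nonnegative, vanishes at $W^*$, and vanishes \emph{only} at $W^*$. Nonnegativity is immediate, since $\frac{\gamma^2}{2}\|P^*(W)-W^\diamond\|^2\ge 0$. Everything therefore reduces to characterizing the set $\{W : P^*(W)=W^\diamond\}$, where $P^*(W)$ is now the minimizer of the penalized lower-level objective
\begin{align*}
g_W(P):=f(W+\gamma(P-W^\diamond))+\la \Sigma, R_c(P)\ra .
\end{align*}
We work under the standing hypotheses used for \linktoBPwop: $f$ has a unique minimizer $W^*$ (strongly convex, as in Theorem~\ref{theorem:implicit-regularization}), and $R$ is component-wise strictly increasing with $R(W^\diamond)=0$. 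The latter is guaranteed by $q_+'<0<q_-'$, which makes $G$ increasing and, since $F>0$ and $G$ changes sign only at $w^\diamond$, keeps $R=G/F$ nonnegative above $w^\diamond$ and nonpositive below. Hence each $[R_c]_d$ is nonnegative and convex with unique minimizer $[W^\diamond]_d$. Combined with $\Sigma\ge 0$ and strong convexity of $P\mapsto f(W+\gamma(P-W^\diamond))$, this makes $g_W$ strongly convex, so $P^*(W)$ is well defined and unique for every $W$.

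\textbf{Step 1 ($W^*$ attains zero).} At $W=W^*$ the two terms of $g_{W^*}$ are minimized simultaneously at $P=W^\diamond$: the first equals $f(W^*+\gamma(P-W^\diamond))\ge f^*$ with equality at $P=W^\diamond$, and the second satisfies $\la\Sigma,R_c(P)\ra\ge 0=\la\Sigma,R_c(W^\diamond)\ra$. Thus $W^\diamond$ minimizes the sum, and uniqueness of the minimizer gives $P^*(W^*)=W^\diamond$. The upper-level objective is therefore $0$ at $W^*$, so $W^*$ is a global minimizer.

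\textbf{Step 2 (uniqueness).} Suppose $P^*(W)=W^\diamond$ for some $W$. The first-order optimality condition of $g_W$ at $P=W^\diamond$ reads
\begin{align*}
\gamma\nabla f(W)+\Sigma\odot R(W^\diamond)=0 .
\end{align*}
Since $R(W^\diamond)=0$ and $\gamma>0$, this forces $\nabla f(W)=0$. By uniqueness of the critical point of $f$ (strong convexity), $W=W^*$. Consequently every $W\neq W^*$ has $P^*(W)\neq W^\diamond$ and a strictly positive objective, which establishes that $W^*$ is the unique global minimizer.

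\textbf{Main obstacle.} The delicate point is making sure $P^*(W)$ is a well-defined single-valued map, so that the $\argmin$ is meaningful and the first-order condition characterizes it. This requires convexity of $R_c$, i.e., monotonicity of $R$, which must be extracted from the regularity assumptions rather than taken for granted. If global monotonicity of $R$ is unavailable, I would fall back on a weaker statement: any minimizer $P$ of $g_W$ satisfies the stationarity condition, and the argument of Step~2 uses only stationarity at $P=W^\diamond$. Step~1 likewise uses only the nonnegativity of $R_c$, which follows from the sign structure of $G$. Together these still give the conclusion provided the $\argmin$ is interpreted as a set and the objective is evaluated on any selection from it.
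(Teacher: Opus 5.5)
Your proof is correct, and it covers more than the paper's own argument does.

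Your Step 1 is essentially the paper's proof. The one difference is how you pin down $P^*(W^*)=W^\diamond$. The paper uses an equality-case argument: since $\langle \Sigma, R_c(P)\rangle\ge 0$, the value $f^*$ of $g_{W^*}$ can be attained only when $f(W^*+\gamma(P-W^\diamond))=f^*$, which forces $P=W^\diamond$. That needs only nonnegativity of $R_c$ and uniqueness of the minimizer of $f$, not convexity of $R_c$. Your fallback needs exactly this in Step 1, because without convexity you can no longer invoke uniqueness of the $\operatorname{argmin}$.

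The paper's proof stops at ``$W^*$ is a solution of \linktoIBPwop{}''. It never rules out another $W$ with $P^*(W)=W^\diamond$. Your Step 2 supplies this uniqueness half of the statement through the first-order condition $\gamma\nabla f(W)+\Sigma\odot R(W^\diamond)=\gamma\nabla f(W)=0$. It uses only stationarity at $W^\diamond$, the fact $R(W^\diamond)=0$, and that $f$ has a single critical point. The last ingredient is why convexity of $f$ (e.g.\ Assumption~\ref{assumption:Lip}) is genuinely needed there, not just uniqueness of its minimizer.

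Your stronger hypothesis, global monotonicity of $R$ and hence strong convexity of $g_W$, buys single-valuedness of $P^*(W)$ for every $W$. That makes the upper-level objective well defined everywhere, a point the paper leaves implicit.

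One repair is needed. ``$G$ increasing and $F>0$'' gives the sign pattern of $R$, but not monotonicity of $R=G/F$. For monotonicity, use the quotient formula $R'=2(q_-'q_+-q_+'q_-)/(q_-+q_+)^2>0$ from the proof of Theorem~\ref{theorem:implicit-regularization}. Also say explicitly that you assume $q_+'<0<q_-'$ globally, not only on $\mathcal{S}$. The paper makes the same kind of global assumption implicitly when it asserts $\langle\Sigma,R_c(\cdot)\rangle\ge 0$.
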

\begin{proof}
    Since $f(\cdotc)\ge f^*$ and $\la \Sigma, R_c(\cdotc)\ra \ge 0$, for any $P\in\reals^D$ it holds that
    $f(W^*+\gamma(P-W^\diamond))+\la \Sigma, R_c(P)\ra \ge f^*$, which holds at $P=W^\diamond$ because $R_c(W^\diamond)=0$; moreover, equality requires $W^*+\gamma(P-W^\diamond)=W^*$, and hence $P=W^\diamond$. Thus $P^*(W^*)=W^\diamond$, and the upper-level objective of {\linktoIBPwop} attains value $0$ at $W^*$. Since the upper-level objective is nonnegative, $W^*$ is a solution of {\linktoIBPwop}.
\end{proof}
Theorem \ref{theorem:optimum-of-bilevel-penalized-problem} ensures that even though there is an implicit penalty term in the lower-level objective, the minimizer of the penalized objective is the same as the original minimizer since the minimizer of the original objective coincides with the minimizer of the penalty term.

\section{Convergence Analysis of Residual Learning}
\label{section:convergence:RL}
The equivalence between the solution of {\linktoIBPwop} and the solution of {\linktoPwop} offers a useful intuition for residual learning.
This section moves beyond this equivalence and analyzes the convergence properties and the optimization difficulty introduced by state-dependent bias.
We first introduce a series of assumptions about the objective landscape, as well as the noise distribution.

\begin{assumption}[$L$-smoothness and $\mu$-strong convexity]  \label{assumption:Lip}
    The objective $f(W)$ is $L$-smooth
    and $\mu$-strongly convex,
    i.e., for any $W, W' \in \reals^{D}$, $
    \mu\|W-W'\| \le
    \|\nabla f(W)-\nabla f(W')\| \le L\|W-W'\|$.
\end{assumption}

\begin{assumption}[Unbiasness and bounded variance]
    \label{assumption:noise}
    The stochastic gradient is unbiased and has bounded variance, i.e., $\mbE_{\xi_k}[\nabla f(W_k;\xi_k)] = \nabla f(W_k)$ and $\mbE_{\xi_k}[\|\nabla f(W_k;\xi_k)-\nabla f(W_k)\|^2]\le\sigma^2$. 
\end{assumption}
Assumptions \ref{assumption:Lip}--\ref{assumption:noise} are standard in stochastic optimization \citep{bottou2018optimization}. The strong convexity in Assumption \ref{assumption:Lip} ensures a unique global minimizer and provides the curvature needed for our convergence analysis.
Now we establish the convergence of {\ResidualLearning}.

\begin{restatable}[Convergence rate of \ResidualLearning]{theorem}{ThmTTConvergenceScvx}
    \label{theorem:TT-convergence-scvx}
    Suppose Definition \ref{assumption:response-factor} and Assumptions
    \ref{assumption:zero-sp}--\ref{assumption:noise} hold.
    Let
    $\alpha=\Theta\lp \dfrac{q_{\max}\log K}{\gamma\mu q_{\min}^{2} K}\rp$,
    $\beta = \Theta\lp\dfrac{\alpha\gamma\mu q_{\min}}{q_{\max}}\rp$, and
    $\gamma \ge \Omega\lp \dfrac{L_S\sigma\sqrt{q_{\max}}}{\mu q_{\min}^{3/2}}\rp$.
    It holds that
    \begin{align}
        \label{inequality:TT-convergence-upperbound-faster}
        \mbE\Big[
        \underbrace{f(W_K + \gamma (P_K-W^\diamond)) - f^*}_{\text{Lower-level objective}}
        + C\underbrace{\lnorm P^*(W_K)-W^\diamond\rnorm^2}_{\text{Upper-level objective}}
        \Big] \le
        \tilde\ccalO\lp \frac{\kappa_1 \kappa_2^4 \sigma^2}{\mu K}\rp
    \end{align}
    where $C$ is a positive constant, $\kappa_1 := L/\mu$, and $\kappa_2 := q_{\max}/q_{\min}$.
\end{restatable}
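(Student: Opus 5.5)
The proof will be a Lyapunov argument on a two-timescale, single-loop stochastic recursion. We track
\[
\Phi_k := \mbE\big[f(\bar W_k)-f^*\big] + C\,\mbE\big[\|P^*(W_k)-W^\diamond\|^2\big],
\qquad \bar W_k = W_k+\gamma(P_k-W^\diamond).
\]
Since $P^*(W)-W^\diamond=(W^*-W)/\gamma$, the second term equals $\frac{C}{\gamma^2}\|W_k-W^*\|^2$. The goal is a contraction $\Phi_{k+1}\le (1-c\,\beta\gamma\mu q_{\min}/q_{\max})\Phi_k + \ccalO(\alpha^2\ldots)$, which we then unroll with the stated stepsizes.

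\textbf{Step 1: lower-level descent.} The first task is to bound $f(\bar W_{k+1})$. We write
\[
\bar W_{k+1}-\bar W_k=\gamma(P_{k+1}-P_k)+(W_{k+1}-W_k)
\]
and apply $L$-smoothness. Using the decomposition $A(\Delta z;z)=\Delta z\odot F-|\Delta z|\odot G$, the $P$-step contributes
\[
-\alpha\gamma\la\nabla f(\bar W_k),\nabla f(\bar W_k)\odot F(P_k)\ra \le -\alpha\gamma q_{\min}\|\nabla f(\bar W_k)\|^2 .
\]
It also contributes the bias term $-\alpha\gamma\la \nabla f(\bar W_k),\mbE|\nabla f(\bar W_k;\xi_k)|\odot G(P_k)\ra$. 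To control this bias, we use $|G(P)|\le L_S\|P-W^\diamond\|$ componentwise, which follows from $G(W^\diamond)=0$ and Lipschitzness, together with $\mbE|\nabla f(\cdot;\xi)|\le|\nabla f|+\sigma$. This bounds the bias by $\alpha\gamma L_S(\|\nabla f\|^2\|P_k-W^\diamond\|+\sigma\|\nabla f\|\|P_k-W^\diamond\|)$. We then split $\|P_k-W^\diamond\|\le\|P_k-P^*(W_k)\|+\|P^*(W_k)-W^\diamond\|$ and note $\|P_k-P^*(W_k)\|\le\|\nabla f(\bar W_k)\|/(\gamma\mu)$. After Young's inequality, the noise-times-drift cross term is absorbed provided $\gamma\gtrsim L_S\sigma\sqrt{q_{\max}}/(\mu q_{\min}^{3/2})$; this is exactly the source of the condition on $\gamma$. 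The $W$-step enters $f(\bar W)$ with a gradient inner product of size $\beta\gamma q_{\max}\|\nabla f\|\|P_{k+1}-W^\diamond\|$, which is small because $\beta\ll\alpha$. Finally, strong convexity (PL) converts $\|\nabla f(\bar W_k)\|^2$ into $f(\bar W_k)-f^*$.

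\textbf{Step 2: upper-level contraction.} Next we expand $\|W_{k+1}-W^*\|^2$. The main drift is
\[
2\beta\gamma\la W_k-W^*,(P_{k+1}-W^\diamond)\odot F(W_k)\ra ,
\]
and we write $P_{k+1}-W^\diamond=(W^*-W_k)/\gamma+(P_{k+1}-P^*(W_k))$. The first piece yields a contraction of $-2\beta q_{\min}\|W_k-W^*\|^2$. The tracking error piece is bounded by $\|\nabla f(\bar W_{k+1})\|/(\gamma\mu)$, or by $\|\nabla f(\bar W_k)\|$ plus $\alpha$-terms, and is paid for by the lower-level descent once $\beta=\Theta(\alpha\gamma\mu q_{\min}/q_{\max})$. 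The $|\cdot|\odot G(W_k)$ term is the upper-level bias. It is deterministic given $P_{k+1}$ and satisfies $|G(W_k)|\le L_S\|W_k-W^\diamond\|$; more usefully, $|G|\le (q_{\max}-q_{\min})/2$. We therefore bound it by $\beta\gamma\|P_{k+1}-W^\diamond\|\cdot\|W_k-W^*\|$ times a factor that must stay below $q_{\min}$. Here is the subtle point: $G/F$ has magnitude less than $1$ componentwise because $q_\pm>0$, and $|x|\,G\le x\,F$-type inequalities are available. So along the exact direction $P^*(W_k)-W^\diamond\parallel W^*-W_k$, the effective step $A(\Delta;W)$ always has the sign of $\Delta$ with magnitude at least $q_{\min}|\Delta|$. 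This componentwise sign preservation, $\Delta\cdot A(\Delta;w)\ge q_{\min}\Delta^2$, is what gives the contraction without needing $W^\diamond$ to be close to $W^*$.

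\textbf{Step 3: combine and unroll.} We choose $C\asymp \gamma^2\alpha\ldots$ so that the cross terms cancel, obtaining $\Phi_{k+1}\le(1-\rho)\Phi_k+\ccalO(\alpha^2\gamma^2 L q_{\max}^2\sigma^2)$ with $\rho\asymp\beta q_{\min}\asymp\alpha\gamma\mu q_{\min}^2/q_{\max}$. Unrolling with $\alpha=\Theta(q_{\max}\log K/(\gamma\mu q_{\min}^2K))$ makes $(1-\rho)^K\le 1/K$ and leaves the noise floor
\[
\frac{\alpha^2\gamma^2Lq_{\max}^2\sigma^2}{\rho}\asymp \frac{\alpha\gamma L q_{\max}^3\sigma^2}{\mu q_{\min}^2}\asymp \frac{L\sigma^2 q_{\max}^4\log K}{\mu^2 q_{\min}^4K},
\]
which is $\tilde\ccalO(\kappa_1\kappa_2^4\sigma^2/(\mu K))$. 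I expect the main obstacle to be Step 1's bias term: it is not vanishing in $\sigma$, and it couples the lower-level error to $\|P^*(W_k)-W^\diamond\|$. Closing the recursion therefore requires exactly balancing $\gamma$ against $L_S\sigma$ and choosing the weight $C$ appropriately; I would first attempt to absorb it using the Young split described above.
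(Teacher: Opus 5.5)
Your overall architecture matches the paper's: a Lyapunov function combining $f(\bar W_k)-f^*$ with a weighted $\|P^*(W_k)-W^\diamond\|^2$, quadratic growth to turn the tracking error into a lower-level gap, and $\|G(P_k)\|\le L_S\|P_k-W^\diamond\|$ split and absorbed by a large $\gamma$. However, two of your analog-operator estimates do not close.

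First, in Step 1 you treat all of $\mathbb{E}|\nabla f(\bar W_k;\xi_k)|\odot G(P_k)$ as bias. You bound it through $\mathbb{E}|\nabla f(\bar W_k;\xi_k)|\le|\nabla f(\bar W_k)|+\sigma$ and the Lipschitz bound on $G$. That leaves $\alpha\gamma L_S\|\nabla f(\bar W_k)\|^2\|P_k-W^\diamond\|$, which after your split is $\frac{\alpha L_S}{\mu}\|\nabla f(\bar W_k)\|^3+\alpha L_S\|\nabla f(\bar W_k)\|^2\|W_k-W^*\|$. These cubic terms are dominated by $\alpha\gamma q_{\min}\|\nabla f(\bar W_k)\|^2$ only if the iterates are confined a priori, and nothing guarantees that. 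The deterministic part must not be treated as bias at all. Since $F-|G|=\min\{q_+,q_-\}\ge q_{\min}$, one has $\langle g,\,g\odot F+|g|\odot G\rangle\ge q_{\min}\|g\|^2$ exactly. The only bias left is $(|g+\delta|-|g|)\odot G$ with $\big||g+\delta|-|g|\big|\le|\delta|$. This gives the clean error $\frac{\alpha\gamma\sigma^2}{q_{\min}}\|G(P_k)\|_\infty^2$, which your $\gamma$ condition then absorbs.

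Second, the inter-level coupling does not close at $\beta=\Theta(\alpha\gamma\mu q_{\min}/q_{\max})$ with the estimates you describe. Write $\lambda$ for the Lyapunov weight on $\|W_k-W^*\|^2$. Suppose you bound the drift $\langle\nabla f(\bar W_k),W_{k+1}-W_k\rangle$ through $\|W_{k+1}-W_k\|\le\beta\gamma q_{\max}\|P_{k+1}-W^\diamond\|$, then use Young against the contractions $\alpha\gamma q_{\min}\|\nabla f(\bar W_k)\|^2$ and $\lambda\beta q_{\min}\|W_k-W^*\|^2$. You then need $\lambda\gtrsim\beta q_{\max}^2/(\alpha\gamma q_{\min}^2)\asymp\mu\kappa_2$. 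But you must also pay the upper-level tracking term out of the lower-level contraction $\asymp\alpha\gamma\mu q_{\min}(f(\bar W_k)-f^*)$. At best that term is $\lambda\beta q_{\max}\gamma^2\|P_{k+1}-P^*(W_k)\|^2\lesssim\frac{2\lambda\beta q_{\max}}{\mu}(f(\bar W_k)-f^*)+\cdots$, which forces $\lambda\lesssim\mu$. Reconciling the two forces $\beta\lesssim\alpha\gamma\mu\kappa_2^{-3/2}$ and roughly a $\kappa_2^5$ rate.

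Your Step 2 makes this worse. Set $\Delta:=P^*(W_k)-W^\diamond$ and $\delta:=P_{k+1}-P^*(W_k)$. Splitting $A(\Delta+\delta;W_k)$ linearly in its $F$-part inflates the tracking coefficient to $\beta q_{\max}^2/q_{\min}$, giving about $\kappa_2^6$. The bound $|G|\le(q_{\max}-q_{\min})/2<q_{\min}$ fails once $\kappa_2\ge 3$. Sign preservation holds for $A(\Delta;W_k)$, not for the perturbed argument $\Delta+\delta$ that is actually applied.

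The missing ingredient is a coordinatewise identity. Let $q$ be the response selected by the sign of $\Delta_d+\delta_d$. Then
$\Delta_d\,q\,(\Delta_d+\delta_d)=\frac{q}{2}\Delta_d^2+\frac{1}{2q}\bigl(q(\Delta_d+\delta_d)\bigr)^2-\frac{q}{2}\delta_d^2$,
and summing over $d$ gives
\[
\langle\Delta,A(\Delta+\delta;W_k)\rangle\ge\frac{q_{\min}}{2}\|\Delta\|^2+\frac{1}{2q_{\max}}\|A(\Delta+\delta;W_k)\|^2-\frac{q_{\max}}{2}\|\delta\|^2 .
\]
This supplies the sharp tracking coefficient $\beta q_{\max}$. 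It also puts a term $-\frac{1}{2\beta\gamma^2 q_{\max}}\|W_{k+1}-W_k\|^2$ into the upper-level recursion. Keep the lower-level drift as a multiple of $\|W_{k+1}-W_k\|^2$, namely $\frac{2}{\alpha\gamma q_{\min}}\|W_{k+1}-W_k\|^2$ after Young and smoothness, rather than bounding it through $\|P_{k+1}-W^\diamond\|$. With weight $C\gamma^2/2$ and $C=16\beta q_{\max}/(\alpha\gamma q_{\min})=\mu$, the negative term dominates this drift. That is exactly what makes the stated $\beta$ and the $\kappa_2^4$ rate attainable.
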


\begin{proof}[Proof]
The proof relies on the following two lemmas, which provide sufficient descent of the lower- and upper-level objectives, respectively.

\begin{restatable}[Sufficient descent of lower-level objective]{lemma}{LemmaTTbarWDescentScvx}
    \label{lemma:TT-barW-descent}
    Under Assumptions \ref{assumption:Lip}-\ref{assumption:noise}, it holds that
    \begin{align}
        \label{inequality:TT-barW-descent}
        \restateeq{inequality-saved:TT-barW-descent}
    \end{align}
    Keeping $W_k$ fixed, define $\bar W_{k+\frac{1}{2}} := W_k + \gamma(P_{k+1}-W^\diamond)$; we also have
    \begin{align}
        \label{inequality:TT-P-in-barW-descent}
        \restateeq{inequality-saved:TT-P-in-barW-descent}
    \end{align}
\end{restatable}

Recall that we use the square norm of $P^*(W) = (W^* - W) / \gamma + W^\diamond$ to measure the convergence of the upper-level problem.

\begin{restatable}[Sufficient descent of upper-level objective]{lemma}{LemmaTTWDescentScvx}
    \label{lemma:TT-W-descent}
    Under Definition \ref{assumption:response-factor}, if $\beta\le1/(2q_{\max})$, then it holds that
    \begin{align}
        \label{inequality:TT-W-descent}
        \restateeq{inequality-saved:TT-W-descent}.
    \end{align}
\end{restatable}

The proofs of Lemmas \ref{lemma:TT-barW-descent} and \ref{lemma:TT-W-descent} are deferred to Appendix \ref{section:proof-lemma:TT-barW-descent} and \ref{section:proof-lemma:TT-W-descent}, respectively.
Lemmas \ref{lemma:TT-barW-descent} claims that the lower-level objective has sufficient descent in expectation, with three types of error terms in the \ac{RHS}: (i) $\lnorm G(P_k)\rnorm^2_\infty$ captures the bias introduced by the implicit penalty; (ii) $\alpha^2\gamma^2 L q_{\max}^2 \sigma^2$ comes from the variance of stochastic gradients and vanishes as $\alpha$ is sufficiently small; 
(iii) $\|W_{k+1}-W_k\|^2$ captures the impact of the upper-level variable drift on the lower-level objective.
Lemma \ref{lemma:TT-W-descent} suggests the sufficient descent of the upper-level objective with the error term $\|P_{k+1} - P^*(W_k)\|^2$ that captures the optimality of the upper-level update, and it also provides a negative $\|W_{k+1}-W_k\|^2$ term that will cancel the lower-level drift. Now we separately provide the estimates needed for the remaining error terms.

\textbf{(I) Intermediate term $\mbE_{\xi_k}[\|P_{k+1} - P^*(W_k)\|^2]$.}
By the definition of $P^*(W_k)$ and QG condition in Lemma \ref{lemma:QG}, we bound it by
\begin{align}
    \label{inequality:QG-Wk}
    \frac{2}{\mu}(f(W_k + \gamma (P_{k+1}-W^\diamond)) - f^*) \ge \|W_k + \gamma (P_{k+1}-W^\diamond) - W^*\|^2
    = \gamma^2 \|P_{k+1} - P^*(W_k)\|^2.
\end{align}
By inequality \eqref{inequality:TT-P-in-barW-descent} in Lemma \ref{lemma:TT-barW-descent} and \eqref{inequality:QG-Wk}, we bound the $\|P_{k+1} - P^*(W_k)\|^2$ term by
\begin{align}
    \label{inequality:TT-convergence-Lya-2-T5}
    &\ \mbE_{\xi_k}[\|P_{k+1} - P^*(W_k)\|^2]
    \\
    \lemark{a}&\ \frac{2}{\mu\gamma^2}  \mbE_{\xi_k}[f(W_k + \gamma (P_{k+1}-W^\diamond)) - f^*]
    \nonumber\\
    \lemark{b}&\ \frac{2}{\mu\gamma^2}
    \lp
        f(\bar W_k) - f^*
        -\frac{\alpha\gamma q_{\min}}{2}\|\nabla f(\bar W_k)\|^2
        + \frac{\alpha\gamma\sigma^2}{q_{\min}}\lnorm G(P_k)\rnorm_\infty^2
        + \frac{\alpha^2\gamma^2 L q_{\max}^2 \sigma^2}{2}
    \rp
    \nonumber\\
    \lemark{c}&\ \frac{2}{\mu\gamma^2}(f(\bar W_k) - f^*)
    + \frac{2\alpha\sigma^2}{\mu\gamma q_{\min}}\lnorm G(P_k)\rnorm_\infty^2
    + \frac{\alpha^2 L q_{\max}^2 \sigma^2}{\mu}
    \nonumber
\end{align}
where $(a)$ comes from \eqref{inequality:QG-Wk};
$(b)$ comes from \eqref{inequality:TT-P-in-barW-descent} in Lemma \ref{lemma:TT-barW-descent};
$(c)$ drops the non-positive gradient term.

\textbf{(II) State-dependent error term $\lnorm G(P_k) \rnorm^2_\infty$.} As $W^\diamond$ is a symmetric point, it holds that $G(W^\diamond)=0$. Together with the Lipschitz continuity of the response function (Definition \ref{assumption:response-factor}), we have 
\begin{align}
    \label{inequality:TT-convergence-Lya-4-T2}
    \lnorm G(P_k) \rnorm^2_\infty
    \le \lnorm G(P_k) \rnorm^2
    = \lnorm G(P_k)-G(W^\diamond)\rnorm^2
    \le L_S^2 \|P_k-W^\diamond\|^2.
\end{align}
Using $\|P_k-W^\diamond\|^2 \le 2\|P_k-P^*(W_k)\|^2 + 2\|P^*(W_k)-W^\diamond\|^2$, quadratic growth condition (Lemma~\ref{lemma:QG}) as well as the definition of $P^*(W_k)$, we have
\begin{align}
    \label{inequality:TT-convergence-Lya-4-T2-2}
    \lnorm G(P_k) \rnorm^2_\infty
    \le&\ \frac{4L_S^2}{\mu\gamma^2}(f(\bar W_k)-f^*) + 2L_S^2\|P^*(W_k)-W^\diamond\|^2.
\end{align}

With all the inequalities and lemmas above, we are ready to prove Theorem \ref{theorem:TT-convergence-scvx}.
Define a Lyapunov function by
\begin{align}
    \label{eq:lyapunov-def}
    V_k :=&\ f(\bar W_k)-f^*
    + \frac{C\gamma^2}{2}\|P^*(W_k)-W^\diamond\|^2.
\end{align}
\noindent
By Lemmas \ref{lemma:TT-barW-descent} and \ref{lemma:TT-W-descent}, and bounding the residual term $\|P_{k+1}-P^*(W_k)\|^2$ and the hardware-induced term $\lnorm G(P_k)\rnorm_\infty^2$ via \eqref{inequality:TT-convergence-Lya-2-T5} and
\eqref{inequality:TT-convergence-Lya-4-T2-2},
$V_k$ has sufficient descent in expectation:
\begin{align}
    \label{inequality:TT-convergence-Lya-1}
		&\
    \mbE_{\xi_k}[V_{k+1}]
    =
    \mbE_{\xi_k}\lB
        f(\bar W_{k+1})-f^*
        + \frac{C\gamma^2}{2}\|P^*(W_{k+1})-W^\diamond\|^2
    \rB
    \\
    \le&\ \lp 1-\frac{\alpha\gamma\mu q_{\min}}{2}\rp
    (f(\bar W_k)-f^*)
    + \frac{\alpha\gamma\sigma^2}{q_{\min}}\lnorm G(P_k)\rnorm^2_\infty
    + \alpha^2\gamma^2 L q_{\max}^2 \sigma^2
    + \frac{2}{\alpha\gamma q_{\min}}~\mathbb{E}_{\xi_k}\lB\|W_{k+1}-W_k\|^2\rB
    \nonumber\\
    &\ 
    + \frac{C\gamma^2}{2} \Bigg(~
        \lp 1- \beta q_{\min}\rp\|P^*(W_k)-W^\diamond\|^2
        + \beta q_{\max}
        \mbE_{\xi_k}[\|P_{k+1} - P^*(W_k)\|^2]
        \bkeq\hspace{3em}
        -\frac{1}{2\beta\gamma^2 q_{\max}}
        \mathbb{E}_{\xi_k}\lB\|W_{k+1}-W_k\|^2\rB
    ~\Bigg)
    \nonumber\\
    \le&\ V_k
    -\frac{\alpha\gamma\mu q_{\min}}{2}(f(\bar W_k)-f^*)
    + \frac{\alpha\gamma\sigma^2}{q_{\min}}\lnorm G(P_k)\rnorm^2_\infty
    + \alpha^2\gamma^2 L q_{\max}^2 \sigma^2
    \nonumber\\
    &\ + \frac{\beta C\gamma^2q_{\max}}{2}
    \mbE_{\xi_k}[\|P_{k+1} - P^*(W_k)\|^2]
    - \frac{\beta q_{\min}}{2} C\gamma^2
    \|P^*(W_k)-W^\diamond\|^2
    \nonumber\\
    &\ + \lp\saveeq{definition:coefficient-A3}{
        \frac{2}{\alpha\gamma q_{\min}}
        - \frac{C}{4\beta q_{\max}}
        }\rp
    \mathbb{E}_{\xi_k}\lB\|W_{k+1}-W_k\|^2\rB
    \nonumber\\
    \le&\ V_k
    - \underbrace{
        \lp\saveeq{definition:coefficient-A1}{
        \frac{\alpha\gamma\mu q_{\min}}{2}
        - \frac{2}{\mu\gamma^2}\cdot\frac{\beta C\gamma^2q_{\max}}{2}
        - \frac{8\alpha\sigma^2 L_S^2}{\mu\gamma q_{\min}}
        }\rp
    }_{=: A_1}
    (f(\bar W_k)-f^*)
    + \frac{3\alpha^2\gamma^2 L q_{\max}^2 \sigma^2}{2}
    \nonumber\\
    &\ - \underbrace{
        \lp\saveeq{definition:coefficient-A2}{
        \frac{\beta q_{\min}}{2} C\gamma^2
        - \frac{4\alpha\gamma\sigma^2 L_S^2}{q_{\min}}
        }\rp
    }_{=: A_2}
    \|P^*(W_k)-W^\diamond\|^2
    + \underbrace{
        \lp\saveeq{definition:coefficient-A3}{
        \frac{2}{\alpha\gamma q_{\min}}
        - \frac{C}{4\beta q_{\max}}
        }\rp
    }_{=: A_3}
    \mathbb{E}_{\xi_k}\lB\|W_{k+1}-W_k\|^2\rB.
    \nonumber
\end{align}
The coefficient of $(f(\bar W_k)-f^*)$ generated by \eqref{inequality:TT-convergence-Lya-2-T5} is kept in $A_1$, whereas the noise and hardware terms it produces are bounded using
$\frac{2}{\mu\gamma^2}\cdot\frac{\beta C\gamma^2q_{\max}}{2}\le 1$, which holds under a sufficiently small $\beta$ choice below.

Now we instantiate the hyper-parameters according to the following lemma.

\begin{restatable}[Coefficient selection]{lemma}{LemmaTTCoefficientLowerBoundsScvx}
    \label{lemma:TT-coefficient-lower-bounds}
    Under the choice of $\alpha, \beta, C, \gamma$ as follows, 
    \begin{align}
        \label{eq:alpha-beta-choice}
        \alpha
        := \frac{32 q_{\max}}{\gamma\mu q_{\min}^{2} K}
        \log&\lp\frac{\mu^2 q_{\min}^{4} V_0 K}{2048Lq_{\max}^{4}\sigma^2}\rp
        = \tdTheta\lp \frac{q_{\max}}{\gamma \mu q_{\min}^{2}K}\rp,
        \qquad
        \beta := \frac{\alpha\gamma\mu q_{\min}}{16 q_{\max}} = \tdTheta\lp \frac{1}{q_{\min}K}\rp. \\
        \label{eq:C-gamma-choice}
        C :=&\ \frac{16\beta q_{\max}}{\alpha\gamma q_{\min}}
        = \mu,
        \qquad
        \gamma \ge \frac{16L_S\sigma\sqrt{q_{\max}}}{\mu q_{\min}^{3/2}} = \tdOmega\lp\frac{L_S\sigma\sqrt{q_{\max}}}{\mu q_{\min}^{3/2}}\rp,
    \end{align}
    the coefficients defined in \eqref{inequality:TT-convergence-Lya-1} satisfy $A_1 > \rho$, $A_2 > \rho C\gamma^2/2$, and $A_3\le0$ with $\rho := 
    \dfrac{\beta q_{\min}}{2}
    =\dfrac{\alpha\gamma\mu q_{\min}^{2}}{32 q_{\max}}$.
\end{restatable}

The proof of Lemma \ref{lemma:TT-coefficient-lower-bounds} is deferred to \S\ref{section:proof-lemma:TT-coefficient-lower-bounds}.
Lemma \ref{lemma:TT-coefficient-lower-bounds} together with \eqref{inequality:TT-convergence-Lya-1} gives the following one-step contraction
\begin{align}
    \label{inequality:TT-convergence-Lya-5}
    \mbE_{\xi_k}[V_{k+1}]
    \le \lp 1-\dfrac{\alpha\gamma\mu q_{\min}^{2}}{32 q_{\max}}\rp V_k
    + 2\alpha^2\gamma^2 L q_{\max}^2 \sigma^2.
\end{align}
Telescoping \eqref{inequality:TT-convergence-Lya-5} from $k=0$ to $K-1$ and taking full expectation yield
\begin{align}
    \label{inequality:TT-convergence-Lya-6}
    \mbE[V_K]
    \le&\ \lp 1- \frac{\alpha\gamma\mu q_{\min}^{2}}{32 q_{\max}}\rp^K V_0
    + \frac{64\alpha\gamma L q_{\max}^{3} \sigma^2}{\mu q_{\min}^{2}}
    \le \exp\lp - \frac{\alpha\gamma\mu q_{\min}^{2} K}{32q_{\max}}\rp V_0
    + \frac{64\alpha\gamma L q_{\max}^{3} \sigma^2}{\mu q_{\min}^{2}}.
\end{align}
With the choice of $\alpha$ in \eqref{eq:alpha-beta-choice}, inequality \eqref{inequality:TT-convergence-Lya-6} becomes
\begin{align}
    \label{inequality:TT-convergence-Lya-7}
    \mbE[V_K]
    =&\ \exp\lp - \log\lp\frac{\mu^2 q_{\min}^{4} V_0 K}{2048Lq_{\max}^{4}\sigma^2}\rp\rp V_0
    + \frac{2048Lq_{\max}^{4}\sigma^2}{\mu^2 q_{\min}^{4}K}
    \log\lp\frac{\mu^2 q_{\min}^{4} V_0 K}{2048Lq_{\max}^{4}\sigma^2}\rp \\
    =&\ \frac{2048Lq_{\max}^{4}\sigma^2}{\mu^2 q_{\min}^{4}K}
    \lp 1 + \log\lp\frac{\mu^2 q_{\min}^{4} V_0 K}{2048Lq_{\max}^{4}\sigma^2}\rp\rp
    \le
    \tilde\ccalO\lp \frac{\kappa_1 \kappa_2^{4} \sigma^2}{\mu K}\rp.
    \nonumber
\end{align}
The proof of Theorem \ref{theorem:TT-convergence-scvx} is completed.
\end{proof}

Theorem \ref{theorem:TT-convergence-scvx} claims that {\ResidualLearning} converges in terms of both the upper and lower level objectives of problem {\linktoBPwop}. 
Compared with the classical $\mu$-strongly convex benchmark, {\ResidualLearning} preserves the $\ccalO\lp {\log K}/{K}\rp$ dependence up to logarithmic factors, which matches the rate of {\DigitalSGD} 
\citep{hazan2014beyond,karimi2016linear}.
Theorem \ref{theorem:TT-convergence-scvx} also suggests that the upper bound depends on two constants: $\kappa_1$ and $\kappa_2$. 

\textbf{Dependence on the objective condition number.}
In the classical convergence analysis of gradient-based optimization algorithms, $\kappa_1:= L/\mu$ denotes the \textit{objective condition number}, which characterizes the curvature of the objective landscape. 
If the objective is further twice differentiable, $L$ and $\mu$ are the maximal and minimal eigenvalues of the objective's Hessian. 
\citet{agarwal2009information} established the information-theoretic lower bound $\Omega\lp \sigma^2/(\mu K)\rp$ for stochastic $\mu$-strongly convex optimization. 
Using Polyak-Ruppert averaging, \citet{polyak1992acceleration} showed that the averaged iterate $f(Z_K)-f^*$ attains the optimal function value gap $\ccalO\lp \sigma^2/(\mu K)\rp$, where $Z_K$ denotes the weighted average of the iterates generated by {\DigitalSGD}.
This lower bound depends explicitly on $\mu$ but not on $\kappa_1$, indicating that the dependence on $\mu$ is fundamental, whereas the dependence on $\kappa_1$ can be removed.
However, the average iterate $Z_K$ introduces an additional variable to record the history of the iterates, which also suffers from state-dependent bias. Therefore, we consider only the last-iterate convergence in this work.
Under this setting, \citet{bottou2018optimization} showed that {\DigitalSGD} attains that the last-iterate function value gap $f(W_K)-f^*$ is bounded by a near-matching upper bound $\ccalO\lp \kappa_1\sigma^2/(\mu K)\rp$, with an extra dependency on $\kappa_1$ compared with the lower bound.
Analogously, Theorem \ref{theorem:TT-convergence-scvx} introduces a linear factor in $\kappa_1$, which is comparable to the classical upper bound. 
In light of the discussion above, the dependence on the objective condition number remains improvable via the Polyak-Ruppert averaging counterpart tailored for the biased hardware. We will investigate this in future work.

\textbf{Dependence on the hardware condition number.}
Similar to $\kappa_1$, we investigate the \textit{hardware condition number} $\kappa_2:= q_{\max}/q_{\min}$ as the ratio of the maximum to the minimum response function. 
The bound in Theorem \ref{theorem:TT-convergence-scvx} reveals a significantly stronger dependence on the hardware condition number ($\kappa_2^4$) compared to the objective condition number ($\kappa_1$), which implies that the convergence behavior is more sensitive to state-dependent bias induced by the hardware.
A natural question arises: is this strong dependence on $\kappa_2$ fundamental, or can it be improved by a more refined analysis?
The following theorem provides a lower bound on the convergence rate of {\ResidualLearning} with respect to $\kappa_2$, which suggests that the dependence on $\kappa_2$ is at least polynomial.

\begin{theorem}[Lower bound of the convergence error]
    \label{theorem:lower-bound}
    There exists a hard instance such that, under the same conditions as
    Theorem \ref{theorem:TT-convergence-scvx}, it holds for {\ResidualLearning} with sufficiently large $K$ that
    \begin{align}
        \label{inequality:RL-lower-bound}
        \mathbb{E}\bigl[
            f(W_K + \gamma (P_K-W^\diamond)) - f^*
            + \frac{C\gamma^2}{2}\lVert P^*(W_K)-W^\diamond\rVert^2
            \bigr]
        \ge \tilde\Omega\!\left(
            \frac{\kappa_2^2\sigma^2}{\mu K}
        \right).
    \end{align}
\end{theorem}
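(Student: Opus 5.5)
We will construct a one-dimensional ($D=1$) quadratic hard instance and track the second moment of the lower-level error exactly. The instance is $f(W)=\frac{\mu}{2}(W-W^*)^2$ with additive noise $\nabla f(W;\xi)=\mu(W-W^*)+\sigma\xi$, where $\xi$ is Rademacher. This gives $L=\mu$, so $\kappa_1=1$, and isolates the dependence on $\kappa_2$. For the response functions we place $W^*=W^\diamond$ and take $q_\pm$ with $q_+(w^\diamond)=q_-(w^\diamond)=q_{\min}$ on a neighborhood. The functions rise to $q_{\max}$ far away, so that $q_{\max}/q_{\min}=\kappa_2$ and Definition~\ref{assumption:response-factor} holds with some $L_S$. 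The idea is that near the optimum the hardware is as slow as it can be: the effective lower-level stepsize is $\alpha q_{\min}$. Meanwhile the prescribed stepsize of Theorem~\ref{theorem:TT-convergence-scvx}, $\alpha=\Theta(q_{\max}\log K/(\gamma\mu q_{\min}^2K))$, is tuned with a factor $q_{\max}/q_{\min}^2$ that leaves the stationary noise floor large by a factor $\kappa_2^2$.

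\textbf{Step 1: linearize near the symmetric point.} Write $e_k:=\bar W_k-W^*$ and $p_k:=P_k-W^\diamond$. On the neighborhood where $q_\pm\equiv q_{\min}$ we have $G\equiv 0$ and $F\equiv q_{\min}$, so \eqref{recursion:HD-P}--\eqref{recursion:HD-W} become linear. They read $p_{k+1}=p_k-\alpha q_{\min}(\mu e_k+\sigma\xi_k)$ and $W_{k+1}=W_k+\beta\gamma q_{\min}p_{k+1}$, which give
\begin{align*}
e_{k+1}=e_k-\alpha\gamma q_{\min}(1+\beta q_{\min})(\mu e_k+\sigma\xi_k)+\beta\gamma q_{\min}\,p_k .
\end{align*}
We will initialize at $W_0=W^*$, $P_0=W^\diamond$. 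We then argue with high probability that the iterates stay in the neighborhood: the displacement has variance $\tilde O(\alpha\gamma\sigma^2/\mu)\to0$, and we can choose the neighborhood radius as a constant. On the complementary event the objective is nonnegative anyway, so a lower bound is preserved. Alternatively, we can take $q_\pm$ constant on a large interval and truncate.

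\textbf{Step 2: second-moment recursion.} Since $\beta\gamma q_{\min}=\Theta(\alpha\gamma^2\mu q_{\min}^2/q_{\max})$ is of smaller order, the $2\times2$ linear system for $(\mathbb{E}e_k^2,\mathbb{E}e_kp_k,\mathbb{E}p_k^2)$ has contraction factor $1-\Theta(\alpha\gamma\mu q_{\min})$ for $e$. The injected noise per step is $\alpha^2\gamma^2q_{\min}^2\sigma^2$. Hence the stationary level satisfies
\begin{align*}
\mathbb{E}e_K^2\gtrsim \frac{\alpha^2\gamma^2q_{\min}^2\sigma^2}{\alpha\gamma\mu q_{\min}}=\frac{\alpha\gamma q_{\min}\sigma^2}{\mu},
\end{align*}
and it is reached after $K\gg 1/(\alpha\gamma\mu q_{\min})$ steps. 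This holds for large $K$, because $\alpha\gamma\mu q_{\min}K=\Theta(\kappa_2\log K)$. The lower bound on $\mathbb{E}e_K^2$ follows even more simply by conditioning on the past: the last few noise contributions are independent of everything before them. Plugging in $\alpha\gamma=\Theta(q_{\max}\log K/(\mu q_{\min}^2K))$ gives $\mathbb{E}[f(\bar W_K)-f^*]=\frac{\mu}{2}\mathbb{E}e_K^2\gtrsim \kappa_2\sigma^2\log K/(\mu K)$. This yields only one power of $\kappa_2$.

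\textbf{Step 3: the main obstacle, getting $\kappa_2^2$.} The remaining factor must come from the noise magnitude seen by the hardware when the device is \emph{fast} rather than slow. So we modify the instance: $q_\pm(w^\diamond)=q_{\max}$ at the symmetric point, while the parameter $q_{\min}$ entering the stepsize is still the global minimum, attained away from the trajectory. Redoing Step 2 with $F\equiv q_{\max}$, the floor becomes $\alpha\gamma q_{\max}\sigma^2/\mu$, i.e.
\begin{align*}
\frac{q_{\max}^2\sigma^2\log K}{\mu^2 q_{\min}^2 K},
\end{align*}
which after multiplying by $\mu$ is $\tilde\Omega(\kappa_2^2\sigma^2/(\mu K))$. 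We must check three things. First, the contraction factor $1-\Theta(\alpha\gamma\mu q_{\max})$ is still valid (stability), i.e.\ $\alpha\gamma\mu q_{\max}<1$; this holds for large $K$. Second, the upper-level coupling through $\beta$ does not cancel the noise; it cannot, because it only adds a nonnegative quantity to the Lyapunov term $\frac{C\gamma^2}{2}\|P^*(W_K)-W^\diamond\|^2$. Third, $q_\pm$ can decrease to $q_{\min}$ away from $w^\diamond$ while remaining $L_S$-Lipschitz; this holds with $L_S$ set to any constant compatible with the $\gamma$ lower bound. Finally, since the left-hand side of \eqref{inequality:RL-lower-bound} is at least its lower-level part, the bound follows.
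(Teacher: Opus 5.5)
Your mechanism matches the paper's. You use a one-dimensional quadratic with $L=\mu$ and place the symmetric point at the response ceiling. The lower-level effective step is then $\alpha q_{\max}$, while the prescribed $\alpha$ carries $q_{\max}/q_{\min}^2$, leaving a floor of order $\alpha\gamma q_{\max}\sigma^2=\tilde{\Theta}(\kappa_2^2\sigma^2/(\mu K))$.

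Your route differs. A flat plateau ($F\equiv q_{\max}$, $G\equiv 0$ near $w^\diamond$) makes the recursion exactly linear, so symmetric Rademacher noise suffices. The paper only pins $q_\pm(0)=q_{\max}$, so $G(P_k)\neq 0$ nearby. It therefore needs state-dependent skewed two-point noise to evaluate the conditional mean of the analog increment. It then needs a global mean bound with a variance term switched on by $\mathbf{1}_{\mathcal{E}_k}$, and confinement with probability at least $1/2$ via Doob's inequality on a compensated Lyapunov supermartingale built from the upper-bound drift. The paper's construction tolerates a non-flat ceiling and reuses the upper-bound analysis. Yours, once completed, is an explicit linear computation with no probabilistic confinement at all.

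Three steps need repair.

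\emph{Confinement.} Dropping $\mathcal{E}^c$ gives only $\mathbb{E}[V_K]\ge\mathbb{E}[V_K\mathbf{1}_{\mathcal{E}}]$, but your second moment is computed for the unrestricted linear process. Without a tail or fourth-moment argument, the restriction may remove precisely that mass. Also, a per-$k$ variance bound does not control exit along the whole path. Your ``large plateau'' alternative fixes this, but you must supply the reason it works. Set $u_k:=W_k-W^*$, $e_k:=\bar W_k-W^*$, $a:=\alpha\gamma\mu q_{\max}$ and $b:=\beta q_{\max}=a/(16\kappa_2)$. On the plateau, $(u_{k+1},e_{k+1})^\top=M(u_k,e_k)^\top+c\,\xi_k$ with $\det M=1-a$ and $\operatorname{tr}M=2-a(1+b)$. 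So the eigenvalues are $1-\Theta(a)$ and $1-\Theta(b)$. The eigenvectors are those of $(M-I)/a$, which tends to a fixed matrix with distinct eigenvalues, so their conditioning is bounded uniformly in $K$. Since $|\xi_k|=1$ and $\|c\|/b\approx\alpha\gamma\sigma/\beta=16\kappa_2\sigma/\mu$ is $K$-independent, every path stays in a fixed ball, including $P_k-W^\diamond=(e_k-u_k)/\gamma$. A plateau containing that ball makes the dynamics linear deterministically.

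\emph{Coupling.} Nonnegativity of the upper-level term is irrelevant, because the $\beta$-coupling acts inside the $e$-recursion. It is also not of smaller order in $K$, only smaller by the factor $1/(16\kappa_2)$. Instead use $\mathbb{E}e_K^2\ge\operatorname{Var}(e_K)=\sum_m [M^m c]_2^2$. Since $\|M-I\|\le 2a$, $M^m$ stays within a small constant of $I$ for $m\le 1/(8a)$. Hence each of the last $\Theta(1/a)$ noise terms, not ``few'', contributes $\gtrsim\alpha^2\gamma^2q_{\max}^2\sigma^2$. This needs $K\gtrsim 1/a$, which holds because $aK=\Theta(\kappa_2^2\log K)$.

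\emph{Initialization.} $W_0=W^*$, $P_0=W^\diamond$ gives $V_0=0$. The explicit stepsize in the coefficient-selection lemma contains $\log V_0$, so it becomes undefined. Use a nonzero $K$-independent offset, as the paper does; this does not affect the variance bound.
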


\begin{proof}[Proof]
At the center of the proof is a one-dimensional hard instance, analyzed in three stages.
First, we construct the instance, which is a strongly convex quadratic driven by skewed two-point noise.
Whenever the current iterate lies in the confinement region, the conditional mean of the analog increment has an explicit form; this calculation is isolated in Lemma~\ref{lemma:conditional-mean-analog-increment}.
Second, Lemma~\ref{lemma:terminal-mean-square-lower-bound} combines a global lower bound on the conditional-mean dynamics with the variance generated while the current iterate is confined.
This filtration-adapted estimate does not condition the noise on the future confinement event.
Third, Lemma~\ref{lemma:confinement} shows that every current iterate is confined with constant probability.
Taking total expectations in the one-step estimate and unrolling it then gives \eqref{inequality:RL-lower-bound}.

\medskip
\noindent\textbf{The hard instance and skewed noise.}
Consider the quadratic objective $f(W) = \frac{\mu}{2}(W-W^\star)^2$.
For this one-dimensional quadratic, the smoothness and strong-convexity constants coincide, so $L=\mu$ throughout the construction.
We consider the case where the inner and outer variables have different response functions. The outer variable is realized on a symmetric array with constant response $q_+(\cdotc)\equiv q_-(\cdotc)\equiv q_{\min}$ ($F(W)\equiv q_{\min}$, $G(W)\equiv0$). 
For the inner variable, we place the symmetric point $W^\diamond=0$ at the ceiling of the response, i.e., 
\begin{align}
    \label{eq:symmetric-point-at-ceiling}
    q_+(0)=q_-(0)=q_{\max}
    \qquad\Longleftrightarrow\qquad
    F(0)=q_{\max},\quad G(0)=0 ,
\end{align}
while the response attains $q_{\min}$ at some other state. Under this setting, \texttt{Residual Learning} reduces to
\begin{align}
    \label{eq:residual-learning-iterations}
    P_{k+1}
    =&\
    P_k-\alpha F(P_k)\bigl(\mu(\bar W_k-W^\star)+\xi_k\bigr)
    -\alpha|\mu(\bar W_k-W^\star)+\xi_k|~G(P_k),\\
    \label{eq:quadratic-example-simplified-W-update}
    W_{k+1}
    =&\
    W_k+\beta\gamma q_{\min} P_{k+1}.
\end{align}

The key step in proving the convergence lower bound is to analyze
$\mathbb{E}_{\xi_k}[F(P_k)\nabla f(\bar W_k;\xi_k)+|\nabla f(\bar W_k;\xi_k)|G(P_k)]$, where the current iterates $P_k$ and $\bar W_k$ are fixed and the expectation is taken only over the fresh noise $\xi_k$.
This is generally difficult because the analog operator applies a sign-dependent response to the stochastic gradient.
In our construction, the two-point noise is chosen so that the analog expectation can be evaluated explicitly whenever the deterministic gradient is confined to a sufficiently small range.
To formalize the one-step condition, define event $\mathcal{E}_k$ for every $0\le k\le K$, as well as the full-horizon confinement event $\mathcal{E}$, by
\begin{align}
    \label{eq:confinement-event-k}
    \mathcal{E}_k
    :=
    \left\{
    \max\bigl\{|\bar W_k-W^\star|,\,|W_k-W^\star|\bigr\}
    \le
    \frac{\sigma}{\mu}\sqrt{\frac{q_{\min}}{q_{\max}}}
    \right\}
    \quad\text{and}\quad
    \mathcal{E}
    :=
    \bigcap_{k=0}^{K}\mathcal{E}_k.
\end{align}
On this event, the quadratic structure gives
$|\nabla f(\bar W_k)|=\mu|\bar W_k-W^\star|
\le\sigma\sqrt{q_{\min}/q_{\max}}$ for every $0\le k\le K$, and the inner state
obeys
$|P_k|=\tfrac1\gamma|\bar W_k-W_k|
\le\tfrac1\gamma\bigl(|\bar W_k-W^\star|+|W_k-W^\star|\bigr)
\le\tfrac{2\sigma}{\mu\gamma}\sqrt{q_{\min}/q_{\max}}$.
Thus, if the two noise atoms have magnitudes at least
$\sigma\sqrt{q_{\min}/q_{\max}}$, the sign of each stochastic gradient
$\nabla f(\bar W_k;\xi_k)$ is determined by the corresponding noise atom. 
This removes the absolute value in the effective gradient increment and yields a closed-form conditional expectation.
Inspired by that, define the stochastic gradient by $\nabla f(\bar W_k; \xi_k) = \nabla f(\bar W_k)+\xi_k$, and set
\begin{align}
    \label{eq:skewed-noise}
    \xi_k
    :=
    \begin{cases}
        \sigma\sqrt{\frac{1-p_k}{p_k}},
        &\text{with probability }p_k,\\
        -\sigma\sqrt{\frac{p_k}{1-p_k}},
        &\text{with probability }1-p_k,
    \end{cases}
    ~~\text{where }
    p_k
    :=
    \frac{q_+(P_k)}{q_+(P_k)+q_-(P_k)}.
\end{align}
Conditional on the current iterates, it can be verified directly that $\mathbb{E}_{\xi_k}[\xi_k]
    =
    0,
    \mathbb{E}_{\xi_k}[\xi_k^2]
    =
    \sigma^2$ and Assumption~\ref{assumption:noise} holds.
In addition, since $p_j=\dfrac{q_+(P_j)}{q_+(P_j)+q_-(P_j)}$ and $1-p_j=\dfrac{q_-(P_j)}{q_+(P_j)+q_-(P_j)}$, the two endpoint magnitudes
satisfy
\begin{align}
    \label{eq:trajectory-sign-thresholds}
    \sigma\sqrt{\frac{1-p_j}{p_j}}
    &=
    \sigma\sqrt{\frac{q_-(P_j)}{q_+(P_j)}}
    \ge
    \sigma\sqrt{\frac{q_{\min}}{q_{\max}}},
    \quad
    \sigma\sqrt{\frac{p_j}{1-p_j}}
    =
    \sigma\sqrt{\frac{q_+(P_j)}{q_-(P_j)}}
    \ge
    \sigma\sqrt{\frac{q_{\min}}{q_{\max}}},
\end{align}
where the inequalities use the regularity of the response functions
(Definition~\ref{assumption:response-factor}). Therefore, whenever
$\mathcal{E}_k$ occurs,
\begin{align}
    \label{eq:conditional-absolute-value-sign-condition}
    -\sigma\sqrt{\frac{1-p_k}{p_k}}
    \le
    \nabla f(\bar W_k)
    \le
    \sigma\sqrt{\frac{p_k}{1-p_k}},
\end{align}
and consequently $\nabla f(\bar W_k) + \sigma\sqrt{\frac{1-p_k}{p_k}} \ge 0$ and $\nabla f(\bar W_k) - \sigma\sqrt{\frac{p_k}{1-p_k}} \le 0$. This property enables us to write the conditional expectation of the increment in closed form, as stated in the following lemma.

\begin{lemma}[Conditional mean of the analog increment]
    \label{lemma:conditional-mean-analog-increment}
    Fix $P_k$ and $\bar W_k$. If the sign condition \eqref{eq:conditional-absolute-value-sign-condition} holds, then the expectation over the fresh noise $\xi_k$ satisfies
    \begin{align}
        \label{eq:quadratic-example-A-mean}
        \mathbb{E}_{\xi_k}\bigl[
            -A\bigl(-\nabla f(\bar W_k;\xi_k);P_k\bigr)
        \bigr]
        =
        \frac{q_+(P_k)q_-(P_k)}{F(P_k)}\,\nabla f(\bar W_k)
        +\sigma\frac{\sqrt{q_+(P_k)q_-(P_k)}}{F(P_k)}\,G(P_k).
    \end{align}
\end{lemma}
The proof of Lemma \ref{lemma:conditional-mean-analog-increment} is deferred to \S\ref{section:proof-lemma:conditional-mean-analog-increment}.
By \eqref{eq:conditional-absolute-value-sign-condition}, the sign condition
needed by Lemma~\ref{lemma:conditional-mean-analog-increment} holds on $\mathcal{E}_k$. Consequently,
\begin{align}
    \label{eq:conditional-mean-recursion-P}
    \mathbb{E}_{\xi_k}\bigl[
        P_{k+1}
    \bigr]
    =&\
    P_k
    -\alpha
    \frac{q_+(P_k)q_-(P_k)}{F(P_k)}
    \mu(\bar W_k-W^\star)
    -\alpha\sigma
    \frac{\sqrt{q_+(P_k)q_-(P_k)}}{F(P_k)}
    G(P_k).
\end{align}
Substituting \eqref{eq:conditional-mean-recursion-P} into
\eqref{eq:quadratic-example-simplified-W-update} gives
\begin{align}
    \label{eq:quadratic-example-W-dynamics}
    \mathbb{E}_{\xi_k}\bigl[
        W_{k+1}-W^\star
    \bigr]
    =&\
    \Bigl(1-\alpha\beta\gamma q_{\min} \mu \frac{q_+(P_k)q_-(P_k)}{F(P_k)}\Bigr)(W_k-W^\star)
    \\
    &\
    +\beta\gamma q_{\min}\Bigl(1-\alpha\gamma \mu \frac{q_+(P_k)q_-(P_k)}{F(P_k)}\Bigr)P_k
    -\alpha\beta\gamma q_{\min}\sigma
    \frac{\sqrt{q_+(P_k)q_-(P_k)}}{F(P_k)}
    G(P_k).
    \nonumber
\end{align}
The inverse shear
$\beta\gamma q_{\min}P_k=\beta q_{\min}\bigl((\bar W_k-W^\star)-(W_k-W^\star)\bigr)$, together with \eqref{eq:conditional-mean-recursion-P} and \eqref{eq:quadratic-example-W-dynamics}, allows us to express the conditional means of $W_{k+1}-W^\star$ and $\bar W_{k+1}-W^\star$ as linear functions of the pair $(W_k-W^\star,\,\bar W_k-W^\star)$. The effective response
$\tfrac{q_+(P_k)q_-(P_k)}{F(P_k)}=\tfrac{2q_+(P_k)q_-(P_k)}{q_+(P_k)+q_-(P_k)}$ is the harmonic mean of $q_+(P_k)$ and $q_-(P_k)$, so the resulting two-dimensional linear recursion is
\begin{align}
    \label{eq:pair-W-mean}
    \mathbb{E}_{\xi_k}\bigl[W_{k+1}-W^\star\bigr]
    =&\
    \Bigl(1-\beta q_{\min}\Bigr)(W_k-W^\star)
    +\beta q_{\min}\Bigl(1-\alpha\gamma\mu \frac{q_+(P_k)q_-(P_k)}{F(P_k)}\Bigr)(\bar W_k-W^\star)
    \bkeqwn
    -\alpha\beta\gamma q_{\min}\sigma\frac{\sqrt{q_+(P_k)q_-(P_k)}}{F(P_k)}\,G(P_k), \nonumber\\
    \label{eq:pair-barW-mean}
    \mathbb{E}_{\xi_k}\bigl[\bar W_{k+1}-W^\star\bigr]
    =&\ 
    \Bigl(1+\beta q_{\min}-\alpha\gamma(1+\beta q_{\min})\mu \frac{q_+(P_k)q_-(P_k)}{F(P_k)}\Bigr)(\bar W_k-W^\star)
    \\
    &\ 
    -\beta q_{\min}(W_k-W^\star)
    -\alpha\gamma(1+\beta q_{\min})\sigma\frac{\sqrt{q_+(P_k)q_-(P_k)}}{F(P_k)}\,G(P_k).
    \nonumber
\end{align}
Stacking the scaled pair $W_k-W^\star$ and $\bar W_k-W^\star$ into a two-dimensional vector, we define 
\begin{align}
    \label{eq:pair-R-def}
    R_k := \sqrt{\frac{\mu}{2}}\,
    \Bigl[\,\Bigl(\tfrac{q_{\min}}{q_{\max}}\Bigr)^{1/4}(W_k-W^\star),\ \bar W_k-W^\star\,\Bigr]^\top .
\end{align}

\medskip
\noindent\textbf{Mean-square lower bound.}
The preceding displays characterize the conditional mean dynamics on $\mathcal{E}_k$.
To avoid conditioning the fresh noise on the future event $\mathcal{E}$, the next lemma uses a global conditional-mean bound and invokes confinement only for the conditional variance.
The indicator in the resulting estimate is measurable before $\xi_k$ is drawn.

\begin{lemma}[One-step mean-square lower bound]
    \label{lemma:terminal-mean-square-lower-bound}
    For the hard instance above and the same setting as Theorem \ref{theorem:lower-bound}, it holds for sufficiently large $K$ and every $0\le k\le K-1$ that
    \begin{align}
        \label{eq:terminal-mean-square-lower-bound}
        \mathbb{E}_{\xi_k}\bigl[\lVert R_{k+1}\rVert^2\bigr]
        \ge&\
        \left(1-12\alpha\mu\gamma q_{\max}\right)\lVert R_k\rVert^2
        +\frac{\mu}{8}\alpha^2\gamma^2q_{\max}^2\sigma^2
        \mathbf{1}_{\mathcal{E}_k}.
    \end{align}
    Here, $\mathbf{1}_{\mathcal{E}_k}$ denotes the indicator of the one-step confinement event $\mathcal{E}_k$ defined in \eqref{eq:confinement-event-k}.
\end{lemma}
The proof of Lemma \ref{lemma:terminal-mean-square-lower-bound} is deferred to \S\ref{section:proof-lemma:terminal-mean-square-lower-bound}.
The remaining step is to verify that the indicator in \eqref{eq:terminal-mean-square-lower-bound} is active with constant probability at every iteration.

\begin{lemma}[Constant-probability confinement]
    \label{lemma:confinement}
    For the one-dimensional hard instance, choose
    \begin{align}
        \label{eq:initial-lyapunov-choice}
        P_0
        &:=
        0,
        \qquad
        W_0
        :=
        W^\star
        +\frac{\sigma}{4\mu}
        \left(\frac{q_{\min}}{q_{\max}}\right)^{3/4}.
    \end{align}
    Under this initialization, the event $\mathcal{E}$ in \eqref{eq:confinement-event-k} satisfies $\mathbb{P}(\mathcal{E})\ge\frac{1}{2}$ for sufficiently large $K$.
\end{lemma}
The proof of Lemma \ref{lemma:confinement} is deferred to \S\ref{section:proof-lemma:confinement}.
We now take the total expectations in \eqref{eq:terminal-mean-square-lower-bound}.
By definition \eqref{eq:confinement-event-k}, $\mathcal{E}\subseteq\mathcal{E}_k$ for every $0\le k\le K-1$.
Hence Lemma~\ref{lemma:confinement} implies $\mathbb{E}[\mathbf{1}_{\mathcal{E}_k}]=\mathbb{P}(\mathcal{E}_k)\ge\mathbb{P}(\mathcal{E})\ge1/2$.
The coefficient in the resulting scalar recursion is nonnegative for sufficiently large $K$, as established in the proof of Lemma~\ref{lemma:terminal-mean-square-lower-bound}.
Dropping the nonnegative initial term, unrolling the scalar recursion from $k=0$ to $K-1$, and applying the step size choice \eqref{eq:alpha-beta-choice} yield
\begin{align}
    \label{eq:pair-floor-rate}
    \mathbb{E}\bigl[\lVert R_K\rVert^2\bigr]
    \ge
    \frac{\mu}{16}\alpha^2\gamma^2q_{\max}^2\sigma^2
    \sum_{k=0}^{K-1}
    \left(1-12\alpha\mu\gamma q_{\max}\right)^k
    \ge
    \frac{\alpha\gamma q_{\max}\sigma^2}{192}
    = \tilde\Omega\!\lp
        \frac{\kappa_2^2\sigma^2}{\mu K}
    \rp.
\end{align}
Finally, since $W^\diamond=0$, $\bar W_K=W_K+\gamma(P_K-W^\diamond)$, $P^*(W_K)=(W^\star-W_K)/\gamma$, and $C=\mu$, the hard instance satisfies
\begin{align}
    \label{eq:pair-objective-identity}
    \mathbb{E}\bigl[V_K\bigr]
    =&\
    \mathbb{E}\lB\,
        \frac{\mu}{2}(\bar W_K-W^\star)^2
        +\frac{\mu}{2}(W_K-W^\star)^2
    \,\rB
    \\
    \ge&\
    \mathbb{E}\lB\,
        \frac{\mu}{2}(\bar W_K-W^\star)^2
        +\frac{\mu}{2}\sqrt{\frac{q_{\min}}{q_{\max}}}(W_K-W^\star)^2
    \,\rB
    =\mathbb{E}\bigl[\lVert R_K\rVert^2\bigr].
    \nonumber
\end{align}
Combining \eqref{eq:pair-floor-rate} and \eqref{eq:pair-objective-identity} proves \eqref{inequality:RL-lower-bound}.
\end{proof}

Theorem \ref{theorem:lower-bound} complements the upper bound in Theorem \ref{theorem:TT-convergence-scvx} by showing that the hardware dependence cannot, in general, be removed from the rate.
In particular, even on a favorable problem class where {\ResidualLearning} achieves the optimal $\log K/K$ iteration dependence up to constants, the constant factor must still deteriorate polynomially with $\kappa_2$ in the worst case.
Therefore, the gap between the $\kappa_2^4$ upper bound and the $\kappa_2^2$ lower bound remains open, but the lower bound indicates that a $\kappa_2$-independent guarantee is impossible.
This result further suggests that $\kappa_2$ serves as an intrinsic measure of the hardware-induced optimization difficulty.
It remains an open question to design an optimal algorithm in terms of $\kappa_2$ and to determine the tight dependence on $\kappa_2$ for this class of problems in future work.

\begin{figure*}[t]
    \footnotesize
    \centering
    \includegraphics[width=0.32\linewidth]{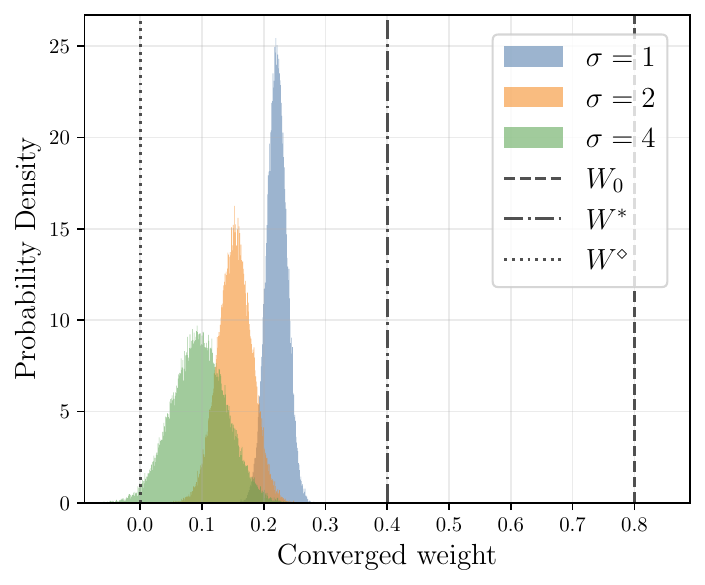}
    \includegraphics[width=0.32\linewidth]{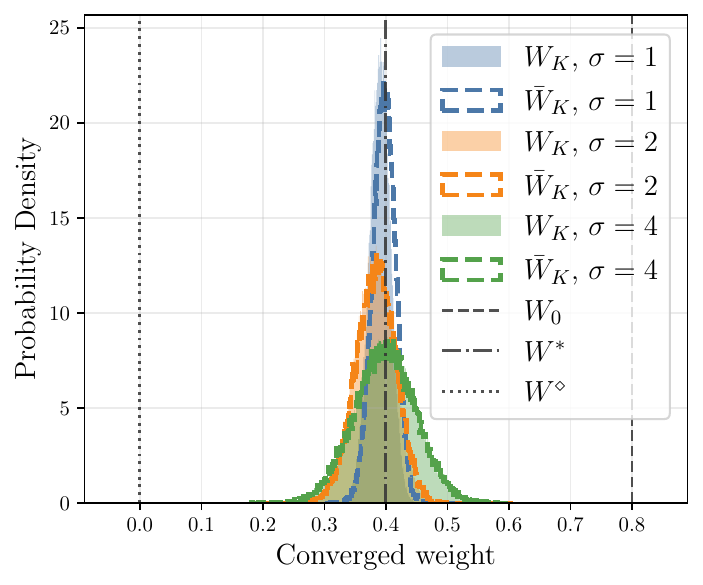}
    \includegraphics[width=0.32\linewidth]{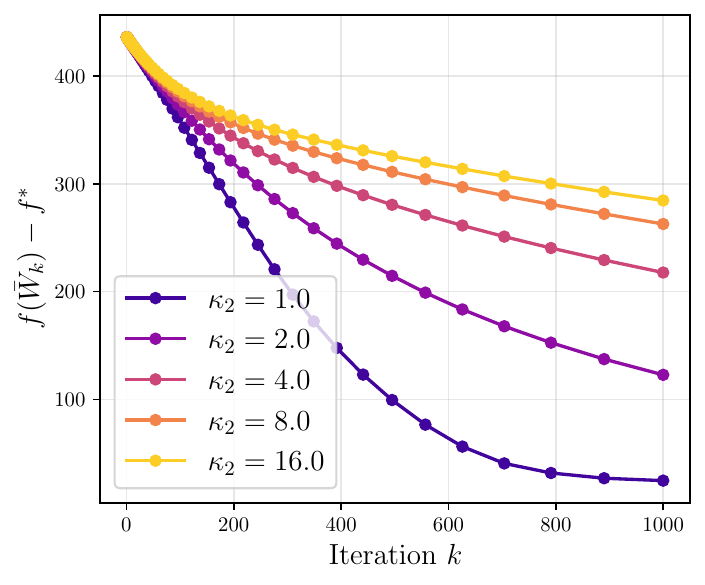}
    \caption{Numerical illustrations of the implicit penalty and the effect of the hardware condition number. \textbf{(Left)} {\AnalogSGD} and \textbf{(Middle)} {\ResidualLearning}: Probability density of iterates after convergence in a 1D quadratic toy example with power response under different noise variances. 
    \textbf{(Right)}: Error curves of {\ResidualLearning} under different hardware condition numbers $\kappa_2$.}
    \label{figure:toy-and-kappa}
\end{figure*}

\begin{figure*}[t]
    \footnotesize
    \centering
    \includegraphics[width=.49\linewidth]{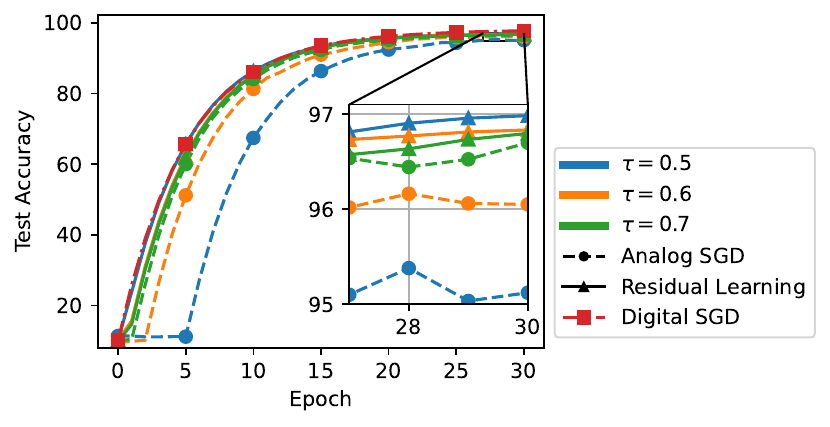}
    \includegraphics[width=0.49\linewidth]{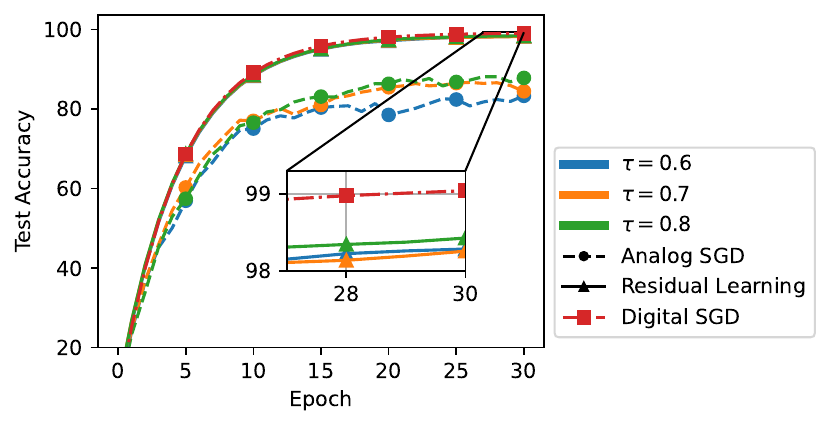}
        \vspace{-1em}
    \caption{Test accuracy curves for the models with the \textit{power response} on the MNIST dataset under different $\tau$; \textbf{(Left)} FCN. \textbf{(Right)} CNN. 
    }
    \label{figure:FCN-CNN-MNIST}
\end{figure*}

\section{Numerical Simulations}
\label{section:experiments}

In this section, we verify the theoretical results via simulations on both synthetic datasets and real datasets.
We use the open source toolkit \ac{AIHWKIT}~\citep{Rasch2021AFA} to simulate \AnalogSGD~and {\ResidualLearning}.
Each simulation is repeated three times, and the mean and standard deviation of the results are reported. 
The code of our simulation is available at \url{github.com/Zhaoxian-Wu/analog-training}.
We consider two types of response functions in our simulations: power and exponential responses.
The \emph{power response} is a power function, given by
\begin{align}
  q_{+}(w) = \lp 1 - \frac{w}{\tau}\rp^{\gamma_{\texttt{res}}}
    , 
    \quad\quad
  q_{-}(w) = \lp 1 + \frac{w}{\tau}\rp^{\gamma_{\texttt{res}}}
\end{align}
whose curves are determined by the dynamics radius $\tau$ and shape parameter $\gamma_{\texttt{res}}$.
The larger $\tau$ and $\gamma_{\texttt{res}}$ are, the higher the asymmetry degree is.
We also consider the \emph{exponential response}, whose response is an exponential function with the symmetric point 0, that is
\begin{align}
  q_{+}(w) = \frac{\exp\lp \gamma_{\texttt{res}}(1 - {w}/{\tau})\rp-1}{\exp\lp \gamma_{\texttt{res}}\rp-1}
    , 
    \quad\quad
  q_{-}(w) = \frac{\exp\lp \gamma_{\texttt{res}}(1 + {w}/{\tau})\rp-1}{\exp\lp \gamma_{\texttt{res}}\rp-1}.
\end{align}

\begin{table*}[b]
    \centering
    \begin{tabular}{c|c|c|c|c|c|c}
        \toprule
         & \multicolumn{3}{c|}{CIFAR10} & \multicolumn{3}{c}{CIFAR100} \\ 
         \cmidrule{2-4}\cmidrule{5-7}
         & \texttt{DSGD} & \texttt{ASGD} & \texttt{RL} 
         & \texttt{DSGD} & \texttt{ASGD} & \texttt{RL} \\ 
        \midrule
        ResNet18  & 95.43\stdv{$\pm$0.13} & 84.47\stdv{$\pm$3.40} & 94.81\stdv{$\pm$0.09} & 81.12\stdv{$\pm$0.25} & 68.98\stdv{$\pm$1.01} & 76.17\stdv{$\pm$0.23} \\
        ResNet34  & 96.48\stdv{$\pm$0.02} & 95.43\stdv{$\pm$0.12} & 96.29\stdv{$\pm$0.12} & 83.86\stdv{$\pm$0.12} & 78.98\stdv{$\pm$0.55} & 80.58\stdv{$\pm$0.11} \\
        ResNet50  & 96.57\stdv{$\pm$0.10} & 94.36\stdv{$\pm$1.16} & 96.34\stdv{$\pm$0.04} & 83.98\stdv{$\pm$0.11} & 79.88\stdv{$\pm$1.26} & 80.80\stdv{$\pm$0.22} \\
        \bottomrule
    \end{tabular}
    \caption{Test accuracy of fine-tuning ResNet models with the \emph{power response} on CIFAR10/100 dataset. 
    \texttt{DSGD}, \texttt{ASGD}, and \texttt{RL} represent \DigitalSGD, \AnalogSGD, {\ResidualLearning}, respectively.
    }
    \label{table:CIFAR-fine-tune-pow}
    \vspace{-1\baselineskip}
\end{table*}
\begin{table*}[b]
    \centering
    \begin{tabular}{c|c|c|c|c|c|c}
        \toprule
            & \multicolumn{3}{c|}{CIFAR10} & \multicolumn{3}{c}{CIFAR100} \\
            \cmidrule{2-4}\cmidrule{5-7}
            & \texttt{DSGD} & \texttt{ASGD} & \texttt{RL} 
            & \texttt{DSGD} & \texttt{ASGD} & \texttt{RL} \\
        \midrule
            ResNet18  & 95.43\stdv{$\pm$0.13} & 94.66\stdv{$\pm$0.11} & 94.70\stdv{$\pm$0.07} & 81.12\stdv{$\pm$0.25} & 73.55\stdv{$\pm$0.22} & 74.64\stdv{$\pm$0.24} \\
            ResNet34  & 96.48\stdv{$\pm$0.02} & 96.19\stdv{$\pm$0.04} & 96.24\stdv{$\pm$0.08} & 83.86\stdv{$\pm$0.12} & 78.10\stdv{$\pm$0.24} & 79.05\stdv{$\pm$0.21} \\
            ResNet50  & 96.57\stdv{$\pm$0.10} & 96.53\stdv{$\pm$0.06} & 96.40\stdv{$\pm$0.13} & 83.98\stdv{$\pm$0.11} & 81.40\stdv{$\pm$0.36} & 79.75\stdv{$\pm$0.10} \\
        \bottomrule
            \end{tabular}
    \caption{Test accuracy of fine-tuning ResNet models with the \emph{exponential response} on CIFAR10/100. 
    \texttt{DSGD}, \texttt{ASGD}, and \texttt{RL} represent \DigitalSGD, \AnalogSGD, {\ResidualLearning}, respectively.
    }
    \label{table:CIFAR-fine-tune-exp}
\end{table*}

\subsection{Verification of the implicit penalty}
We provide empirical verification of Theorem \ref{theorem:implicit-regularization}, which claims that an implicit penalty is imposed by the persistent state-dependent bias.

\noindent\textbf{1D toy example.}
We consider a 1D quadratic objective with optimal point $W^*$ to illustrate the implicit penalty. We construct stochastic gradients by injecting Gaussian noise with different variances and use a power response with $\gamma_{\texttt{res}}=1$ and $\tau=1$. Starting from the same initial point $W_0$, we plot the probability density of the iterates after $K=6000$ iterations in Fig.~\ref{figure:toy-and-kappa} (Left and Middle). The results confirm that \AnalogSGD~is attracted toward the symmetric point of the device (rather than $W^*$), and the drift amplifies as the noise variance increases, which validates the implicit penalty effect predicted by Theorem~\ref{theorem:implicit-regularization}. In contrast, {\ResidualLearning} concentrates around the optimal point $W^*$, which supports the claim in \ref{theorem:TT-convergence-scvx} that the bilevel construction enables the exact convergence despite the persistent state-dependent bias.

We further examine the weight distributions of models trained with {\DigitalSGD} or {\AnalogSGD} on real datasets. We visualize the weight distributions of models from the ResNet family on CIFAR10/CIFAR100 datasets after training 200 epochs, as shown in Fig. \ref{figure:verification-implicit-bias}.
The figures illustrate that the weights trained by \AnalogSGD~are more concentrated around the symmetric point (set as 0 in the simulations) than those trained by \DigitalSGD, which reflects the impact of the implicit penalty. 

\begin{figure*}[t]
    \footnotesize
    \centering
    \includegraphics[width=.4\linewidth]{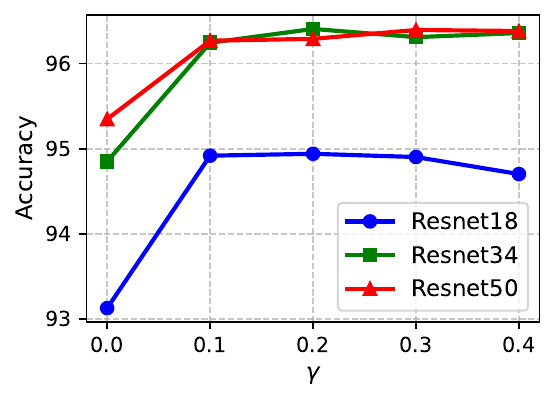}
    \includegraphics[width=0.4\linewidth]{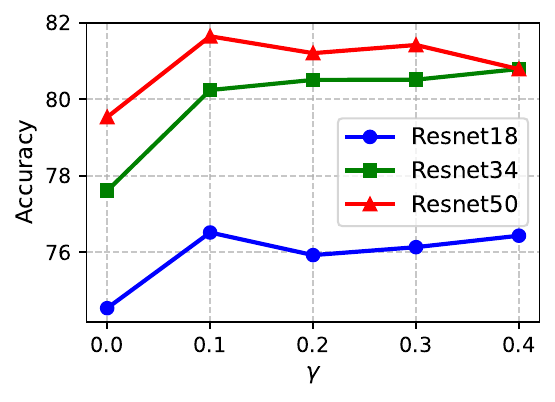}
    \caption{Test accuracy of ResNet models after 100 epochs trained by \ResidualLearning~under different $\gamma$ in \eqref{recursion:HD-P}; \textbf{(Left)} CIFAR10. \textbf{(Right)} CIFAR100. 
    }
    \label{figure:gamma-ablation}
        \vspace{-1em}
\end{figure*}

\subsection{Effect of hardware condition number}
\label{section:experiments-kappa}
We study the effect of the hardware condition number $\kappa_2 := q_{\max}/q_{\min}$ on the convergence speed of {\ResidualLearning}. We consider a $d=1024$-dimensional diagonal quadratic objective $f(W) = \tfrac{1}{2}(W - W^*)^\top H (W - W^*)$, where the components of $\operatorname{diag}(H)$ and $W^*$ are drawn uniformly from $[0.5, 4.0]$ and $[0.5, 1.5]$, respectively. To vary $\kappa_2$ in a controlled manner, we project the weights onto the dynamic range $[-\tau, \tau]$ with $\tau=1$ at each iteration, and parametrize the response functions as $q_+(w) = 1 - \delta \cdot w/\tau$ and $q_-(w) = 1 + \delta \cdot w/\tau$, where $\delta = (\kappa_2 - 1)/(\kappa_2 + 1)$; these functions are bounded within $[1-\delta, 1+\delta]$ and satisfy $q_{\max}/q_{\min} = \frac{1+\delta}{1-\delta} = \kappa_2$ by construction. We sweep $\kappa_2$ over $\{1, 2, 4, 8, 16\}$ with fixed step sizes $\alpha = \beta = 10^{-3}$, and run for $K=1000$ iterations. The convergence curves are presented in Fig.~\ref{figure:toy-and-kappa} (Right). The results demonstrate that increasing $\kappa_2$ consistently degrades the convergence rate, corroborating the polynomial dependence on $\kappa_2$ established in Theorem~\ref{theorem:TT-convergence-scvx}.

\subsection{Training with persistent state-dependent bias on real datasets}
We train vision models to perform image classification tasks on real datasets. 

\noindent
\textbf{MNIST FCN/CNN.} 
We train \acp{FCN} and \acp{CNN} on the MNIST dataset and compare the performance of \texttt{Analog\;SGD} and {\ResidualLearning} under various $\tau$ on power responses; see the results in Fig. \ref{figure:FCN-CNN-MNIST}.
By tracking residual, {\ResidualLearning}~outperforms \AnalogSGD~and reaches comparable accuracy with \DigitalSGD. 
For both architectures, the accuracy of {\ResidualLearning} drops by $<1\%$.
In contrast, \AnalogSGD~takes a few epochs to achieve an observable accuracy increment in FCN training, rendering a slower convergence rate than {\ResidualLearning}. In CNN training, \AnalogSGD's accuracy increases more slowly than {\ResidualLearning} and converges to about 80\%, which is consistent with the theoretical claims.

\begin{figure*}[t]
    \footnotesize
    \centering
    \includegraphics[height=13em]{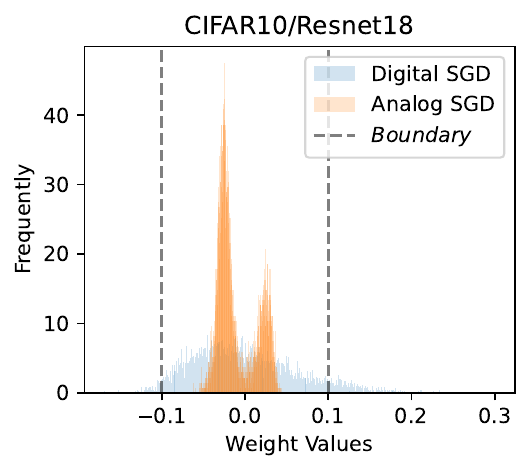}
    \includegraphics[height=13em]{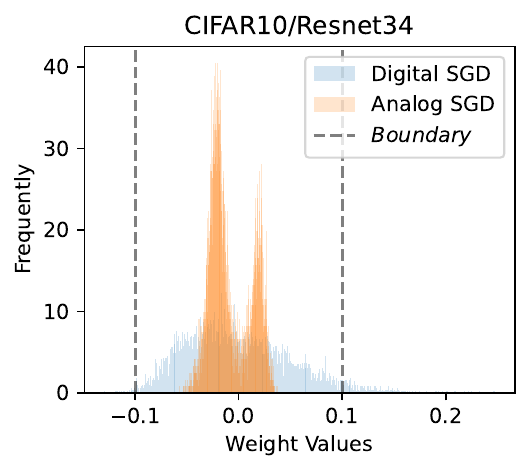}
    \includegraphics[height=13em]{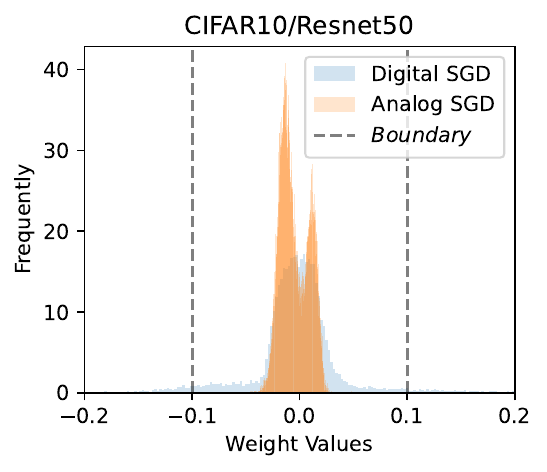}
    \includegraphics[height=13em]{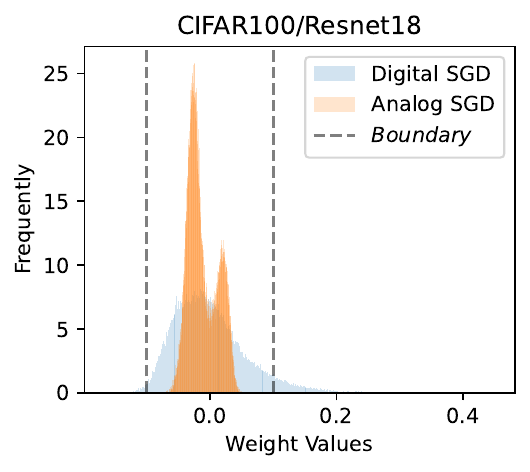}
    \includegraphics[height=13em]{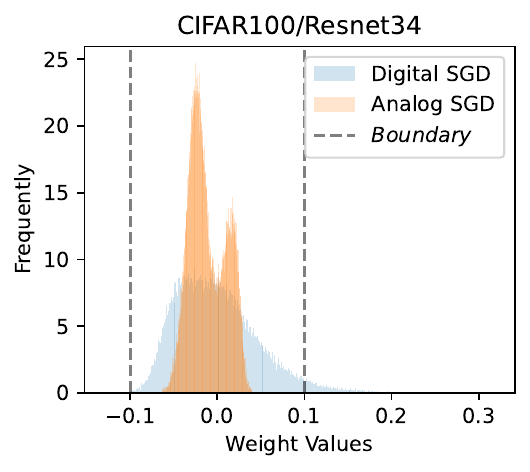}
    \includegraphics[height=13em]{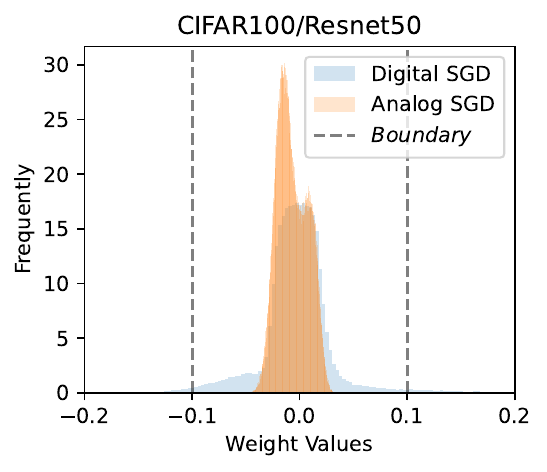}
    \caption{The weight distributions of ResNet models that are trained by \DigitalSGD~or \AnalogSGD~on CIFAR10/CIFAR100 datasets. The layers that are trained on analog hardware are visualized. ``Boundary'' lines mark the boundary of the dynamic range where the response functions are positive.
    }
    \label{figure:verification-implicit-bias}
\end{figure*}

\noindent
\textbf{CIFAR10/CIFAR100 ResNet.} We fine-tune three ResNet models with different scales on CIFAR10/CIFAR100 datasets.
Tables \ref{table:CIFAR-fine-tune-pow} and \ref{table:CIFAR-fine-tune-exp} show the final accuracy of all algorithms on power and exponential response functions, respectively.
Notably, {\ResidualLearning} consistently achieves performance that is comparable to the digital baseline. For example, {\ResidualLearning} improves upon {\AnalogSGD} by over 7\% for ResNet18 and around 2\% for ResNet34 and ResNet50 on CIFAR100, highlighting its ability to bridge the performance gap typically observed in analog training, particularly when contrasted with the less preferred outcomes of \AnalogSGD. These results suggest that the {\ResidualLearning} method effectively mitigates the asymmetric update issue.

\begin{table}[t]
\centering
\begin{tabular}{c|c|c|c|c|c|c}
\toprule
&  & \multirow{2}{*}{$\texttt{Digital SGD}$} 
& \multicolumn{2}{c|}{Power response function} & \multicolumn{2}{c}{Exponential response function} \\ 
\cmidrule{4-7}
&  &  & \texttt{Analog SGD} & \texttt{\ResidualLearning} & \texttt{Analog SGD} & \texttt{\ResidualLearning} \\ 
\midrule
\multirow{3}{*}{$\gamma_{\texttt{res}}=0.5$} 
  & $\tau=0.6$ & \multirow{9}{*}{98.17\stdv{$\pm$0.05}} & 96.01\stdv{$\pm$0.26} & 96.92\stdv{$\pm$0.19} & <15.00          & 97.27\stdv{$\pm$0.07} \\
  & $\tau=0.7$ &                                        & 97.40\stdv{$\pm$0.15} & 97.05\stdv{$\pm$0.05} & <15.00          & 97.39\stdv{$\pm$0.15} \\
  & $\tau=0.8$ &                                        & 97.38\stdv{$\pm$0.10} & 96.82\stdv{$\pm$0.17} & 94.00\stdv{$\pm$0.63} & 97.16\stdv{$\pm$0.16} \\ 
\cmidrule{1-2}\cmidrule{4-5}\cmidrule{6-7}
\multirow{3}{*}{$\gamma_{\texttt{res}}=1.0$} 
  & $\tau=0.6$ &                                        & <15.00          & 97.39\stdv{$\pm$0.05} & <15.00          & 97.46\stdv{$\pm$0.08} \\
  & $\tau=0.7$ &                                        & <15.00          & 97.33\stdv{$\pm$0.05} & <15.00          & 97.49\stdv{$\pm$0.04} \\
  & $\tau=0.8$ &                                        & <15.00          & 97.34\stdv{$\pm$0.09} & <15.00          & 97.25\stdv{$\pm$0.16} \\ 
\cmidrule{1-2}\cmidrule{4-5}\cmidrule{6-7}
\multirow{3}{*}{$\gamma_{\texttt{res}}=2.0$} 
  & $\tau=0.6$ &                                        & <15.00          & 96.93\stdv{$\pm$0.15} & <15.00          & 97.19\stdv{$\pm$0.16} \\
  & $\tau=0.7$ &                                        & <15.00          & 97.27\stdv{$\pm$0.02} & <15.00          & 97.72\stdv{$\pm$0.07} \\
  & $\tau=0.8$ &                                        & <15.00          & 97.18\stdv{$\pm$0.04} & <15.00          & 97.06\stdv{$\pm$0.10} \\ 
\bottomrule
\end{tabular}
\caption{Test accuracy comparison under different response function parameters $\tau$ and $\gamma_{\texttt{res}}$ for FCN training on the MNIST dataset with power or exponential response functions. 
}
\label{table:MNIST-pow-and-exp}
         \vspace{-1em}
\end{table}

\subsection{Ablation study on $\gamma$}
\label{section:ablation-gamma}
We conduct a series of simulations to study the impact of the mixing coefficient $\gamma$ in \eqref{recursion:HD-P} on the CIFAR10 or CIFAR100 dataset in the ResNet training tasks. The results are presented in Fig. \ref{figure:gamma-ablation}, which shows that {\ResidualLearning} achieves a great accuracy gain from increasing $\gamma$ from 0 to 0.1, while the gain saturates after that. Therefore, we conclude that {\ResidualLearning} benefits from a non-zero $\gamma$, and the performance is robust to the $\gamma$ selection.

\section{Conclusions, Limitations, and Future Work}
This paper studies the convergence of gradient-based optimization under persistent state-dependent
bias, motivated by analog in-memory computing. We prove that {\AnalogSGD} does not 
converge to the true minimizer of the original objective; instead, it converges to the 
minimizer of an implicitly penalized problem, and we characterize the resulting bias as a 
function of the hardware response model. To overcome this failure, we reformulate the 
training task as an equivalent bilevel optimization problem and propose {\ResidualLearning}, 
which provably achieves exact convergence to the solution of the original problem despite 
hardware imperfections. Beyond exact convergence, we establish that the convergence 
complexity of {\ResidualLearning} depends polynomially on the hardware condition number 
$\kappa_2$, and we show via a hard-instance construction that this polynomial dependence is 
unavoidable in general. The theoretical results are corroborated by numerical simulations. 
One limitation of this work is the gap between the upper bound $\tilde{\ccalO}(\kappa_2^4)$ and the lower bound $\tilde{\Omega}(\kappa_2^2)$ for {\ResidualLearning}; it remains an open question whether a fundamental limitation exists for general stochastic algorithms on this class of hardware.
Determining the sharp exponent for $\kappa_2$ and developing an optimal algorithm with respect to $\kappa_2$ remain open problems and constitute directions for future work.

\section*{Acknowledgment}
The authors thank Omobayode Fagbohungbe from IBM Research for his insightful discussions during the writing of this paper.

\section*{Declarations}

\noindent\textbf{Funding.}
The work of Z. Wu, Q. Xiao, and T. Chen was supported by the National Science Foundation (NSF) under Grants 2401297 and 2532653 and by a Cisco Research Award.

\noindent\textbf{Competing interests.}
The authors declare that they have no competing interests.

\noindent\textbf{Data and code availability.}
The datasets used in this study are publicly available. The code used for the numerical simulations is available at \url{https://github.com/Zhaoxian-Wu/analog-training}.
\bibliography{
    abrv,
    bilevel,
    byzantine,
    SA,
    bib,
    communication,
    optimization,
    DL,
    analog,
    LLM,
    textbook,
    math,
    publication
}

@article{robbins1951stochastic,
    title={A Stochastic Approximation Method},
    author={Robbins, Herbert and Monro, Sutton},
    journal=AnnMathStat,
    pages={400--407},
    year={1951},
    publisher={JSTOR}
}

@string{ICML    = "International Conference on Machine Learning"}

@string{NeurIPS = "Advances in Neural Information Processing Systems"}

@string{NIPS    = NeurIPS}

@string{ICLR    = "International Conference on Learning Representations"}

@string{AISTATS = "International Conference on Artificial Intelligence and Statistics"}

@string{AAAI    = "AAAI Conference on Artificial Intelligence"}

@string{COLT    = "Conference on Learning Theory"}

@string{AnnMathStat = "The Annals of Mathematical Statistics"}

@string{arxiv = "arXiv preprint arXiv: "}

@article{haensch2019next,
  title={The Next Generation of Deep Learning Hardware: Analog Computing},
  author={Haensch, Wilfried and Gokmen, Tayfun and Puri, Ruchir},
  journal={Proceedings of the IEEE},
  volume={107},
  number={1},
  pages={108--122},
  year={2019},
  publisher={IEEE}
}

@article{Y2020sebastianNatNano,
  title={Memory devices and applications for in-memory computing},
  author={Sebastian, Abu and Le Gallo, Manuel and Khaddam-Aljameh, Riduan and Eleftheriou, Evangelos},
  journal={Nature Nanotechnology},
  volume = 15,
  pages={529-544},
  year=2020,
  publisher={Nature Publishing Group}
}

@article{jain2019neural,
  title={Neural network accelerator design with resistive crossbars: Opportunities and challenges},
  author={Jain, Shubham and others},
  journal={IBM Journal of Research and Development},
  volume=63,
  number=6,
  pages={10--1},
  year=2019,
  publisher={IBM}
}

@ARTICLE{nandakumar2020,
AUTHOR={Nandakumar, S. R. and Le Gallo, Manuel and Piveteau, Christophe and Joshi, Vinay and Mariani, Giovanni and Boybat, Irem and Karunaratne, Geethan and Khaddam-Aljameh, Riduan and Egger, Urs and Petropoulos, Anastasios and Antonakopoulos, Theodore and Rajendran, Bipin and Sebastian, Abu and Eleftheriou, Evangelos},
TITLE={Mixed-Precision Deep Learning Based on Computational Memory},
JOURNAL={Frontiers in Neuroscience},
VOLUME=14,
YEAR=2020,
}

@article{Rasch2021AFA,
    title={A Flexible and Fast {PyTorch} Toolkit for Simulating Training and Inference on Analog Crossbar Arrays},
    author={Rasch, Malte J and Moreda, Diego and Gokmen, Tayfun and Le Gallo, Manuel and Carta, Fabio and Goldberg, Cindy and El Maghraoui, Kaoutar and Sebastian, Abu and Narayanan, Vijay},
    journal={IEEE International Conference on Artificial Intelligence Circuits and Systems},
    year={2021},
    pages={1-4}
}

@article{kim2019zero,
  title={Zero-shifting technique for deep neural network training on resistive cross-point arrays},
  author={Kim, Hyungjun and Rasch, Malte J and Gokmen, Tayfun and Ando, Takashi and Miyazoe, Hiroyuki and Kim, Jae-Joon and Rozen, John and Kim, Seyoung},
  journal={arXiv preprint arXiv:1907.10228},
  year={2019}
}

@article{Burr2016,
  author = {Burr, Geoffrey W and BrightSky, Matthew J and Sebastian, Abu and Cheng, Huai-Yu and Wu, Jau-Yi and Kim, Sangbum and Sosa, Norma E and Papandreou, Nikolaos and Lung, Hsiang-Lan and Pozidis, Haralampos and Eleftheriou, Evangelos and Lam, Chung H},
  issn = {2156-3357},
  journal = {IEEE Journal on Emerging and Selected Topics in Circuits and Systems},
  number = {2},
  pages = {146--162},
  title = {{Recent Progress in Phase-Change Memory Technology}},
  volume = {6},
  year = {2016}
}

@article{2020legalloJPD,
  title={An overview of phase-change memory device physics},
  author={Le Gallo, Manuel and Sebastian, Abu},
  journal={Journal of Physics D: Applied Physics},
  volume={53},
  number={21},
  pages={213002},
  year={2020},
  publisher={IOP Publishing}
}

@article{Jang2015,
    author = {Jang, Jun-Woo and Park, Sangsu and Burr, Geoffrey W and Hwang, Hyunsang and Jeong, Yoon-Ha},
    journal = {IEEE Electron Device Letters},
    number = {5},
    pages = {457--459},
    title = {Optimization of Conductance Change in {Pr$_{1-x}$Ca$_x$MnO$_3$}-Based Synaptic Devices for Neuromorphic Systems},
    volume = {36},
    year = {2015}
}

@inproceedings{Jang2014,
    author = {Jang, Jun-Woo and Park, Sangsu and Jeong, Yoon-Ha and Hwang, Hyunsang},
    booktitle = {IEEE International Symposium on Circuits and Systems},
    pages = {1054--1057},
    title = {{ReRAM}-based Synaptic Device for Neuromorphic Computing},
    year = {2014}
}

@article{onen2022nanosecond,
  title={Nanosecond protonic programmable resistors for analog deep learning},
  author={Onen, Murat and Emond, Nicolas and Wang, Baoming and Zhang, Difei and Ross, Frances M and Li, Ju and Yildiz, Bilge and Del Alamo, Jes{\'u}s A},
  journal={Science},
  volume={377},
  number={6605},
  pages={539--543},
  year={2022},
  publisher={American Association for the Advancement of Science}
}

@inproceedings{chen2015mitigating,
  title={Mitigating effects of non-ideal synaptic device characteristics for on-chip learning},
  author={Chen, Paiyu and Lin, Binbin and Wang, I-Ting and Hou, Tuohung and Ye, Jieping and Vrudhula, Sarma and Seo, Jae-sun and Cao, Yu and Yu, Shimeng},
  booktitle={IEEE/ACM International Conference on Computer-Aided Design},
  pages={194--199},
  year={2015},
  organization={IEEE}
}

@inproceedings{tang2018ecram,
  title={{ECRAM} as scalable synaptic cell for high-speed, low-power neuromorphic computing},
  author={Tang, Jianshi and Bishop, Douglas and Kim, Seyoung and Copel, Matt and Gokmen, Tayfun and Todorov, Teodor and Shin, SangHoon and Lee, Ko-Tao and Solomon, Paul and Chan, Kevin and others},
  booktitle={IEEE International Electron Devices Meeting},
  pages={13--1},
  year={2018},
  organization={IEEE}
}

@article{burr2015experimental,
  title={Experimental demonstration and tolerancing of a large-scale neural network (165 000 synapses) using phase-change memory as the synaptic weight element},
  author={Burr, Geoffrey W and Shelby, Robert M and Sidler, Severin and Di Nolfo, Carmelo and Jang, Junwoo and Boybat, Irem and Shenoy, Rohit S and Narayanan, Pritish and Virwani, Kumar and Giacometti, Emanuele U and others},
  journal={IEEE Transactions on Electron Devices},
  volume={62},
  number={11},
  pages={3498--3507},
  year={2015},
  publisher={IEEE}
}

@inproceedings{jouppi2023tpu,
  title={{TPU} v4: An optically reconfigurable supercomputer for machine learning with hardware support for embeddings},
  author={Jouppi, Norm and Kurian, George and Li, Sheng and Ma, Peter and Nagarajan, Rahul and Nai, Lifeng and Patil, Nishant and Subramanian, Suvinay and Swing, Andy and Towles, Brian and others},
  booktitle={Annual International Symposium on Computer Architecture},
  pages={1--14},
  year={2023}
}

@inproceedings{cosemans2019towards,
  author={Cosemans, Stefan and Verhoef, Bram-Ernst and Doevenspeck, Jonas and Papistas, Ioannis A. and Catthoor, Francky and Debacker, Peter and Mallik, Arindam and Verkest, Diederik},
  booktitle={IEEE International Electron Devices Meeting}, 
  title={Towards 10000{TOPS/W} {DNN} Inference with Analog in-Memory Computing – A Circuit Blueprint, Device Options and Requirements}, 
  year={2019},
  pages={22.2.1-22.2.4}
}

@inproceedings{papistas202122,
    title={A 22 nm, 1540 {TOP}/s/{W}, 12.1 {TOP}/s/mm 2 in-memory analog matrix-vector-multiplier for {DNN} acceleration},
    author={Papistas, Ioannis A and Cosemans, Stefan and Rooseleer, Bram and Doevenspeck, Jonas and Na, M-H and Mallik, Arindam and Debacker, Peter and Verkest, Diederik},
    booktitle={IEEE Custom Integrated Circuits Conference},
    pages={1--2},
    year={2021},
    organization={IEEE}
}

@article{stecconi2024analog,
  title={Analog Resistive Switching Devices for Training Deep Neural Networks with the Novel {Tiki-Taka} Algorithm},
  author={Stecconi, Tommaso and Bragaglia, Valeria and Rasch, Malte J and Carta, Fabio and Horst, Folkert and Falcone, Donato F and Ten Kate, Sofieke C and Gong, Nanbo and Ando, Takashi and Olziersky, Antonis and others},
  journal={Nano Letters},
  volume={24},
  number={3},
  pages={866--872},
  year={2024},
  publisher={ACS Publications}
}

@article{scellier2017equilibrium,
  title={Equilibrium propagation: Bridging the gap between energy-based models and backpropagation},
  author={Scellier, Benjamin and Bengio, Yoshua},
  journal={Frontiers in computational neuroscience},
  volume={11},
  pages={24},
  year={2017},
  publisher={Frontiers Media SA}
}

@article{wang2020ssm,
  title={{SSM}: a high-performance scheme for in situ training of imprecise memristor neural networks},
  author={Wang, Yaoyuan and Wu, Shuang and Tian, Lei and Shi, Luping},
  journal={Neurocomputing},
  volume={407},
  pages={270--280},
  year={2020},
  publisher={Elsevier}
}

@inproceedings{huang2020overcoming,
  title={Overcoming challenges for achieving high in-situ training accuracy with emerging memories},
  author={Huang, Shanshi and Sun, Xiaoyu and Peng, Xiaochen and Jiang, Hongwu and Yu, Shimeng},
  booktitle={Design, Automation \& Test in Europe Conference \& Exhibition},
  pages={1025--1030},
  year={2020},
  organization={IEEE}
}

@string{ICLR = "Proc. of International Conference on Learning Representations"}

@string{AISTATS = "Proc. of International Conference on Artificial Intelligence and Statistics"}

@string{ICML = "Proc. of International Conference on Machine Learning"}

@string{AAAI = "Proc. of AAAI Conference on Artificial Intelligence"}

@string{NeurIPS = "Proc. Advances in Neural Info. Process. Syst."}

@string{COLT = "Proc. of Conference on Learning Theory"}

@string{arxiv = "arXiv preprint: "}

@string{NIPS2021_loc= "virtual"}

@string{ICML2016_loc= "New York City, NY"}

@string{ICML2017_loc= "Sydney, Australia"}

@string{ICML2018_loc= "Stockholm, Sweden"}

@string{ICML2020_loc= "virtual"}

@string{ICML2021_loc= "virtual"}

@string{AISTATS2019_loc = "Naha, Japan"}

@inproceedings{chen2022single,
  title={A single-timescale method for stochastic bilevel optimization},
  author={Chen, Tianyi and Sun, Yuejiao and Xiao, Quan and Yin, Wotao},
  booktitle=AISTATS,
  year={2022},
  address=AISTATS2022_loc
}

@inproceedings{franceschi2017forward,
  title={Forward and reverse gradient-based hyperparameter optimization},
  author={Franceschi, Luca and Donini, Michele and Frasconi, Paolo and Pontil, Massimiliano},
  booktitle=ICML,
  year={2017},
  address=ICML2017_loc
}

@inproceedings{franceschi2018bilevel,
  title = {Bilevel Programming for Hyperparameter Optimization and Meta-Learning},
  author = {Franceschi, Luca and Frasconi, Paolo and Salzo, Saverio and Grazzi, Riccardo and Pontil, Massimilano},
  booktitle=ICML,
  year = {2018},
  address = ICML2018_loc
}

@article{bracken1973mathematical,
  title={Mathematical programs with optimization problems in the constraints},
  author={Bracken, Jerome and McGill, James T},
  journal={Operations Research},
  volume={21},
  number={1},
  pages={37--44},
  year={1973}
}

@article{ye1995optimality,
  title={Optimality conditions for bilevel programming problems},
  author={Ye, Jane J and Zhu, Daoli},
  journal={Optimization},
  volume={33},
  number={1},
  pages={9--27},
  year={1995}
}

@article{vicente1994bilevel,
  title={Bilevel and multilevel programming: A bibliography review},
  author={Vicente, Luis N and Calamai, Paul H},
  journal={Journal of Global optimization},
  volume={5},
  number={3},
  pages={291--306},
  year={1994}
}

@article{colson2007overview,
  title={An overview of bilevel optimization},
  author={Colson, Beno{\^\i}t and Marcotte, Patrice and Savard, Gilles},
  journal={Annals of operations research},
  volume={153},
  number={1},
  pages={235--256},
  year={2007}
}

@inproceedings{grazzi2020iteration,
  title={On the iteration complexity of hypergradient computation},
  author={Grazzi, Riccardo and Franceschi, Luca and Pontil, Massimiliano and Salzo, Saverio},
  booktitle=ICML,
  year={2020},
  address=ICML2020_loc
}

@inproceedings{yang2021provably,
  title={Provably faster algorithms for bilevel optimization},
  author={Yang, Junjie and Ji, Kaiyi and Liang, Yingbin},
  booktitle=NIPS,
  year={2021},
  address=NIPS2021_loc
}

@inproceedings{arbel2021amortized,
  title={Amortized Implicit Differentiation for Stochastic Bilevel Optimization},
  author={Arbel, Michael and Mairal, Julien},
  booktitle=ICLR,
  year={2022},
  address=ICLR2022_loc
}

@inproceedings{arbel2022nonconvex,
  title={Non-Convex Bilevel Games with Critical Point Selection Maps},
  author={Arbel, Michael and Mairal, Julien},
  booktitle=NIPS,
  year={2022},
  address=NIPS2022_loc
}

@inproceedings{liubome,
  title={BOME! Bilevel Optimization Made Easy: A Simple First-Order Approach},
  author={Liu, Bo and Ye, Mao and Wright, Stephen and Stone, Peter and others},
  booktitle=NIPS,
  year = 2022,
  address = NIPS2022_loc
}

@article{shen2023penalty,
  title={On Penalty-based Bilevel Gradient Descent Method},
  author={Shen, Han and Xiao, Quan and Chen, Tianyi},
  journal={Mathematical Programming},
  pages={1--51},
  year={2025}
}

@inproceedings{liu2023averaged,
  title={Averaged Method of Multipliers for Bi-Level Optimization without Lower-Level Strong Convexity},
  author={Liu, Risheng and Liu, Yaohua and Yao, Wei and Zeng, Shangzhi and Zhang, Jin},
  booktitle=ICML,
  year={2023},
  address=ICML2023_loc
}

@inproceedings{pedregosa2016hyperparameter,
  title={Hyperparameter optimization with approximate gradient},
  author={Pedregosa, Fabian},
  booktitle=ICML,
  year={2016},
  address=ICML2016_loc
}

@inproceedings{chen2021closing,
  title={Closing the Gap: Tighter Analysis of Alternating Stochastic Gradient Methods for Bilevel Problems},
  author={Chen, Tianyi and Sun, Yuejiao and Yin, Wotao},
  booktitle=NIPS,
  year={2021},
  address=NIPS2021_loc
}

@article{hong2020two,
  title={A two-timescale stochastic algorithm framework for bilevel optimization: Complexity analysis and application to actor-critic},
  author={Hong, Mingyi and Wai, Hoi-To and Wang, Zhaoran and Yang, Zhuoran},
  journal={SIAM Journal on Optimization},
  volume={33},
  number={1},
  pages={147--180},
  year={2023},
  publisher={SIAM}
}

@article{ghadimi2018approximation,
  title={Approximation methods for bilevel programming},
  author={Ghadimi, Saeed and Wang, Mengdi},
  journal={arXiv preprint arXiv:1802.02246},
  year={2018}
}

@inproceedings{ji2021bilevel,
  title={Bilevel optimization: Convergence analysis and enhanced design},
  author={Ji, Kaiyi and Yang, Junjie and Liang, Yingbin},
  booktitle=ICML,
  year={2021},
  address=ICML2021_loc
}

@inproceedings{li2022fully,
  title={A fully single loop algorithm for bilevel optimization without hessian inverse},
  author={Li, Junyi and Gu, Bin and Huang, Heng},
  booktitle=AAAI,
  year={2022},
  address=AAAI2022_loc
}

@inproceedings{khanduri2021near,
  title={A near-optimal algorithm for stochastic bilevel optimization via double-momentum},
  author={Khanduri, Prashant and Zeng, Siliang and Hong, Mingyi and Wai, Hoi-To and Wang, Zhaoran and Yang, Zhuoran},
  booktitle=NIPS,
  year={2021},
  address=NIPS2021_loc
}

@inproceedings{mehra2021penalty,
  title={Penalty method for inversion-free deep bilevel optimization},
  author={Mehra, Akshay and Hamm, Jihun},
  booktitle={Asian Conference on Machine Learning},
  year={2021},
  address={virtual}
}

@inproceedings{xiao2023generalized,
  title={A Generalized Alternating Method for Bilevel Optimization under the Polyak-{\L}ojasiewicz Condition},
  author={Xiao, Quan and Lu, Songtao and Chen, Tianyi},
  booktitle=NIPS,
  year={2023},
  address=NIPS2023_loc
}

@inproceedings{kwon2023fully,
  title={A fully first-order method for stochastic bilevel optimization},
  author={Kwon, Jeongyeol and Kwon, Dohyun and Wright, Stephen and Nowak, Robert D},
  booktitle=ICML,
  year={2023},
  address=ICML2023_loc
}

@article{lu2023first,
  title={First-order penalty methods for bilevel optimization},
  author={Lu, Zhaosong and Mei, Sanyou},
  journal={arXiv preprint arXiv:2301.01716},
  year={2023}
}

@inproceedings{kwon2023penalty,
  title={On Penalty Methods for Nonconvex Bilevel Optimization and First-Order Stochastic Approximation},
  author={Kwon, Jeongyeol and Kwon, Dohyun and Wright, Steve and Nowak, Robert},
  booktitle=ICLR,
  year={2024},
  address=ICLR2024_loc
}

@inproceedings{shaban2019truncated,
  title={Truncated back-propagation for bilevel optimization},
  author={Shaban, Amirreza and Cheng, Ching-An and Hatch, Nathan and Boots, Byron},
  booktitle=AISTATS,
  year={2019},
  address=AISTATS2019_loc
}

@inproceedings{wang2021fast,
  title={Fast algorithms for stackelberg prediction game with least squares loss},
  author={Wang, Jiali and Chen, He and Jiang, Rujun and Li, Xudong and Li, Zihao},
  booktitle=ICML,
  year={2021},
  address=ICML2021_loc
}

@inproceedings{wang2022solving,
  title={Solving Stackelberg Prediction Game with Least Squares Loss via Spherically Constrained Least Squares Reformulation},
  author={Wang, Jiali and Huang, Wen and Jiang, Rujun and Li, Xudong and Wang, Alex L},
  booktitle=ICML,
  year={2022},
  address=ICML2022_loc
}

@article{jeyakumar2016convergent,
  title={Convergent semidefinite programming relaxations for global bilevel polynomial optimization problems},
  author={Jeyakumar, Vaithilingam and Lasserre, Jean B and Li, Guoyin and Pham, TS},
  journal={SIAM Journal on Optimization},
  volume={26},
  number={1},
  pages={753--780},
  year={2016}
}

@article{vicente1994descent,
  title={Descent approaches for quadratic bilevel programming},
  author={Vicente, Luis and Savard, Gilles and J{\'u}dice, Joaquim},
  journal={Journal of Optimization theory and applications},
  volume={81},
  number={2},
  pages={379--399},
  year={1994}
}

@article{xiao2024unlocking,
  title={Unlocking Global Optimality in Bilevel Optimization: A Pilot Study},
  author={Xiao, Quan and Chen, Tianyi},
  journal={arXiv preprint arXiv:2408.16087},
  year={2024}
}

@article{zucchet2022beyond,
  title={Beyond backpropagation: bilevel optimization through implicit differentiation and equilibrium propagation},
  author={Zucchet, Nicolas and Sacramento, Jo{\~a}o},
  journal={Neural Computation},
  volume={34},
  number={12},
  pages={2309--2346},
  year={2022}
}

@article{scellier2021deep,
  title={A deep learning theory for neural networks grounded in physics},
  author={Scellier, Benjamin},
  journal={arXiv preprint arXiv:2103.09985},
  year={2021}
}

@inproceedings{jiang2024primal,
  title={A primal-dual-assisted penalty approach to bilevel optimization with coupled constraints},
  author={Jiang, Liuyuan and Xiao, Quan and Tenorio, Victor M and Real-Rojas, Fernando and Marques, Antonio G and Chen, Tianyi},
  booktitle=NIPS,
  year={2024},
  address=NIPS2024_loc
}

@article{yao2024overcoming,
  title={Overcoming Lower-Level Constraints in Bilevel Optimization: A Novel Approach with Regularized Gap Functions},
  author={Yao, Wei and Yin, Haian and Zeng, Shangzhi and Zhang, Jin},
  journal={arXiv preprint arXiv:2406.01992},
  year={2024}
}

@inproceedings{chen2024finding,
  title={On finding small hyper-gradients in bilevel optimization: Hardness results and improved analysis},
  author={Chen, Lesi and Xu, Jing and Zhang, Jingzhao},
  booktitle=COLT,
  pages={947--980},
  year={2024},
  organization={PMLR}
}

@inproceedings{karimireddybyzantine,
    title={Byzantine-Robust Learning on Heterogeneous Datasets via Bucketing},
    author={Karimireddy, Sai Praneeth and He, Lie and Jaggi, Martin},
    booktitle={International Conference on Learning Representations},
    year={2022}
}

@inproceedings{karimireddy2019error,
  title={Error feedback fixes signsgd and other gradient compression schemes},
  author={Karimireddy, Sai Praneeth and Rebjock, Quentin and Stich, Sebastian and Jaggi, Martin},
  booktitle={International Conference on Machine Learning},
  pages={3252--3261},
  year={2019},
  organization={PMLR}
}

@article{magnusson2020maintaining,
  title={On maintaining linear convergence of distributed learning and optimization under limited communication},
  author={Magn{\'u}sson, Sindri and Shokri-Ghadikolaei, Hossein and Li, Na},
  journal={IEEE Transactions on Signal Processing},
  volume={68},
  pages={6101--6116},
  year={2020},
  publisher={IEEE}
}

@article{stich2018sparsified,
  title={Sparsified {SGD} with memory},
  author={Stich, Sebastian U and Cordonnier, Jean-Baptiste and Jaggi, Martin},
  journal={NeurIPS},
  year={2018}
}

@article{wangni2018gradient,
  title={Gradient sparsification for communication-efficient distributed optimization},
  author={Wangni, Jianqiao and Wang, Jialei and Liu, Ji and Zhang, Tong},
  journal={NeurIPS},
  year={2018}
}

@article{safaryan2022uncertainty,
  title={Uncertainty principle for communication compression in distributed and federated learning and the search for an optimal compressor},
  author={Safaryan, Mher and Shulgin, Egor and Richt{\'a}rik, Peter},
  journal={Information and Inference: A Journal of the IMA},
  volume={11},
  number={2},
  pages={557--580},
  year={2022},
  publisher={Oxford University Press}
}

@article{alistarh2017qsgd,
  title={{QSGD}: Communication-efficient {SGD} via gradient quantization and encoding},
  author={Alistarh, Dan and Grubic, Demjan and Li, Jerry and Tomioka, Ryota and Vojnovic, Milan},
  journal={NeurIPS},
  year={2017}
}

@article{bottou2018optimization,
    title={Optimization methods for large-scale machine learning},
    author={Bottou, L{\'e}on and Curtis, Frank E and Nocedal, Jorge},
    journal={SIAM review},
    volume={60},
    number={2},
    pages={223--311},
    year={2018},
    publisher={SIAM}
}

@inproceedings{hazan2014beyond,
  title={Beyond the regret minimization barrier: Optimal algorithms for stochastic strongly-convex optimization},
  author={Hazan, Elad and Kale, Satyen},
  booktitle={Proceedings of The 27th Conference on Learning Theory},
  pages={421--436},
  year={2014},
  organization={PMLR}
}

@inproceedings{agarwal2009information,
  title={Information-theoretic lower bounds on the oracle complexity of stochastic convex optimization},
  author={Agarwal, Alekh and Bartlett, Peter L. and Ravikumar, Pradeep and Wainwright, Martin J.},
  booktitle={Advances in Neural Information Processing Systems},
  volume={22},
  year={2009}
}

@inproceedings{karimi2016linear,
  title={Linear convergence of gradient and proximal-gradient methods under the {Polyak-Lojasiewicz} condition},
  author={Karimi, Hamed and Nutini, Julie and Schmidt, Mark},
  booktitle={Joint European conference on machine learning and knowledge discovery in databases},
  pages={795--811},
  year={2016},
  organization={Springer}
}

@article{ajalloeian2020convergence,
  title={On the convergence of {SGD} with biased gradients},
  author={Ajalloeian, Ahmad and Stich, Sebastian U},
  journal={arXiv preprint arXiv:2008.00051},
  year={2020}
}

@article{huang2021improved,
  title={An improved analysis and rates for variance reduction under without-replacement sampling orders},
  author={Huang, Xinmeng and Yuan, Kun and Mao, Xianghui and Yin, Wotao},
  journal={Advances in Neural Information Processing Systems},
  volume={34},
  pages={3232--3243},
  year={2021}
}

@inproceedings{haochen2019random,
  title={Random shuffling beats SGD after finite epochs},
  author={Haochen, Jeff and Sra, Suvrit},
  booktitle={International Conference on Machine Learning},
  pages={2624--2633},
  year={2019},
  organization={PMLR}
}

@article{gurbuzbalaban2021random,
  title={Why random reshuffling beats stochastic gradient descent},
  author={Gurbuzbalaban, Mert and Ozdaglar, Asu and Parrilo, Pablo A},
  journal={Mathematical Programming},
  volume={186},
  number={1},
  pages={49--84},
  year={2021},
  publisher={Springer}
}

@article{khanh2024new,
  title={A new inexact gradient descent method with applications to nonsmooth convex optimization},
  author={Khanh, Pham Duy and Mordukhovich, Boris S and Tran, Dat Ba},
  journal={Optimization Methods and Software},
  pages={1--29},
  year={2024},
  publisher={Taylor \& Francis}
}

@inproceedings{nguyen2017sarah,
  title={{SARAH}: A novel method for machine learning problems using stochastic recursive gradient},
  author={Nguyen, Lam M and Liu, Jie and Scheinberg, Katya and Tak{\'a}{\v{c}}, Martin},
  booktitle={International conference on machine learning},
  pages={2613--2621},
  year={2017},
  organization={PMLR}
}

@article{schmidt2017minimizing,
  title={Minimizing finite sums with the stochastic average gradient},
  author={Schmidt, Mark and Le Roux, Nicolas and Bach, Francis},
  journal={Mathematical Programming},
  volume={162},
  number={1},
  pages={83--112},
  year={2017},
  publisher={Springer}
}

@article{cutkosky2019momentum,
  title={Momentum-based variance reduction in non-convex {SGD}},
  author={Cutkosky, Ashok and Orabona, Francesco},
  journal={Advances in neural information processing systems},
  volume={32},
  year={2019}
}

@article{polyak1992acceleration,
  title={Acceleration of stochastic approximation by averaging},
  author={Polyak, Boris T and Juditsky, Anatoli B},
  journal={SIAM journal on control and optimization},
  volume={30},
  number={4},
  pages={838--855},
  year={1992},
  publisher={SIAM}
}

@article{wu2020federated,
  title={Federated variance-reduced stochastic gradient descent with robustness to byzantine attacks},
  author={Wu, Zhaoxian and Ling, Qing and Chen, Tianyi and Giannakis, Georgios B},
  journal={IEEE Transactions on Signal Processing},
  volume={68},
  pages={4583--4596},
  year={2020},
  publisher={IEEE}
}

@inproceedings{wu2024towards,
    title={Towards Exact Gradient-based Training on Analog In-memory Computing},
    author={Wu, Zhaoxian and Gokmen, Tayfun and Rasch, Malte J and Chen, Tianyi},
    booktitle={Advances in Neural Information Processing Systems},
    year={2024}
}

@inproceedings{wu2025analog,
    title={Analog In-memory Training on General Non-ideal Resistive Elements: The Impact of Response Functions},
    author={Wu, Zhaoxian and Xiao, Quan and Gokmen, Tayfun and Fagbohungbe, Omobayode and Chen, Tianyi},
    booktitle={Advances in Neural Information Processing Systems},
    year={2025}
}

@book{Nesterov2013Introductory,
    title={Introductory Lectures on Convex Optimization: A Basic Course},
    author={Nesterov, Yurii},
    year={2013},
    publisher={Springer}
}

\bibliographystyle{spbasic} 

\clearpage
\appendix

\begin{center}
\large \textbf{Appendix for  ``\FullTitle''} \\
\end{center}

\vspace{-0.5cm}

\section{Useful Lemmas and Proofs}

\begin{lemma}[Lipschitz continuity of analog update]
    \label{lemma:lip-analog-update}
    The analog operator in \eqref{analog-update} is Lipschitz continuous in its second argument. 
    That is, for any $Z, \Delta Z_1,\Delta Z_2\in\reals^D$,
    it holds
    \begin{align}
        \label{eq:A-lip}
        \lnorm A(\Delta Z_1; Z)-A(\Delta Z_2; Z)\rnorm
        \le&\ q_{\max}\lnorm \Delta Z_1-\Delta Z_2\rnorm.
    \end{align}
\end{lemma}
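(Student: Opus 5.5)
The plan is to reduce to the scalar case and then sum over coordinates. The operator $A(\cdotc;\cdotc)$ acts component-wise, so it suffices to show that for every coordinate $d$ and all scalars $a,b$,
\begin{align*}
|A(a;z)-A(b;z)|\le q_{\max}|a-b|
\end{align*}
with $z=[Z]_d$ and the pair $q_{+,d},q_{-,d}$. Squaring this and summing over $d\in[D]$ then gives \eqref{eq:A-lip} in the $\ell_2$-norm. Note that the Lipschitz property is really with respect to the first argument, the desired update $\Delta Z$, with the state $Z$ held fixed; that is the form in which \eqref{eq:A-lip} is written.

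For fixed $z$, the map $a\mapsto A(a;z)$ defined in \eqref{analog-update} is piecewise linear. Its slope is $q_+(z)$ on $[0,\infty)$ and $q_-(z)$ on $(-\infty,0)$, and both pieces vanish at $a=0$, so the map is continuous. By the positive-definiteness part of Definition \ref{assumption:response-factor}, both slopes lie in $[q_{\min},q_{\max}]$.

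I would then split into cases on the signs of $a$ and $b$.
\begin{itemize}
\item If $a,b\ge 0$, then $A(a;z)-A(b;z)=q_+(z)(a-b)$, whose absolute value is at most $q_{\max}|a-b|$.
\item If $a,b<0$, the same argument applies with $q_-(z)$ in place of $q_+(z)$.
\item If $a\ge 0>b$ (the reverse case $b\ge 0>a$ follows by symmetry), then $A(a;z)-A(b;z)=a\,q_+(z)+(-b)\,q_-(z)$. Both terms are nonnegative, so this quantity is bounded by $q_{\max}(a+(-b))=q_{\max}|a-b|$, because $|a-b|=a-b$ here.
\end{itemize}

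No real obstacle is expected. The only point requiring care is the mixed-sign case, where the two pieces of the map have different slopes. It is handled by using $A(0;z)=0$ and the nonnegativity of the two contributions, or equivalently by viewing $A(\cdot;z)$ as the integral of a derivative bounded by $q_{\max}$. Positivity of the responses is not even needed for this; only the upper bound $q_{\max}$ on their magnitude is used.
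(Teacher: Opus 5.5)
Your proof is correct and follows essentially the paper's argument: the paper also writes $A$ coordinatewise, notes that each scalar map $[\Delta Z]_d\mapsto[A(\Delta Z;Z)]_d$ is piecewise linear with slopes in $[q_{\min},q_{\max}]$, and then squares and sums over $d\in[D]$, and your explicit sign case split spells out the step the paper leaves implicit. You are also right that the Lipschitz property is in the first argument $\Delta Z$, not the second as the lemma's wording states.
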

\begin{proof}
    By definition of analog operator $A$ \eqref{analog-update}, each component of the operator has the representation
    \begin{align}
        \label{eq:A-component-representation}
        [A(\Delta Z; Z)]_d
        =
        \begin{cases}
            [\Delta Z]_d \, q_+([Z]_d), & [\Delta Z]_d\ge 0,\\
            [\Delta Z]_d \, q_-([Z]_d), & [\Delta Z]_d<0,
        \end{cases}
        \qquad d\in[D].
    \end{align}
    This scalar piecewise-linear map is increasing, and each of its slopes lies in $[q_{\min},q_{\max}]$, which leads to 
    \begin{align}
        \label{eq:A-component-lipschitz}
        |[A(\Delta Z_1; Z)]_d-[A(\Delta Z_2; Z)]_d|
        \le q_{\max}|[\Delta Z_1]_d-[\Delta Z_2]_d|,
        \qquad\forall d\in[D].
    \end{align}
    Squaring and summing \eqref{eq:A-component-lipschitz} gives \eqref{eq:A-lip}.
\end{proof}

\begin{lemma}[Operator coercivity]
    \label{lemma:A-op-noisy}
    The analog operator $A(\cdotc;\cdotc)$ satisfies the following properties.
    \begin{enumerate}[label=(\roman*)]
        \item \emph{Coercivity.} For any $Z,\Delta Z\in\reals^D$,
        \begin{align}
            \label{eq:A-mono}
            \la \Delta Z,A(\Delta Z; Z)\ra
            \ge&\ q_{\min}\lnorm \Delta Z\rnorm^2.
        \end{align}
        \item \emph{Coercivity with deterministic perturbation.} For any
        $Z,\Delta Z,\delta Z\in\reals^D$,
        \begin{align}
            \label{eq:A-perturbed-lower-young}
            \la \Delta Z,A(\Delta Z+\delta Z; Z)\ra
            \ge&\ \frac{q_{\min}}{2}\lnorm \Delta Z\rnorm^2
            +\frac{1}{2q_{\max}}\lnorm A(\Delta Z+\delta Z; Z)\rnorm^2
            -\frac{q_{\max}}{2}\lnorm\delta Z\rnorm^2.
        \end{align}
        The same inner product also admits a lower bound in terms of the realized update:
        \begin{align}
            \label{eq:A-perturbed-lower-realized-young}
            \la \Delta Z,A(\Delta Z+\delta Z; Z)\ra
            \ge&\ \frac{1}{2q_{\max}}\lnorm A(\Delta Z+\delta Z; Z)\rnorm^2
            -\frac{q_{\max}}{2}\lnorm\delta Z\rnorm^2.
        \end{align}
        \item \emph{Coercivity with stochastic perturbation.} If $\delta Z$ is random,
        $\mbE[\delta Z]=0$, and $\mbE[\|\delta Z\|^2]\le\sigma^2$, then
        \begin{align}
            \label{eq:A-noisy-lower-young}
            \mbE[\la \Delta Z,A(\Delta Z+\delta Z; Z)\ra]
            \ge&\ \frac{3q_{\min}}{4}\lnorm \Delta Z\rnorm^2
            -\frac{\sigma^2}{q_{\min}}\|G(Z)\|_\infty^2.
        \end{align}
    \end{enumerate}
\end{lemma}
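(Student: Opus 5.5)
The plan is to reduce everything to a scalar, per-coordinate computation. By \eqref{eq:A-component-representation}, for every coordinate $d$ we can write $[A(\Delta Z;Z)]_d=q_d\,[\Delta Z]_d$ with $q_d\in\{q_+([Z]_d),q_-([Z]_d)\}\subseteq[q_{\min},q_{\max}]$. The factor $q_d$ is chosen by the sign of the argument, and the regularity in Definition~\ref{assumption:response-factor} guarantees it lies in $[q_{\min},q_{\max}]$. Each inequality will be checked coordinatewise and then summed over $d\in[D]$.

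\textbf{Part (i).} Componentwise, $[\Delta Z]_d[A(\Delta Z;Z)]_d=q_d[\Delta Z]_d^2\ge q_{\min}[\Delta Z]_d^2$. Summing over $d$ gives \eqref{eq:A-mono}.

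\textbf{Part (ii).} Fix a coordinate and write $x=[\Delta Z]_d$, $e=[\delta Z]_d$, $u=x+e$ and $a=q u$, where $q\in[q_{\min},q_{\max}]$ is the response selected by the sign of $u$.

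For \eqref{eq:A-perturbed-lower-young}, expand $xa=q x^2+qxe$. Since $a^2=q^2u^2\le q\,q_{\max}u^2$, it suffices to prove
\[
q x^2+qxe\ \ge\ \tfrac{q_{\min}}{2}x^2+\tfrac{q}{2}(x+e)^2-\tfrac{q_{\max}}{2}e^2 .
\]
Expanding the square, this reduces to $\tfrac q2x^2-\tfrac q2e^2\ge\tfrac{q_{\min}}2x^2-\tfrac{q_{\max}}2e^2$, which holds because $q_{\min}\le q\le q_{\max}$. Summing over coordinates gives \eqref{eq:A-perturbed-lower-young}.

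For \eqref{eq:A-perturbed-lower-realized-young}, we use $ua=qu^2\ge a^2/q_{\max}$. This gives $xa=ua-ea\ge a^2/q_{\max}-ea$. Young's inequality bounds $ea\le a^2/(2q_{\max})+q_{\max}e^2/2$, and summing over coordinates yields the claim.

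\textbf{Part (iii).} This is the only step needing an idea: we use the decomposition $A(x;z)=xF(z)-|x|G(z)$. Per coordinate, the expectation of $x\,A(x+e)$ equals
\[
F x^2-xG\,\mbE|x+e| \;=\; x\,A(x) \;-\; xG\bigl(\mbE|x+e|-|x|\bigr).
\]
Here we used $\mbE[e]=0$ for the linear part. By part (i), the first term $x\,A(x)$ is at least $q_{\min}x^2$. For the correction, the reverse triangle inequality gives $\bigl|\mbE|x+e|-|x|\bigr|\le\mbE|e|$. Summing over coordinates, the correction is at most
\[
\|G(Z)\|_\infty\sum_d|[\Delta Z]_d|\,\mbE|[\delta Z]_d|\ \le\ \|G(Z)\|_\infty\|\Delta Z\|\,\sigma ,
\]
using Cauchy--Schwarz and then Jensen, $\bigl(\sum_d(\mbE|[\delta Z]_d|)^2\bigr)^{1/2}\le\sigma$. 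Young's inequality bounds this by $\tfrac{q_{\min}}{4}\|\Delta Z\|^2+\tfrac{\sigma^2}{q_{\min}}\|G(Z)\|_\infty^2$, which yields \eqref{eq:A-noisy-lower-young}.

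The main obstacle is the sign-dependent nonlinearity inside the expectation. It is handled by splitting off the noiseless term $x\,A(x)$, so that only the perturbation of $|x+e|$ has to be controlled, and that perturbation is $1$-Lipschitz in $e$.
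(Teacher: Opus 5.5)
Your proposal is correct and follows essentially the paper's proof. For (ii) the paper uses coordinatewise sign cases with the polarization identity $q\,x(x+e)=\frac{q}{2}x^2+\frac{1}{2q}(q(x+e))^2-\frac{q}{2}e^2$, which your expansion matches. For (iii) it uses the same split into the noiseless term $\langle \Delta Z, A(\Delta Z;Z)\rangle$, a zero-mean $F$-term, and the $1$-Lipschitz correction $|x+e|-|x|$ absorbed by Young's inequality with the same constants; that you apply Cauchy--Schwarz and Jensen to first moments while the paper applies Young inside the expectation is immaterial. Your one detour is \eqref{eq:A-perturbed-lower-realized-young}, which the paper gets simply by dropping the nonnegative term $\frac{q_{\min}}{2}\|\Delta Z\|^2$ from \eqref{eq:A-perturbed-lower-young}, so your separate Young argument there is valid but unnecessary.
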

\begin{proof}
    We prove the three properties in order.
    Applying the lower bound of the response function to \eqref{eq:A-component-representation} gives the strong monotonicity estimate
    \begin{align}
        \label{eq:A-mono-proof}
        \la \Delta Z,A(\Delta Z; Z)\ra
        =&\ \sum_{d\in[D]} [\Delta Z]_d[A(\Delta Z; Z)]_d
        \ge q_{\min}\sum_{d\in[D]}[\Delta Z]_d^2
        =q_{\min}\lnorm\Delta Z\rnorm^2.
    \end{align}
    For the coercivity with deterministic perturbation, we prove a joint coordinatewise bound using the response selected by the sign of the realized argument $[\Delta Z+\delta Z]_d$.
    If $[\Delta Z+\delta Z]_d\ge0$, then the component representation \eqref{eq:A-component-representation} gives
    \begin{align}
        \label{eq:A-perturbed-positive-coordinate}
        [\Delta Z]_d[A(\Delta Z+\delta Z; Z)]_d
        =&\
        q_+([Z]_d)[\Delta Z]_d[\Delta Z+\delta Z]_d
        \\
        =&\
        \frac{q_+([Z]_d)}{2}[\Delta Z]_d^2
        +\frac{1}{2q_+([Z]_d)}[A(\Delta Z+\delta Z; Z)]_d^2
        -\frac{q_+([Z]_d)}{2}[\delta Z]_d^2.
        \nonumber
    \end{align}
    If $[\Delta Z+\delta Z]_d<0$, then the analogous identity is
    \begin{align}
        \label{eq:A-perturbed-negative-coordinate}
        [\Delta Z]_d[A(\Delta Z+\delta Z; Z)]_d
        =&\
        q_-([Z]_d)[\Delta Z]_d[\Delta Z+\delta Z]_d
        \\
        =&\
        \frac{q_-([Z]_d)}{2}[\Delta Z]_d^2
        +\frac{1}{2q_-([Z]_d)}[A(\Delta Z+\delta Z; Z)]_d^2
        -\frac{q_-([Z]_d)}{2}[\delta Z]_d^2.
        \nonumber
    \end{align}
    The response bounds in Definition \ref{assumption:response-factor} imply, in either case,
    \begin{align}
        \label{eq:A-perturbed-coordinate-joint-bound}
        [\Delta Z]_d[A(\Delta Z+\delta Z; Z)]_d
        \ge&\
        \frac{q_{\min}}{2}[\Delta Z]_d^2
        +\frac{1}{2q_{\max}}[A(\Delta Z+\delta Z; Z)]_d^2
        -\frac{q_{\max}}{2}[\delta Z]_d^2
    \end{align}
    Summing \eqref{eq:A-perturbed-coordinate-joint-bound} over $d\in[D]$ gives \eqref{eq:A-perturbed-lower-young}.
    Dropping the nonnegative first term in \eqref{eq:A-perturbed-lower-young} gives \eqref{eq:A-perturbed-lower-realized-young}.

    For the coercivity with stochastic perturbation, leveraging the symmetric decomposition \eqref{biased-update} gives
    \begin{align}
        \label{eq:A-noisy-decomposition}
        &\ \la \Delta Z,A(\Delta Z+\delta Z; Z)\ra
        = \la \Delta Z,(\Delta Z + \delta Z) \odot F(Z) -|\Delta Z + \delta Z| \odot G(Z)\ra\\
        =&\ \la \Delta Z,A(\Delta Z; Z)\ra
        +\la \Delta Z,\delta Z\odot F(Z)\ra
        +\la \Delta Z,(|\Delta Z|-|\Delta Z+\delta Z|)\odot G(Z)\ra.
        \nonumber
    \end{align}
    The last terms in the \ac{RHS} of \eqref{eq:A-noisy-decomposition} can be bounded as
    \begin{align}
        &\ \mbE[\la \Delta Z,(|\Delta Z+\delta Z|-|\Delta Z|)\odot G(Z)\ra]
        \ge \frac{q_{\min}}{4}\lnorm\Delta Z\rnorm^2 - \frac{1}{q_{\min}}\mbE[\lnorm(|\Delta Z+\delta Z|-|\Delta Z|)\odot G(Z)\rnorm^2]
        \nonumber\\
        \ge&\ \frac{q_{\min}}{4}\lnorm\Delta Z\rnorm^2 - \frac{1}{q_{\min}}\mbE[\lnorm|\Delta Z+\delta Z|-|\Delta Z|\rnorm^2]\, \|G(Z)\|_\infty^2
        \ge \frac{q_{\min}}{4}\lnorm\Delta Z\rnorm^2 - \frac{1}{q_{\min}}\mbE[\lnorm|\delta Z\rnorm^2]\, \|G(Z)\|_\infty^2
        \nonumber\\
        \ge&\ \frac{q_{\min}}{4}\lnorm\Delta Z\rnorm^2 - \frac{1}{q_{\min}}\mbE[\lnorm|\delta Z\rnorm^2]\, \frac{1}{q_{\min}}\|G(Z)\|_\infty^2
        \ge \frac{q_{\min}}{4}\lnorm\Delta Z\rnorm^2 - \, \frac{\sigma^2}{q_{\min}}\|G(Z)\|_\infty^2
        \label{eq:A-noisy-perturbation}
    \end{align}
    where the second inequality applies
    $||a+b|-|a||\le |b|$ component-wise.
    Taking expectations of \eqref{eq:A-noisy-decomposition}, and plugging in \eqref{eq:A-noisy-perturbation} give the desired lower bound.
\end{proof}

\begin{lemma}[Quadratic growth and PL condition, \cite{karimi2016linear}]
    \label{lemma:QG}
    Strong convexity (Assumption \ref{assumption:Lip}) implies: (a) quadratic growth condition $\frac{2}{\mu}(f(W) - f^*) \ge \|W - W^*\|^2$; and (b) Polyak--Lojasiewicz (PL) condition $\|\nabla f(W)\|^2 \ge 2\mu(f(W)-f^*)$, for any $W$.
\end{lemma}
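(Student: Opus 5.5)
The statement to prove is Lemma~\ref{lemma:QG}: strong convexity gives (a) quadratic growth and (b) the PL inequality. Both follow from the first-order characterization of $\mu$-strong convexity,
\begin{align}
    f(W') \ge f(W) + \la \nabla f(W), W'-W\ra + \frac{\mu}{2}\|W'-W\|^2, \qquad \forall W,W'\in\reals^D.
    \nonumber
\end{align}
So the first step is to derive this inequality from Assumption~\ref{assumption:Lip}. The assumption is stated only as a lower Lipschitz-type bound $\mu\|W-W'\|\le\|\nabla f(W)-\nabla f(W')\|$, which by itself does not imply convexity. I will therefore read it, as the paper's wording ``$\mu$-strongly convex'' intends, as including strong monotonicity of the gradient:
\begin{align}
    \la \nabla f(W)-\nabla f(W'), W-W'\ra \ge \mu\|W-W'\|^2 .
    \nonumber
\end{align}
Integrating along the segment, write $f(W')-f(W)=\int_0^1 \la \nabla f(W+t(W'-W)), W'-W\ra\,\diff t$. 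Then subtract $\la\nabla f(W),W'-W\ra$ and apply monotonicity at the pair $(W+t(W'-W),W)$. This bounds the integrand below by $t\mu\|W'-W\|^2$, and integrating gives the displayed inequality. This interpretive step is the only real obstacle: making explicit that strong convexity is meant in the standard monotone sense. Once that is settled, the rest is routine.

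For (a), take $W=W^*$ in the first-order inequality. Since $\nabla f(W^*)=0$, it gives $f(W')-f^*\ge\frac{\mu}{2}\|W'-W^*\|^2$. Renaming $W'$ as $W$ yields the quadratic growth bound.

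For (b), fix $W$ and minimize both sides of the first-order inequality over $W'$. The left side has minimum $f^*$. The right side is a convex quadratic in $W'$, minimized at $W'=W-\nabla f(W)/\mu$, where it equals $f(W)-\frac{1}{2\mu}\|\nabla f(W)\|^2$. Hence $f^*\ge f(W)-\frac{1}{2\mu}\|\nabla f(W)\|^2$, which rearranges to $\|\nabla f(W)\|^2\ge 2\mu(f(W)-f^*)$.

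Existence of the minimizer $W^*$, which part (a) uses, also follows from the first-order inequality. It shows $f$ is coercive, and $f$ is continuous since it is differentiable, so a minimizer exists. Strict convexity makes it unique, consistent with the remark after Assumption~\ref{assumption:Lip}. Alternatively, one can simply cite \cite{karimi2016linear} for both implications.
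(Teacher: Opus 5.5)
Your proof is correct. The paper gives no argument of its own and simply cites \cite{karimi2016linear}, and your derivation is the standard argument behind that citation: obtain the first-order strong-convexity inequality by integrating strong monotonicity, set $W=W^*$ for quadratic growth, and minimize both sides over $W'$ for PL. Your remark that the literal gradient bound in Assumption~\ref{assumption:Lip} does not by itself imply convexity (e.g.\ $f(W)=-\tfrac12\|W\|^2$ satisfies it) is apt, so reading it as strong monotonicity, as the phrase ``$\mu$-strongly convex'' intends, is the right way to make the lemma well posed.
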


\section{Proof of Theorem \ref{theorem:implicit-regularization}: Implicit Penalty of Analog Training}
\label{section:proof-implicit-regularization}

\begin{proof}[Proof of Theorem \ref{theorem:implicit-regularization}]
We first prove that $H$ is non-singular and hence \eqref{problem:implicit-regularization-solution} is well-defined.
Since $R(\cdotc)$ acts component-wise, $J_R(W^\diamond)$ is diagonal, and the absolute values of the diagonal components are its singular values.
Using the condition $q'_+(W)<-c_0, q'_-(W)>c_0$, the $d$-th diagonal component of $J_R(W)$ is given by
\begin{align}
    \label{inequality:implicit-regularization-S0}
    &\ [J_R(W)]_{dd}
    = \lp\frac{G([W]_d)}{F([W]_d)}\rp'
    = \lp\frac{q_-([W]_d)-q_+([W]_d)}{q_-([W]_d)+q_+([W]_d)}\rp' \\
    =&\ \frac{2(q_-'([W]_d)q_+([W]_d)-q_+'([W]_d)q_-([W]_d)))}{(q_-([W]_d)+q_+([W]_d))^2}
    \ge \frac{c_0 q_{\min}}{q_{\max}^2}
    > 0
    \nonumber
\end{align}
which implies $J_R(W)$ is positive definite. 
Since all components of $\Sigma$ are positive and $\nabla^2 f(W^*)$ is positive definite, $H = \nabla^2 f(W^*)+\Diag(\Sigma)\,J_R(W^\diamond)$ is also positive definite and hence non-singular.
Now we separately show the inequalities of Theorem \ref{theorem:implicit-regularization}. 

\noindent
\textbf{(Step 1a) Upper bound of $\|\nabla f_{\Sigma}(\tdW^*)\|$.}
The gradient of $f_{\Sigma}(W)$ is given by
\begin{align}
    \nabla f_{\Sigma}(W) = \nabla f(W) + \Sigma \odot R(W).
\end{align}
We will analyze the gradient at $\tdW^*$ via local Taylor expansions at $\tdW^*$. 
We begin by showing that $\tdW^*$ lies in $\ccalS$ for $W^\diamond$ sufficiently close to $W^*$.
By the definition of $\tdW^*$, it holds that
\begin{align}
    \label{inequality:implicit-regularization-S0-1}
    \|W^\diamond-\tdW^*\|
    =&\ \|
    H^{-1}
    \nabla^2 f(W^*) ~(W^*-W^\diamond)\|
    \le \sigma_{\max}(H^{-1}
    \nabla^2 f(W^*))\|W^\diamond-W^*\|,
    \\
    \label{inequality:implicit-regularization-S0-2}
    \|W^*-\tdW^*\|
    =&\ \|
    H^{-1}
    \Diag(\Sigma)\,J_R(W^\diamond) (W^*-W^\diamond)\|
    \le \sigma_{\max}(H^{-1}
    \Diag(\Sigma)\,J_R(W^\diamond)) \|W^\diamond-W^*\|.
\end{align}
Indeed, the second inequality and the local boundedness of the matrix factor imply that $\|\tdW^*-W^*\|$ is bounded by a constant multiple of $\|W^\diamond-W^*\|$ for $W^\diamond$ near $W^*$.
Since $\ccalS$ is open and contains $W^*$, this bound implies that $\tdW^*\in\ccalS$ whenever $W^\diamond$ is sufficiently close to $W^*$.
Because $\ccalS$ is convex, the line segments from $W^*$ to $\tdW^*$ and from $W^\diamond$ to $\tdW^*$ are contained in $\ccalS$.
Since $\nabla f(W^*)=0$ and $R(W^\diamond)=G(W^\diamond)/F(W^\diamond)=0$, Taylor's theorem and the local Lipschitz continuity of $\nabla^2 f(\cdotc)$ and $J_R(\cdotc)$ give
\begin{align}
    \label{inequality:implicit-regularization-taylor}
    \nabla f(\tdW^*) =&\ \nabla^2 f(W^*)(\tdW^* - W^*) + \ccalO(\|\tdW^* - W^*\|^2), \\
    R(\tdW^*)=&\ J_R(W^\diamond) (\tdW^*-W^\diamond)  + \ccalO(\|\tdW^* -W^\diamond\|^2),
    \nonumber
\end{align}
where the two remainder vectors have norms bounded by constants times $\|\tdW^* - W^*\|^2$ and $\|\tdW^* -W^\diamond\|^2$, respectively.
We bound the gradient of the penalized objective as follows
\begin{align}
    \label{inequality:implicit-regularization-S1-T1}
		&\ \|\nabla f_{\Sigma}(\tdW^*)\|
	= \lnorm \nabla f(\tdW^*) + \Sigma\odot \frac{G(\tdW^*)}{F(\tdW^*)} \rnorm
	\\
		=&\ \| \nabla^2 f(W^*)(\tdW^* - W^*) + \ccalO(\|\tdW^* - W^*\|^2) + \Diag(\Sigma) J_R(W^\diamond) (\tdW^*-W^\diamond)  + \ccalO(\|\tdW^* -W^\diamond\|^2) \|
		\nonumber\\
		=&\ \|\ccalO(\|\tdW^* - W^*\|^2) + \ccalO(\|\tdW^* -W^\diamond\|^2)\|
		\le \ccalO(\|W^* -W^\diamond\|^2)
		\nonumber
\end{align}
where the last equality uses the definition of $\tdW^*$, and the last inequality follows from \eqref{inequality:implicit-regularization-S0-1} and \eqref{inequality:implicit-regularization-S0-2}.
Consequently, there exists a constant $c_{1a}$ such that $\|\nabla f_{\Sigma}(\tdW^*)\|\le c_{1a}\|W^* -W^\diamond\|^2$.

\noindent
\textbf{(Step 1b) Lower bound of $\|\nabla f_{\Sigma}(W^\diamond)\|$ and $\|\nabla f_{\Sigma}(W^*)\|$.}
\begin{align}
    \|\nabla f_{\Sigma}(W^\diamond)\|
    =&\ \lnorm \nabla f(W^\diamond) + \Sigma \odot R(W^\diamond) \rnorm
    = \lnorm \nabla f(W^\diamond)\rnorm
    \ge \mu'\|W^*-W^\diamond\|
\end{align}
where the inequality holds as $\|\nabla f(W^\diamond)\|=\|\nabla f(W^\diamond)-\nabla f(W^*)\|\ge \mu'\|W^*-W^\diamond\|$. Similarly, inequality \eqref{inequality:implicit-regularization-S0} leads to the bound that
\begin{align}
    \label{inequality:implicit-regularization-step-1b-S1}
    \lnorm \Sigma \odot R(W^*)\rnorm
    = \|\Sigma \odot R(W^*)-\Sigma \odot R(W^\diamond)\|
    \ge \lp\min_{d\in[D]}[\Sigma]_d\rp\frac{c_0 q_{\min}}{q_{\max}^2} \|W^\diamond-W^*\|.
\end{align}
With the constant $c_b := \lp\min_{d\in[D]}[\Sigma]_d\rp\frac{c_0 q_{\min}}{q_{\max}^2}$, \eqref{inequality:implicit-regularization-step-1b-S1} leads to the desired lower bound
\begin{align}
    \|\nabla f_{\Sigma}(W^*)\|
    =&\ \lnorm \nabla f(W^*) + \Sigma \odot R(W^*) \rnorm
    = \lnorm \Sigma \odot R(W^*)\rnorm
    \ge c_b \|W^\diamond-W^*\|.
\end{align}

\noindent
\textbf{(Step 2a) Upper bound of $\|T(\tdW^*)\|$.}
By the definition of $T(\tdW^*)$, we have
\begin{align}
    \label{inequality:implicit-regularization-S1}
    &\ \|T(\tdW^*)\|
    = \lnorm \mbE_\xi[\nabla f(\tdW^*; \xi)] + \mbE_\xi[|\nabla f(\tdW^*; \xi)|] \odot R(\tdW^*) \rnorm
    \\
    =&\ \lnorm \nabla f(\tdW^*) + \mbE_\xi[|\nabla f(\tdW^*; \xi)|] \odot R(\tdW^*)\rnorm
    \nonumber \\
    \le&\ \lnorm \nabla f(\tdW^*) + \mbE_\xi[|\nabla f(W^*; \xi)|] \odot R(\tdW^*) \rnorm
    +\lnorm (\mbE_\xi[|\nabla f(\tdW^*; \xi)|]-\mbE_\xi[|\nabla f(W^*; \xi)|]) \odot R(\tdW^*) \rnorm
    \nonumber \\
    =&\ \|\nabla f_{\Sigma}(\tdW^*)\|
    +\lnorm (\mbE_\xi[|\nabla f(\tdW^*; \xi)|]-\mbE_\xi[|\nabla f(W^*; \xi)|]) \odot R(\tdW^*) \rnorm.
    \nonumber
\end{align}
The first term in the \ac{RHS} of \eqref{inequality:implicit-regularization-S1} is bounded by \eqref{inequality:implicit-regularization-S1-T1}.
By applying $||x|-|y|| \le |x-y|$ for any $x, y\in\reals$ at all components, the second term in the \ac{RHS} of \eqref{inequality:implicit-regularization-S1} is bounded by
\begin{align}
    \label{inequality:implicit-regularization-S1-T2}
    &\ \lnorm (\mbE_\xi[|\nabla f(\tdW^*; \xi)|]-\mbE_\xi[|\nabla f(W^*; \xi)|]) \odot R(\tdW^*) \rnorm
   	\\
    \le&\ \lnorm \mbE_\xi[|\nabla f(\tdW^*; \xi)-\nabla f(W^*; \xi)|] \odot \lp R(\tdW^*)-R(W^\diamond)\rp \rnorm
   	\nonumber \\
    \le&\ \lnorm \mbE_\xi[|\nabla f(\tdW^*; \xi)-\nabla f(W^*; \xi)|]\rnorm \lnorm R(\tdW^*)-R(W^\diamond) \rnorm
   	\nonumber \\
    \le&\ \mbE_\xi\lB\lnorm \nabla f(\tdW^*; \xi)-\nabla f(W^*; \xi)\rnorm\rB \lnorm R(\tdW^*)-R(W^\diamond) \rnorm
   	\nonumber \\
   	\le&\ \ccalO(\|\tdW^*-W^*\| \|\tdW^* - W^\diamond\|)
   	= \ccalO(\|W^*-W^\diamond\|^2).
   	\nonumber
\end{align}
Plugging back \eqref{inequality:implicit-regularization-S1-T1} and \eqref{inequality:implicit-regularization-S1-T2} into \eqref{inequality:implicit-regularization-S1} shows that there exists a constant $c_{2a} > 0$ such that $\|T(\tdW^*)\| \le c_{2a}\|W^\diamond-W^*\|^2$.

\noindent
\textbf{(Step 2b) Lower bound of $\|T(W^\diamond)\|$ and $\|T(W^*)\|$.} Similar to Step 1b, it holds that
\begin{align}
    \|T(W^\diamond)\|
    =&\, \lnorm \mbE_\xi[\nabla f(W^\diamond; \xi)]  + \mbE_\xi[|\nabla f(W^\diamond; \xi)|] \odot R(W^\diamond) \rnorm
    = \lnorm \nabla f(W^\diamond)\rnorm
    \ge \mu'\|W^*-W^\diamond\|, \\
    \|T(W^*)\|
    =&\, \lnorm \mbE_\xi[\nabla f(W^*; \xi)] + \mbE_\xi[|\nabla f(W^*; \xi)|] \odot R(W^*) \rnorm
    = \lnorm \Sigma \odot R(W^*)\rnorm 
    \ge c_b\|W^*-W^\diamond\|.
\end{align}
Defining $c_1 := \max\{c_{1a}, c_{2a}\}$ and $c_2  := \min\{\mu', c_b\}$, we complete the proof.
\end{proof}

\section{Proof of Theorem \ref{theorem:TT-convergence-scvx}: Convergence of \ResidualLearning}
\label{section:proof-TT-convergence-scvx}
This section provides the proof of lemmas that are required by the proof of Theorem \ref{theorem:TT-convergence-scvx}.
\subsection{Proof of Lemma \ref{lemma:TT-barW-descent}: Descent of sequence $\bar W_k$}
\label{section:proof-lemma:TT-barW-descent}
\begin{proof}[Proof of Lemma \ref{lemma:TT-barW-descent}]
The $L$-smooth assumption (Assumption \ref{assumption:Lip}) implies that
\begin{align}
    \label{inequality:TT-convergence-1}
    &\ \mathbb{E}_{\xi_k}[f(\bar W_{k+1})] \le f(\bar W_k)
    +\mathbb{E}_{\xi_k}[\la \nabla f(\bar W_k), \bar W_{k+1}-\bar W_k\ra]
    + \frac{L}{2}\mathbb{E}_{\xi_k}[\|\bar W_{k+1}-\bar W_k\|^2] \\
    =&\ f(\bar W_k)
    + \underbrace{\gamma \mathbb{E}_{\xi_k}[\la \nabla f(\bar W_k), P_{k+1}-P_k\ra]}_{(a)}
    + \underbrace{\mathbb{E}_{\xi_k}[\la \nabla f(\bar W_k), W_{k+1}-W_k\ra]}_{(b)}
    + \underbrace{\frac{L}{2}\mathbb{E}_{\xi_k}[\|\bar W_{k+1}-\bar W_k\|^2]}_{(c)}.
    \nonumber
\end{align}
Next, we will handle each term in the \ac{RHS} of \eqref{inequality:TT-convergence-1} separately.

\noindent
\textbf{Bound of the second term (a).}
To bound term (a) in the \ac{RHS} of \eqref{inequality:TT-convergence-1}, write the $P$-update as
\begin{align}
    \label{inequality:TT-convergence-1-T2}
    P_{k+1}-P_k
    =-\alpha A\bigl(P_k,\nabla f(\bar W_k;\xi_k)\bigr).
\end{align}
Combining \eqref{inequality:TT-convergence-1-T2} with the zero-mean noisy lower bound in Lemma \ref{lemma:A-op-noisy}, applied with $Z=P_k$, $\Delta Z=\nabla f(\bar W_k)$, and $\delta Z=\nabla f(\bar W_k;\xi_k)-\nabla f(\bar W_k)$, gives
\begin{align}
    \label{inequality:TT-convergence-1-T2-mixed}
    \gamma\mathbb{E}_{\xi_k}[\la \nabla f(\bar W_k),P_{k+1}-P_k\ra]
    =&\ -\alpha\gamma\mathbb{E}_{\xi_k}\lB
    \la \nabla f(\bar W_k),A\bigl(P_k,\nabla f(\bar W_k;\xi_k)\bigr)\ra
    \rB
    \\
    \le&\ -\frac{3\alpha\gamma q_{\min}}{4}\lnorm\nabla f(\bar W_k)\rnorm^2
    +\frac{\alpha\gamma\sigma^2}{q_{\min}}\lnorm G(P_k)\rnorm_\infty^2.
    \nonumber
\end{align}
Thus, term (a) is bounded by
\begin{align}
    \label{inequality:TT-convergence-1-T2-2}
    \gamma\mathbb{E}_{\xi_k}[\la \nabla f(\bar W_k), P_{k+1}-P_k\ra]
    \le&\ -\frac{3\alpha\gamma q_{\min}}{4}\lnorm\nabla f(\bar W_k)\rnorm^2
    +\frac{\alpha\gamma\sigma^2}{q_{\min}}\lnorm G(P_k)\rnorm_\infty^2.
\end{align}

\noindent
\textbf{Bound of the third term (b).} By Young's inequality,
we have
\begin{align}
    \label{inequality:TT-convergence-1-T3}
    \mathbb{E}_{\xi_k}[\la \nabla f(\bar W_k), W_{k+1}-W_k\ra]
    \le&\ \frac{\alpha\gamma q_{\min}}{4}\lnorm\nabla f(\bar W_k)\rnorm^2
    + \frac{1}{\alpha\gamma q_{\min}}\mathbb{E}_{\xi_k}[\|W_{k+1}-W_k\|^2].
\end{align}

\noindent
\textbf{Bound of the third term (c).} Repeatedly applying inequality $\|U+V\|^2\le 2\|U\|^2+2\|V\|^2$ for any $U, V\in\reals^D$, and using \eqref{eq:A-lip}, we have
\begin{align}
    \label{inequality:TT-convergence-1-T4}
    \frac{L}{2}\mathbb{E}_{\xi_k}[\|\bar W_{k+1}-\bar W_k\|^2]
    \le&\ L\mathbb{E}_{\xi_k}[\|W_{k+1}-W_k\|^2]
    + \gamma^2 L\mathbb{E}_{\xi_k}[\|P_{k+1}-P_k\|^2] 
    \\
    \le&\ L\mathbb{E}_{\xi_k}[\|W_{k+1}-W_k\|^2]
    + \alpha^2\gamma^2 L q_{\max}^2
    \lp \lnorm\nabla f(\bar W_k)\rnorm^2+\sigma^2\rp
    \nonumber
\end{align}
where the last inequality follows from
\begin{align}
    \label{inequality:TT-convergence-1-T4-S1-T3}
    \mathbb{E}_{\xi_k}[\|P_{k+1}-P_k\|^2]
    =&\ \alpha^2\mathbb{E}_{\xi_k}\lB
    \lnorm A\bigl(P_k,\nabla f(\bar W_k;\xi_k)\bigr)\rnorm^2
    \rB\\
    \le&\ \alpha^2 q_{\max}^2
    \mathbb{E}_{\xi_k}\lB\lnorm\nabla f(\bar W_k;\xi_k)\rnorm^2\rB
    \le \alpha^2 q_{\max}^2
    \lp\lnorm\nabla f(\bar W_k)\rnorm^2+\sigma^2\rp.
    \nonumber
\end{align}

\noindent
\textbf{Combination of the upper bound $(a)$, $(b)$, and $(c)$.} Plugging \eqref{inequality:TT-convergence-1-T2-2}, \eqref{inequality:TT-convergence-1-T3}, and \eqref{inequality:TT-convergence-1-T4} into \eqref{inequality:TT-convergence-1}, and using the stepsize rules
$\alpha\le q_{\min}/(4\gamma Lq_{\max}^2)$ and
$\alpha\le 1/(\gamma Lq_{\min})$ together with the PL condition in Lemma \ref{lemma:QG}, we derive
\begin{align}
    \label{inequality:TT-convergence-2}
    \saveeq{inequality-saved:TT-barW-descent}{
        \mathbb{E}_{\xi_k}[f(\bar W_{k+1})-f^*]
        \le&\ \lp 1-\frac{\alpha\gamma\mu q_{\min}}{2}\rp
        (f(\bar W_k)-f^*)
        +\frac{\alpha\gamma\sigma^2}{q_{\min}}\lnorm G(P_k)\rnorm^2_\infty
        \\
        \hspace{-1em}
        &\ + \alpha^2\gamma^2 L q_{\max}^2 \sigma^2
        + \frac{2}{\alpha\gamma q_{\min}}~\mathbb{E}_{\xi_k}\lB\|W_{k+1}-W_k\|^2\rB
        \nonumber.
    }
\end{align}
Thus the proofs of \eqref{inequality:TT-barW-descent} is completed.
Leveraging the $L$-smooth assumption (Assumption \ref{assumption:Lip}), we have
\begin{align}
    \label{inequality:TT-P-convergence-1}
    &\ \mathbb{E}_{\xi_k}[f(\bar W_{k+\frac{1}{2}})]
    = f(\bar W_k)
    + \gamma {\mathbb{E}_{\xi_k}[\la \nabla f(\bar W_k), P_{k+1}-P_k\ra]}
    + {\frac{L\gamma^2}{2}\mathbb{E}_{\xi_k}[\|P_{k+1}-P_k\|^2]}.
\end{align}
Plugging \eqref{inequality:TT-convergence-1-T2-2} and \eqref{inequality:TT-convergence-1-T4-S1-T3} into \eqref{inequality:TT-P-convergence-1}, and using $\alpha\le q_{\min}/(2\gamma Lq_{\max}^2)$, we have
\begin{align}
    \saveeq{inequality-saved:TT-P-in-barW-descent}{
        \mathbb{E}_{\xi_k}[f(\bar W_{k+\frac{1}{2}})-f^*]
        \le&\ \lp 1-\frac{\alpha\gamma\mu q_{\min}}{2}\rp
        (f(\bar W_k)-f^*)
        + \frac{\alpha\gamma\sigma^2}{q_{\min}}\lnorm G(P_k)\rnorm^2_\infty
        + \frac{\alpha^2\gamma^2 L q_{\max}^2\sigma^2}{2}.
    }
\end{align}
Now the proof of \eqref{inequality:TT-P-in-barW-descent} is completed.
\end{proof}

\subsection{Proof of Lemma \ref{lemma:TT-W-descent}: Descent of sequence $W_k$}
\label{section:proof-lemma:TT-W-descent}
\begin{proof}[Proof of Lemma \ref{lemma:TT-W-descent}]
Recall $P^*(W) = (W^* - W) / \gamma + W^\diamond$. It holds that
\begin{align}
    \label{inequality:TT-convergence-scvx-S1}
    &\ \|P^*(W_{k+1})-W^\diamond\|^2
    = \frac{1}{\gamma^2}\|W_{k+1}-W^*\|^2\\
    =&\ \frac{1}{\gamma^2}\|W_k-W^*\|^2
    + \frac{2}{\gamma^2}\la W_k-W^*, W_{k+1}-W_k\ra
    + \frac{1}{\gamma^2}\|W_{k+1}-W_k\|^2
    \nonumber \\
    =&\ \|P^*(W_k)-W^\diamond\|^2 - \frac{2}{\gamma}\la P^*(W_k)-W^\diamond, W_{k+1}-W_k\ra + \frac{1}{\gamma^2}\|W_{k+1}-W_k\|^2.
    \nonumber
\end{align}
Writing \eqref{recursion:HD-W} in the argument order of \eqref{analog-update}, the $W$-update is
\begin{align}
    \label{eq:W-step-as-A}
    W_{k+1}-W_k
    =\beta\gamma A(P_{k+1}-W^\diamond;W_k).
\end{align}
We bound the second term in the \ac{RHS} of \eqref{inequality:TT-convergence-scvx-S1} by applying the joint deterministic perturbation bound \eqref{eq:A-perturbed-lower-young} in Lemma \ref{lemma:A-op-noisy}.
The bound is applied with $Z=W_k$, $\Delta Z=P^*(W_k)-W^\diamond$, and $\delta Z=P_{k+1}-P^*(W_k)$:
\begin{align}
    \label{inequality:TT-convergence-scvx-S1-T2-S1}
    &\ -\frac{2}{\gamma}\la P^*(W_k)-W^\diamond, W_{k+1}-W_k\ra
    = -2\beta
    \la P^*(W_k)-W^\diamond,
    A(P_{k+1}-W^\diamond;W_k)\ra
    \\
    \le&\
    -\beta q_{\min}\lnorm P^*(W_k)-W^\diamond\rnorm^2
    +\beta q_{\max}
    \lnorm P_{k+1}-P^*(W_k)\rnorm^2
    -\frac{1}{\beta\gamma^2 q_{\max}}\|W_{k+1}-W_k\|^2.
    \nonumber
\end{align}
Plugging \eqref{inequality:TT-convergence-scvx-S1-T2-S1} into \eqref{inequality:TT-convergence-scvx-S1} gives
\begin{align}
    \label{inequality:TT-convergence-scvx-S1-pre-simplified}
    \|P^*(W_{k+1})-W^\diamond\|^2 
    \le&\ 
    \lp 1- \beta q_{\min}\rp\|P^*(W_k)-W^\diamond\|^2\\
    &\ 
    + \beta q_{\max}
    \lnorm P_{k+1} - P^*(W_k)\rnorm^2
    -\frac{1}{\gamma^2}\lp \frac{1}{\beta q_{\max}}-1\rp\|W_{k+1}-W_k\|^2.
    \nonumber
\end{align}
Using $\frac{1}{\beta q_{\max}}-1\ge\frac{1}{2\beta q_{\max}}$, which follows from $\beta\le 1/(2q_{\max})$, in \eqref{inequality:TT-convergence-scvx-S1-pre-simplified} gives \eqref{inequality:TT-W-descent}:
\begin{align}
    \saveeq{inequality-saved:TT-W-descent}{
    \|P^*(W_{k+1})-W^\diamond\|^2 
    \le&\ 
    \lp 1- \beta q_{\min}\rp\|P^*(W_k)-W^\diamond\|^2
    + \beta q_{\max}
    \lnorm P_{k+1} - P^*(W_k)\rnorm^2\\
    &\ -\frac{1}{2\beta\gamma^2 q_{\max}}\|W_{k+1}-W_k\|^2
    \nonumber
    }
\end{align}
which completes the proof.
\end{proof}

\subsection{Proof of Lemma \ref{lemma:TT-coefficient-lower-bounds}: Coefficient lower bounds}
\label{section:proof-lemma:TT-coefficient-lower-bounds}
\begin{proof}[Proof of Lemma \ref{lemma:TT-coefficient-lower-bounds}]
The residual coupling in $A_1$ is controlled as follows:
\begin{align}
    \label{eq:TT-coupling-beta-check}
     \frac{2}{\mu\gamma^2}\cdot\frac{\beta C\gamma^2q_{\max}}{2}
    = \frac{16\beta^2 q_{\max}^2}{\alpha\mu\gamma q_{\min}}
    = \frac{\alpha\gamma\mu q_{\min}}{16},
\end{align}
where the first identity uses the choice of $C$, and the second identity uses the choice of $\beta$.
The lower bound on $\gamma$ in \eqref{eq:C-gamma-choice} controls the two hardware terms:
\begin{align}
    \label{eq:C-gamma-check}
    \frac{8\alpha\sigma^2 L_S^2}{\mu\gamma q_{\min}}
    \le \rho,
    \qquad
    \frac{4\alpha\gamma\sigma^2 L_S^2}{q_{\min}}
    \le \frac{\beta q_{\min}}{4} C\gamma^2.
\end{align}
Accordingly, by the definition of $A_1$,
\begin{align}
    A_1
    =&\ \restateeq{definition:coefficient-A1}\\
    \ge&\ \frac{\alpha\gamma\mu q_{\min}}{2}
    - \frac{\alpha\gamma\mu q_{\min}}{16}
    - \rho
    > \rho,
    \nonumber
\end{align}
where the last inequality uses $\rho=\alpha\gamma\mu q_{\min}^{2}/(32q_{\max})\le\alpha\gamma\mu q_{\min}/32$.
Similarly, by the definition of $A_2$,
\begin{align}
    A_2
    = \restateeq{definition:coefficient-A2}
    \ge&\ \frac{\beta q_{\min}}{2}C\gamma^2
    - \frac{\beta q_{\min}}{4}C\gamma^2
    = \rho C\gamma^2/2.
\end{align}
Finally, $C$ in \eqref{eq:C-gamma-choice} is chosen to satisfy the drift cancellation condition
\begin{align}
    A_3
    = \restateeq{definition:coefficient-A3}
    = \frac{2}{\alpha\gamma q_{\min}} - \frac{4}{\alpha\gamma q_{\min}}
    \le 0.
\end{align}
This proves the desired coefficient bounds.
\end{proof}

\section{Proof of Theorem \ref{theorem:lower-bound}}
\label{section:proof-lower-bound}
This section provides the proof of the lemmas used in the proof of Theorem \ref{theorem:lower-bound}.

\subsection{Proof of Lemma~\ref{lemma:conditional-mean-analog-increment}}
\label{section:proof-lemma:conditional-mean-analog-increment}

\begin{proof}
    In this one-step calculation, $P_k$ and $\bar W_k$ are fixed and the
    expectation is taken only over the two atoms of the fresh noise $\xi_k$ in \eqref{eq:skewed-noise}.
    Since $\nabla f(\bar W_k;\xi_k)=\nabla f(\bar W_k)+\xi_k$ and
    $p_k=\frac{q_+(P_k)}{q_+(P_k)+q_-(P_k)}$, $1-p_k=\frac{q_-(P_k)}{q_+(P_k)+q_-(P_k)}$, the sign condition
    \eqref{eq:conditional-absolute-value-sign-condition} removes the absolute
    values at the two noise atoms,
    \begin{align}
        \label{eq:conditional-absolute-value-remove-abs}
        \left|\nabla f(\bar W_k)
        +\sigma\sqrt{\frac{1-p_k}{p_k}}\right|
        &=
        \nabla f(\bar W_k)
        +\sigma\sqrt{\frac{1-p_k}{p_k}},\\
        \left|\nabla f(\bar W_k)
        -\sigma\sqrt{\frac{p_k}{1-p_k}}\right|
        &=
        \sigma\sqrt{\frac{p_k}{1-p_k}}
        -\nabla f(\bar W_k),
        \nonumber
    \end{align}
    so that, using the zero mean of $\xi_k$,
    \begin{align}
        \label{eq:conditional-absolute-value-proof}
        \mathbb{E}_{\xi_k}\bigl[
            |\nabla f(\bar W_k; \xi_k)|
        \bigr]
        =&\
        p_k\left(
            \nabla f(\bar W_k)
            +\sigma\sqrt{\frac{1-p_k}{p_k}}
        \right)
        +(1-p_k)\left(
            \sigma\sqrt{\frac{p_k}{1-p_k}}
            -\nabla f(\bar W_k)
        \right)
        \\
        =&\
        2\sigma\sqrt{p_k(1-p_k)}
        +(2p_k-1)\nabla f(\bar W_k)
        =
        \sigma\frac{\sqrt{q_+(P_k)q_-(P_k)}}{F(P_k)}
        -\frac{G(P_k)}{F(P_k)}\nabla f(\bar W_k),
        \nonumber
    \end{align}
    The last equality follows from the two probability identities
    \begin{align}
        \label{eq:noise-probability-identities}
        2\sqrt{p_k(1-p_k)}
        =&\
        \frac{2\sqrt{q_+(P_k)q_-(P_k)}}{q_+(P_k)+q_-(P_k)}
        =\frac{\sqrt{q_+(P_k)q_-(P_k)}}{F(P_k)},
        \\
        2p_k-1
        =&\
        \frac{q_+(P_k)-q_-(P_k)}{q_+(P_k)+q_-(P_k)}
        =-\frac{G(P_k)}{F(P_k)}.
        \nonumber
    \end{align}
    Expanding the effective gradient increment
    $F(P_k)\nabla f(\bar W_k;\xi_k)+|\nabla f(\bar W_k;\xi_k)|G(P_k)$, taking the conditional expectation, and inserting the previous display gives
    \begin{align}
        \label{eq:A-mean-derivation}
        &\ \mathbb{E}_{\xi_k}\bigl[
            F(P_k)\nabla f(\bar W_k;\xi_k)
            +|\nabla f(\bar W_k;\xi_k)|G(P_k)
        \bigr]
        \\
        =&\
        F(P_k)\nabla f(\bar W_k)
        +\Bigl(
            \sigma\frac{\sqrt{q_+(P_k)q_-(P_k)}}{F(P_k)}
            -\frac{G(P_k)}{F(P_k)}\nabla f(\bar W_k)
        \Bigr)G(P_k)
        \nonumber
        \\
        =&\
        \frac{F(P_k)^2-G(P_k)^2}{F(P_k)}\nabla f(\bar W_k)
        +\sigma\frac{\sqrt{q_+(P_k)q_-(P_k)}}{F(P_k)}\,G(P_k).
        \nonumber
    \end{align}
    and the identity $F(P_k)^2-G(P_k)^2=q_+(P_k)q_-(P_k)$ proves
    \eqref{eq:quadratic-example-A-mean}.
\end{proof}

\subsection{Proof of Lemma~\ref{lemma:terminal-mean-square-lower-bound}}
\label{section:proof-lemma:terminal-mean-square-lower-bound}

\begin{proof}
    Throughout this proof, the current iterates are fixed, and the expectation is taken only over the fresh noise $\xi_k$.
    The argument has three parts.
    We first prove a global lower bound on the squared conditional mean.
    We then obtain a $q_{\max}$-scale conditional variance on $\mathcal{E}_k$.
    Finally, the bias--variance decomposition combines these two estimates without conditioning on the future event $\mathcal{E}$.

    \textbf{(I) Contribution of conditional mean.}
    The update \eqref{eq:residual-learning-iterations}--\eqref{eq:quadratic-example-simplified-W-update}, expressed in terms of $R_k$, is
    \begin{align}
        \label{eq:pair-exact-one-step}
        R_{k+1}
        =&\ R_k
        + \underbrace{
            \sqrt{\frac{\mu}{2}}\,\beta\gamma q_{\min}P_k
            \begin{bmatrix}
                (\frac{q_{\min}}{q_{\max}})^{1/4}\\[2pt]
                1
            \end{bmatrix}
        }_{=:T_1}
        + \underbrace{
            \sqrt{\frac{\mu}{2}}\,\alpha\gamma
            A\bigl(-\nabla f(\bar W_k;\xi_k);P_k\bigr)
            \begin{bmatrix}
                (\frac{q_{\min}}{q_{\max}})^{1/4}\beta q_{\min}\\[2pt]
                1+\beta q_{\min}
            \end{bmatrix}
        }_{=:T_2}.
    \end{align}
    The definition \eqref{eq:pair-R-def}, Cauchy--Schwarz, and
    $q_{\min}\le q_{\max}$ imply
    \begin{align}
        \label{eq:pair-coordinate-control}
        \sqrt{\frac{\mu}{2}}\,|\bar W_k-W^\star|
        \le&\ \lVert R_k\rVert,
        \qquad
        \sqrt{\frac{\mu}{2}}\,\gamma|P_k|
        \le
        \sqrt{2}
        \left(\frac{q_{\max}}{q_{\min}}\right)^{1/4}
        \lVert R_k\rVert.
    \end{align}
    We first separately bound the two perturbation terms $T_1$ and $T_2$ in the one-step update. 
    Since $0<q_{\min}/q_{\max}\le1$, the coordinate bound and the exact choice of $\beta$ in \eqref{eq:alpha-beta-choice} give
    \begin{align}
        \label{eq:global-transfer-mean-bound}
        &\ 
        \|T_1\|
        = \sqrt{\frac{\mu}{2}}\,\beta\gamma q_{\min}|P_k|
        \sqrt{1+\sqrt{\frac{q_{\min}}{q_{\max}}}}
        \\
        \le&\
        2\beta q_{\min}
        \left(\frac{q_{\max}}{q_{\min}}\right)^{1/4}
        \lVert R_k\rVert
        =
        \frac{\alpha\gamma\mu q_{\max}}{8}
        \left(\frac{q_{\min}}{q_{\max}}\right)^{7/4}
        \lVert R_k\rVert
        \le
        \frac{\alpha\gamma\mu q_{\max}}{8}\lVert R_k\rVert.
        \nonumber
    \end{align}
    By the update rule \eqref{analog-update}, $-A(-g;P_k)=F(P_k)g+|g|G(P_k)$ for every scalar $g$.
    Since $F(P_k)+|G(P_k)|\le q_{\max}$,
    $|G(P_k)|\le L_S|P_k|$, and conditional Cauchy--Schwarz gives
    $\mathbb{E}_{\xi_k}[|\xi_k|]\le\sigma$, one has
    \begin{align}
        \label{eq:global-analog-mean-bound}
        \left|
            \mathbb{E}_{\xi_k}\bigl[
                A\bigl(-\nabla f(\bar W_k;\xi_k);P_k\bigr)
            \bigr]
        \right|
        \le&\
        F(P_k)\mu|\bar W_k-W^\star|
        +|G(P_k)|\mathbb{E}_{\xi_k}
        \bigl[|\mu(\bar W_k-W^\star)+\xi_k|\bigr]
        \\
        \le&\
        \mu q_{\max}|\bar W_k-W^\star|
        +L_S\sigma|P_k|.
        \nonumber
    \end{align}
    Combining \eqref{eq:global-analog-mean-bound} and \eqref{eq:pair-coordinate-control}, the norm of the conditional mean of the analog contribution is at most
    \begin{align}
        \label{eq:global-analog-vector-mean-bound}
        \|(T_2)\|
        = &\ \sqrt{\frac{\mu}{2}}\,\alpha\gamma
        \left|
            \mathbb{E}_{\xi_k}\bigl[
                A\bigl(-\nabla f(\bar W_k;\xi_k);P_k\bigr)
            \bigr]
        \right|
        \sqrt{
            \sqrt{\frac{q_{\min}}{q_{\max}}}\,\beta^2q_{\min}^2
            +(1+\beta q_{\min})^2
        }
        \\
        \le&\
        \frac{3\alpha}{2}
        \left(
            \mu\gamma q_{\max}
            +\sqrt{2}
            \left(\frac{q_{\max}}{q_{\min}}\right)^{1/4}
            L_S\sigma
        \right)\lVert R_k\rVert.
        \nonumber
    \end{align}
    We now combine the two perturbation bounds.
    Taking the conditional expectation in \eqref{eq:pair-exact-one-step}, the transfer vector remains deterministic.
    The norms of the transfer vector and the conditional mean of the analog vector are bounded by \eqref{eq:global-transfer-mean-bound} and \eqref{eq:global-analog-vector-mean-bound}, respectively.
    Therefore, the reverse triangle inequality gives
    \begin{align}
        \label{eq:global-pair-mean-lower}
        \left\lVert\mathbb{E}_{\xi_k}[R_{k+1}]\right\rVert
        \ge&\
        \lVert R_k\rVert
        -\frac{\alpha\gamma\mu q_{\max}}{8}\lVert R_k\rVert
        -\frac{3\alpha}{2}
        \left(
            \mu\gamma q_{\max}
            +\sqrt{2}
            \left(\frac{q_{\max}}{q_{\min}}\right)^{1/4}
            L_S\sigma
        \right)\lVert R_k\rVert
        \\
        \ge&\
        \left(
            1-2\alpha\left(
                2\mu\gamma q_{\max}
                +\sqrt{2}
                \left(\frac{q_{\max}}{q_{\min}}\right)^{1/4}
                L_S\sigma
            \right)
        \right)\lVert R_k\rVert.
        \nonumber
    \end{align}
    The lower bound on $\gamma$ in \eqref{eq:C-gamma-choice} and $\kappa_2\ge1$ imply $\sqrt{2}
        \left(\frac{q_{\max}}{q_{\min}}\right)^{1/4}
        L_S\sigma
        \le \mu\gamma q_{\max}$.
    Combining it and \eqref{eq:global-pair-mean-lower} gives $\left\lVert\mathbb{E}_{\xi_k}[R_{k+1}]\right\rVert
        \ge
        \left(1-6\alpha\mu\gamma q_{\max}\right)\lVert R_k\rVert$.
    The stepsize order $\alpha=\tdTheta\lp 1/K\rp$ ensures that $12\alpha\mu\gamma q_{\max}\le1$ for sufficiently large $K$ with fixed hardware parameters.
    Squaring it and using $(1-x)^2\ge1-2x$ yields the global estimate
    \begin{align}
        \label{eq:global-pair-mean-square-lower}
        \left\lVert\mathbb{E}_{\xi_k}[R_{k+1}]\right\rVert^2
        \ge
        \left(1-12\alpha\mu\gamma q_{\max}\right)\lVert R_k\rVert^2.
    \end{align}
    This bound is valid for every current iterate and does not depend on the confinement event.

    \textbf{(II) Contribution of variance.}
    We now suppose that $\mathcal{E}_k$ occurs.
    The ceiling placement \eqref{eq:symmetric-point-at-ceiling}, the $L_S$-Lipschitz property, and \eqref{eq:C-gamma-choice} then imply
    \begin{align}
        \label{eq:ceiling-plateau}
        q_\pm(P_k)
        \ge&\
        q_{\max}-L_S|P_k|
        \ge
        q_{\max}
        -\frac{2L_S\sigma}{\mu\gamma}
        \sqrt{\frac{q_{\min}}{q_{\max}}}
        \ge
        \frac{q_{\max}}{2}.
    \end{align}
    Fixing the current iterates, the map
    $x\mapsto -A\bigl(-(\nabla f(\bar W_k)+x);P_k\bigr)$ is continuous and piecewise linear, with every slope at least $q_{\max}/2$ by \eqref{eq:ceiling-plateau}. Hence, for all $x\le y$,
    \begin{align}
        \label{eq:increment-two-sided-lipschitz}
        &\ -A\bigl(-(\nabla f(\bar W_k)+y);P_k\bigr)
        +A\bigl(-(\nabla f(\bar W_k)+x);P_k\bigr)
        \ge
        \frac{q_{\max}}{2}(y-x).
    \end{align}
    Under \eqref{eq:skewed-noise}, $\xi_k$ takes the two values
    $\xi_k^+:=\sigma\sqrt{(1-p_k)/p_k}$ and
    $\xi_k^-:=-\sigma\sqrt{p_k/(1-p_k)}$. The two-point variance identity and \eqref{eq:increment-two-sided-lipschitz} give
    \begin{align}
        \label{eq:conditional-variance-sandwich}
        &\ \operatorname{Var}_{\xi_k}\bigl[
            A\bigl(-\nabla f(\bar W_k;\xi_k);P_k\bigr)
        \bigr]
        \\
        =&\
        p_k(1-p_k)
        \bigl(
            A\bigl(-(\nabla f(\bar W_k)+\xi_k^-);P_k\bigr)
            -A\bigl(-(\nabla f(\bar W_k)+\xi_k^+);P_k\bigr)
        \bigr)^2
        \nonumber
        \\
        \ge&\
        \frac{q_{\max}^2}{4}
        p_k(1-p_k)(\xi_k^+-\xi_k^-)^2
        =\frac{q_{\max}^2\sigma^2}{4}.
        \nonumber
    \end{align}
    Subtracting the conditional mean from \eqref{eq:pair-exact-one-step} and using \eqref{eq:conditional-variance-sandwich} therefore gives on $\mathcal{E}_k$:
    \begin{align}
        \label{eq:pair-fluctuation-2}
        &\ \mathbb{E}_{\xi_k}\bigl[
            \lVert R_{k+1}-\mathbb{E}_{\xi_k}[R_{k+1}]\rVert^2
        \bigr]
        \\
        =&\
        \frac{\mu}{2}\alpha^2\gamma^2
        \left(
            (1+\beta q_{\min})^2
            +\sqrt{\frac{q_{\min}}{q_{\max}}}\,\beta^2q_{\min}^2
        \right)
        \operatorname{Var}_{\xi_k}\bigl[
            A\bigl(-\nabla f(\bar W_k;\xi_k);P_k\bigr)
        \bigr]
        \ge
        \frac{\mu}{8}\alpha^2\gamma^2q_{\max}^2\sigma^2.
        \nonumber
    \end{align}
    Outside $\mathcal{E}_k$, the same conditional variance remains nonnegative. Thus, the indicator in \eqref{eq:terminal-mean-square-lower-bound} records exactly where the lower bound \eqref{eq:pair-fluctuation-2} is used.

    \textbf{(III) Combination.}
    The bias--variance decomposition combines the global conditional-mean estimate \eqref{eq:global-pair-mean-square-lower} with the localized variance estimate \eqref{eq:pair-fluctuation-2}. Consequently,
    \begin{align}
        \label{eq:pair-cond-second-moment}
        \mathbb{E}_{\xi_k}\bigl[\lVert R_{k+1}\rVert^2\bigr]
        =&\
        \left\lVert\mathbb{E}_{\xi_k}[R_{k+1}]\right\rVert^2
        +\mathbb{E}_{\xi_k}\bigl[
            \lVert R_{k+1}-\mathbb{E}_{\xi_k}[R_{k+1}]\rVert^2
        \bigr]
        \\
        \ge&\
        \left(1-12\alpha\mu\gamma q_{\max}\right)\lVert R_k\rVert^2
        +\frac{\mu}{8}\alpha^2\gamma^2q_{\max}^2\sigma^2
        \mathbf{1}_{\mathcal{E}_k}.
        \nonumber
    \end{align}
    This is \eqref{eq:terminal-mean-square-lower-bound}.
\end{proof}

\subsection{Proof of Lemma~\ref{lemma:confinement}}
\label{section:proof-lemma:confinement}

\begin{proof}
    The key point is that the one-step drift \eqref{inequality:TT-convergence-Lya-5} was established for the general strongly convex objective under Assumption~\ref{assumption:noise} and never invokes the sign condition \eqref{eq:conditional-absolute-value-sign-condition}.
    It therefore holds conditionally on every current iterate of the hard instance, whether or not the iterate is confined.
    We use this globally valid drift to construct a nonnegative supermartingale from $V_k$.
    Doob's maximal inequality then controls the probability of $\mathcal{E}^c$, which is precisely what is needed to activate the localized variance term in \eqref{eq:terminal-mean-square-lower-bound}.

    The initialization \eqref{eq:initial-lyapunov-choice} is chosen so that, using $\bar W_0=W_0$ and $P^*(W_0)-W^\diamond=(W^\star-W_0)/\gamma$,
    \begin{align}
        \label{eq:confinement-initial-V}
        V_0
        =&\
        f(\bar W_0)-f^*
        +\frac{C\gamma^2}{2}\lVert P^*(W_0)-W^\diamond\rVert^2
        =
        \frac{\sigma^2}{16\mu}
        \left(\frac{q_{\min}}{q_{\max}}\right)^{3/2}.
    \end{align}
    In particular, $V_0$ is independent of both $\alpha$ and $K$, so the stepsize definition in \eqref{eq:alpha-beta-choice} is non-circular.
    Dropping the contraction term in \eqref{inequality:TT-convergence-Lya-5} gives
    \begin{align}
        \label{eq:confinement-one-step-compensated}
        \mathbb{E}_{\xi_k}[V_{k+1}]
        \le
        V_k
        +
        2\alpha^2\gamma^2 Lq_{\max}^2\sigma^2 .
    \end{align}
    For the finite horizon $K$, define
    \begin{align}
        \label{eq:confinement-compensated-supermartingale}
        \tilde V_k
        :=
        V_k
        +
        2\alpha^2\gamma^2 Lq_{\max}^2\sigma^2(K-k),
        \qquad
        0\le k\le K,
    \end{align}
    This process is a supermartingale with respect to the natural history of the noise sequence because, conditionally on the current iterates,
    \begin{align}
        \label{eq:confinement-compensated-supermartingale-step}
        \mathbb{E}_{\xi_k}[\tilde V_{k+1}]
        =&\
        \mathbb{E}_{\xi_k}[V_{k+1}]
        +
        2\alpha^2\gamma^2 Lq_{\max}^2\sigma^2(K-k-1)
        \le
        V_k
        +
        2\alpha^2\gamma^2 Lq_{\max}^2\sigma^2(K-k)
        =
        \tilde V_k.
    \end{align}
    Using the stepsize order $\alpha=\tdTheta\lp q_{\max}/(\gamma\mu q_{\min}^2K)\rp$ and $L=\mu$ for the hard instance, we obtain after normalizing by the confinement threshold
    \begin{align}
        \label{eq:confinement-compensated-initial-bound}
        \frac{\mathbb{E}[\tilde V_0]}
        {\frac{\sigma^2}{2\mu}\left(\frac{q_{\min}}{q_{\max}}\right)^{3/2}}
        =&\
        \frac{1}{8}
        +4\alpha^2\gamma^2\mu^2q_{\max}^2K
        \left(\frac{q_{\max}}{q_{\min}}\right)^{3/2}
        \\
        =&\
        \frac{1}{8}
        +\tdO\!\lp\frac{1}{K}\rp
        \le \frac{1}{2}
        \qquad\text{for all sufficiently large $K$}.
        \nonumber
    \end{align}
    Here the $\tdO$ term hides logarithmic factors in $K$ and a polynomial factor in the fixed hardware condition number $\kappa_2=q_{\max}/q_{\min}$, together with constants depending on the remaining fixed problem parameters.
    Since $C=\mu$ by \eqref{eq:C-gamma-choice} and $P^*(W_k)-W^\diamond=(W^\star-W_k)/\gamma$, the Lyapunov function dominates \emph{both} coordinates, and in particular it satisfies the following weaker response-ratio bound:
    \begin{align}
        \label{eq:confinement-lyapunov-dominates-coordinates}
        \tilde V_k \ge V_k
        \ge \frac{\mu}{2}\min\!\left\{1,\sqrt{\frac{q_{\min}}{q_{\max}}}\right\}
        \max\bigl\{(\bar W_k-W^\star)^2,(W_k-W^\star)^2\bigr\}
        \qquad\text{for all }0\le k\le K,
    \end{align}
    The first inequality follows from $V_k\ge f(\bar W_k)-f^*=\tfrac{\mu}{2}(\bar W_k-W^\star)^2$ and $V_k\ge \tfrac{C\gamma^2}{2}\lVert P^*(W_k)-W^\diamond\rVert^2=\tfrac{C}{2}(W_k-W^\star)^2$.
    Hence, the nonnegative-supermartingale form of Doob's maximal inequality, applied to the process in \eqref{eq:confinement-compensated-supermartingale}, gives
    \begin{align}
        \label{eq:maximal-confinement}
        \mathbb{P}(\mathcal{E}^c)
        =&\
        \mathbb{P}\left(
            \bigcup_{k=0}^{K}\mathcal{E}_k^c
        \right)
        \\
        \le&\
        \mathbb{P}\left(
        \sup_{0\le k\le K}V_k
        >
        \frac{\sigma^2}{2\mu}\left(\frac{q_{\min}}{q_{\max}}\right)^{3/2}
        \right)
        \nonumber
        \\
        \le&\
        \mathbb{P}\left(
        \sup_{0\le k\le K}
        \tilde V_k
        >
        \frac{\sigma^2}{2\mu}\left(\frac{q_{\min}}{q_{\max}}\right)^{3/2}
        \right)
        \le
        \frac{\mathbb{E}[\tilde V_0]}
        {\frac{\sigma^2}{2\mu}\left(\frac{q_{\min}}{q_{\max}}\right)^{3/2}}
        \le
        \frac{1}{2}.
        \nonumber
    \end{align}
    The last inequality is \eqref{eq:confinement-compensated-initial-bound}.
    Therefore, $\mathbb{P}(\mathcal{E})\ge\frac12$ for sufficiently large $K$, proving Lemma \ref{lemma:confinement}.
\end{proof}

\end{document}